%

\documentclass[aop,MSNbibl,dvips]{arximspdf}
\usepackage{mathrsfs}

%

\doi{10.1214/12-AOP795} 
\volume{41}
\issue{4}
\pubyear{2013}
\firstpage{2513}
\lastpage{2543}

\makeatletter

\newcommand{\rright}{\right}
\newcommand{\lleft}{\left}

\newtheorem{theorem}{Theorem}
\newtheorem{corollary}{Corollary}
\newtheorem{lemma}{Lemma}
\newtheorem{proposition}{Proposition}

\newproclaim{remark}{Remark}

\newcommand{\Tr}{\operatorname{Tr}}
\newcommand{\Det}{\operatorname{Det}}

\makeatother

\begin{document}
\begin{frontmatter}

\title{Determinantal point processes with $J$-Hermitian
correlation kernels}
\runtitle{Determinantal point processes}

\begin{aug}
\author[A]{\fnms{Eugene} \snm{Lytvynov}\corref{}\thanksref{t1}\ead[label=e1]{e.lytvynov@swansea.ac.uk}\ead[url,label=u1]{http://www-maths.swan.ac.uk/staff/ewl/}}
\runauthor{E. Lytvynov}
\affiliation{Swansea University}
\address[A]{Department of Mathematics\\
Swansea University\\
Singleton Park\\
Swansea SA2 8PP\\
United Kingdom\\
\printead{e1}\\
\printead{u1}} 
\end{aug}

\thankstext{t1}{Supported in part by the SFB 701 ``Spectral
structures and topological methods in mathematics,'' Bielefeld
University and by International Joint Project Grant 2008/R2
of the Royal Society.}

\received{\smonth{4} \syear{2011}}
\revised{\smonth{8} \syear{2012}}

%
\begin{abstract}
Let $X$ be a locally compact Polish space and let $m$ be a reference
Radon measure on $X$. Let $\Gamma_X$ denote the configuration space
over $X$, that is, the space of all locally finite subsets of $X$. A
point process on $X$ is a probability measure on $\Gamma_X$. A point
process $\mu$ is called determinantal if its correlation functions have
the form $k^{(n)}(x_1,\ldots,x_n)=\det[K(x_i,x_j)]_{i,j=1,\ldots,n} $.
The function $K(x,y)$ is called the correlation kernel of the
determinantal point process $\mu$. Assume that the space $X$ is split
into two parts: $X=X_1\sqcup X_2$. A kernel $K(x,y)$ is called
$J$-Hermitian if it is Hermitian on $X_1\times X_1$ and $X_2\times
X_2$, and $K(x,y)=-\overline{K(y,x)}$ for $x\in X_1$ and $y\in X_2$. We
derive a necessary and sufficient condition of existence of a
determinantal point process with a $J$-Hermitian correlation kernel
$K(x,y)$.
\end{abstract}

%
\begin{keyword}[class=AMS]
\kwd[Primary ]{60K35}
\kwd{47B50}
\kwd{47G10}
\kwd[; secondary ]{45B05}
\kwd{46C20}
\end{keyword}
\begin{keyword}
\kwd{Determinantal point process}
\kwd{Fredholm determinant}
\kwd{$J$-self-adjoint operator}
\end{keyword}

\end{frontmatter}

\section{Introduction and preliminaries}\label{vhytd}
\subsection{Macchi--Soshnikov theorem}\label{kgfyufr}

Let $X$ be a locally compact Polish space, let $\mathcal B(X)$ be the
Borel $\sigma$-algebra on $X$, and let $\mathcal B_0(X)$ denote the
collection of all sets from $\mathcal B(X)$ which are pre-compact. The
configuration space over $X$ is defined as the set of all locally
finite subsets of $X$:
\[
\Gamma:=\Gamma_X:=\bigl\{\gamma\subset X\mid\mbox{for all $\Delta
\in\mathcal B_0(X)$ $|\gamma\cap\Delta|<\infty$}\bigr\}.
\]
Here, for a set $\Lambda$, $|\Lambda|$ denotes its capacity.
Elements $\gamma\in\Gamma$ are called configurations. The space
$\Gamma$ can be endowed with the vague topology, that is, the weakest
topology on $\Gamma$ with respect to which all maps $\Gamma\ni\gamma
\mapsto\sum_{x\in\gamma} f(x)$, $f\in C_0(X)$, are continuous. Here
$C_0(X)$ is the space of all continuous real-valued functions on $X$
with compact support. The configuration space $\Gamma$ equipped with
the vague topology is a Polish space.
We will denote by $\mathcal B(\Gamma)$ the Borel $\sigma$-algebra on
$\Gamma$.
A probability measure $\mu$ on $(\Gamma,\mathcal B(\Gamma))$ is
called a point process on $X$. For more detail, see, for example, \cite
{DVJ,KK,Lenard,KMM}.

A point process $\mu$ can be described with the help of correlation
functions, if they exist. Let $m$ be a reference Radon measure on
$(X,\mathcal B(X))$. The $n$th correlation function of $\mu$ ($n\in
\mathbb N$) is an $m^{\otimes n}$-a.e. nonnegative measurable
symmetric function $k_\mu^{(n)}(x_1,\ldots,x_n)$ on $X^n$ such that,
for any measurable symmetric function $f^{(n)}\dvtx X^n\to[0,\infty]$,
%
\begin{eqnarray}\label{cdtrs}
&&\int_\Gamma\sum_{\{x_1,\ldots,x_n\}\subset\gamma}f^{(n)}(x_1,\ldots,x_n) \mu(d\gamma)
\nonumber\\[-8pt]\\[-8pt]
&&\qquad=\frac1{n!}\int_{X^n}f^{(n)}(x_1,\ldots,x_n)k_\mu^{(n)}(x_1,\ldots,x_n) m(dx_1)\cdots m(dx_n).
\nonumber
\end{eqnarray}
Under a mild condition on the growth of correlation functions as $n\to
\infty$, they determine a point process uniquely~\cite{Lenard}.

A point\vspace*{1pt} process $\mu$ is called determinantal if there exists a
function $K(x,y)$ on~$X^2$, called the \textit{correlation kernel}, such that
%
\begin{equation}
\label{bgygfyuf} k_\mu^{(n)}(x_1,\ldots,x_n)=\det\bigl[K(x_i,x_j)
\bigr]_{i,j=1}^n,\qquad n\in\mathbb N;
\end{equation}
see, for example,~\cite{Soshnikov}. The integral operator $K$ in
$L^2(X,m)$ which has integral kernel $K(x,y)$ is called the
\textit{correlation operator of} $\mu$.

Assume that the correlation operator $K$ is self-adjoint and bounded on
the (real or complex) Hilbert space $L^2(X,m)$. In particular, the
integral kernel $K(x,y)$ is Hermitian (symmetric in the real case). If
the correlation functions $(k_\mu^{(n)})_{n\in\mathbb N}$ in (\ref
{bgygfyuf}) are \textit{pointwisely} nonnegative, then $K(x,y)$ is a
positive definite kernel. Hence, if additionally the function $K(x,y)$
is continuous (it being possible to weaken the latter condition), then
the operator $K$ must be nonnegative ($K\ge0$).

A bounded linear operator $K$ on $L^2(X,m)$ is called a locally
trace-class operator if, for each $\Delta\in\mathcal B_0(X)$, the
operator $K^\Delta:=P^\Delta KP^\Delta$ is trace-class. Here
$P^\Delta$ denotes the operator of multiplication by $\chi_\Delta$,
the indicator function of the set $\Delta$. [Thus, $P^\Delta$ is the
orthogonal projection of $L^2(X,m)$ onto $L^2(\Delta,m)$.] If the
operator $K$ is self-adjoint and nonnegative, then we can and will
choose its integral kernel, $K(x,y)$, so that
\[
\Tr K^\Delta=\int_\Delta K(x,x) m(dx) \qquad\mbox{for each }
\Delta\in\mathcal B_0(X);
\]
see~\cite{Soshnikov} and~\cite{GY}. By (\ref{cdtrs}) and (\ref
{bgygfyuf}), for each $\Delta\in\mathcal B_0(X)$,
\[
\int_\Gamma|\gamma\cap\Delta| \mu(d\gamma)=\int
_\Delta K(x,x) m(dx).
\]
Hence, in order that the correlation functions of $\mu$ be finite, we must
indeed assume that the operator $K$ is locally trace-class.

The following theorem, which is due to Macchi~\cite{Macchi} and
Soshnikov~\cite{Soshnikov}, plays a fundamental role in the theory of
point processes.

\begin{theorem}[(Macchi--Soshnikov)] Let $K$ be a self-adjoint,
nonnegative, locally trace-class, bounded linear operator on
$L^2(X,m)$. Then the integral kernel $K(x,y)$ of the operator $K$ is
the correlation kernel of a determinantal point process if and only if
$0\le K\le1$.
\end{theorem}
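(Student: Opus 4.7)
The plan is to reduce both implications to Fredholm-determinant identities on precompact sets and glue the local pieces together by a Kolmogorov-type extension. For the necessity direction, assume $\mu$ exists with correlation kernel $K(x,y)$, fix $\Delta\in\mathcal{B}_0(X)$, and pick a measurable $\varphi\colon X\to[0,1]$ supported in $\Delta$. Expanding $\prod_{x\in\gamma}(1-\varphi(x))$ by inclusion--exclusion, integrating against $\mu$ via \eqref{cdtrs}--\eqref{bgygfyuf}, and using the local trace-class hypothesis together with Hadamard's inequality to justify term-by-term integration, one obtains the generating-functional identity
$$\int_\Gamma\prod_{x\in\gamma}\bigl(1-\varphi(x)\bigr)\,\mu(d\gamma)=\Det\bigl(1-\varphi^{1/2}K\varphi^{1/2}\bigr).$$
Since the left-hand side is the $\mu$-expectation of a $[0,1]$-valued functional, this Fredholm determinant lies in $[0,1]$ for every admissible $\varphi$. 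Exploiting this family of inequalities together with the non-negativity of all Janossy densities of $\mu$ restricted to $\Delta$ forces the spectrum of $K^\Delta$ into $[0,1]$; letting $\Delta$ exhaust $X$ yields $0\le K\le 1$.

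For the sufficiency direction, I would build $\mu$ from local marginals. Fix $\Delta\in\mathcal{B}_0(X)$ and diagonalize the self-adjoint trace-class operator $K^\Delta$ on $L^2(\Delta,m)$: $K^\Delta=\sum_i\lambda_i\,\phi_i\otimes\overline{\phi_i}$ with $\lambda_i\in[0,1]$ and $\{\phi_i\}$ orthonormal. Introduce independent Bernoulli selectors $B_i$ with $\mathbb{P}(B_i=1)=\lambda_i$; since $\sum_i\lambda_i=\Tr K^\Delta<\infty$, Borel--Cantelli ensures that the random index set $\mathcal{I}:=\{i:B_i=1\}$ is almost surely finite. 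Conditionally on $\mathcal{I}$, sample a configuration of $|\mathcal{I}|$ points in $\Delta$ from the symmetric probability density
$$p_\mathcal{I}(x_1,\dots,x_{|\mathcal{I}|})=\frac{1}{|\mathcal{I}|!}\,\bigl|\det[\phi_i(x_r)]_{i\in\mathcal{I},\,r=1,\dots,|\mathcal{I}|}\bigr|^2,$$
which integrates to one by orthonormality. Let $\mu_\Delta$ denote the resulting mixture on $\Gamma_\Delta$. A direct calculation of the factorial moment measures of $\mu_\Delta$, combined with the Cauchy--Binet identity
$$\sum_{\mathcal{J}\subset\N,\,|\mathcal{J}|=n}\Bigl(\prod_{i\in\mathcal{J}}\lambda_i\Bigr)\bigl|\det[\phi_i(x_r)]_{i\in\mathcal{J},\,r=1,\dots,n}\bigr|^2=\det\bigl[K^\Delta(x_r,x_s)\bigr]_{r,s=1}^n,$$
shows that the $n$-th correlation function of $\mu_\Delta$ is exactly $\det[K^\Delta(x_i,x_j)]_{i,j=1}^n$.

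Finally, I would verify that $\{\mu_\Delta\}_{\Delta\in\mathcal{B}_0(X)}$ is a consistent family: for $\Delta\subset\Delta'$, the restriction map $\gamma\mapsto\gamma\cap\Delta$ pushes $\mu_{\Delta'}$ to a point process on $\Gamma_\Delta$ whose correlation functions are still $\det[K^\Delta(x_i,x_j)]$, and since correlation functions uniquely determine a bounded point process, the pushforward coincides with $\mu_\Delta$. A standard Kolmogorov-type theorem on configuration spaces (see, e.g., \cite{KMM}) then yields a unique $\mu$ on $\Gamma_X$ with these local marginals, and its correlation functions equal $\det[K(x_i,x_j)]$ by construction. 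The main obstacle I anticipate is the identification in the sufficiency step of the correlation functions of the Bernoulli mixture with $\det[K^\Delta(x_i,x_j)]$: this is a combinatorial computation requiring conditioning on $(B_i)$ together with a careful Cauchy--Binet expansion, and the case of eigenvalues $\lambda_i=1$ (which force $B_i=1$ deterministically) must be tracked throughout even though it introduces no genuine difficulty.
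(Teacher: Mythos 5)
The paper does not prove the Macchi--Soshnikov theorem: it is quoted from \cite{Macchi} and \cite{Soshnikov}, and the closest thing to a proof here is the $J$-Hermitian generalization (Theorems~\ref{jkfds5t} and~\ref{hts}), which reduces to this statement when $X_2=\varnothing$. Your argument is correct in outline but takes a genuinely different route on the existence half. The paper's strategy (following Macchi, Shirai--Takahashi and Soshnikov) is analytic: for $\|K\|<1$ one writes down explicit local densities $d^{(n)}_\mu[\Delta]=\Det(1-K^\Delta)\det[L[\Delta](x_i,x_j)]$ with $L[\Delta]=K^\Delta(1-K^\Delta)^{-1}$, checks non-negativity of these determinants, matches Bogoliubov functionals across compact sets to get consistency, and then treats $\|K\|=1$ by a separate limiting ($\star$-positive-definiteness) argument. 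You instead use the Bernoulli-mixture (Hough--Krishnapur--Peres--Vir\'ag) construction: diagonalize $K^\Delta$, thin the eigenfunctions independently with probabilities $\lambda_i$, and sample from the projection determinantal density. This buys a single construction valid uniformly for $0\le K\le1$, with no inversion of $1-K^\Delta$ and no special treatment of $\|K\|=1$, at the price of the Cauchy--Binet bookkeeping you already flag; the paper's route buys explicit Janossy densities and, crucially for its purposes, survives in the $J$-self-adjoint setting where no spectral decomposition of $K^\Delta$ is available. Your necessity direction is essentially the argument the paper itself uses in part~10 of the proof of Theorem~\ref{hts}.

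One step in the necessity direction should be made explicit, because as written it is only gestured at: knowing that $\Det(1-\varphi^{1/2}K\varphi^{1/2})\in[0,1]$ for all admissible $\varphi$ does \emph{not} by itself force $\sigma(K^\Delta)\subset[0,1]$ (two eigenvalues above $1$ can leave $\prod_i(1-\lambda_i)$ positive). What closes the argument is strict positivity: for $\varphi=s\chi_\Delta$ with $s\in(0,1)$ the left-hand side equals $\int_\Gamma(1-s)^{|\gamma\cap\Delta|}\,\mu(d\gamma)$, which is strictly positive because $|\gamma\cap\Delta|<\infty$ for every configuration $\gamma$, whereas $\Det(1-sK^\Delta)=\prod_i(1-s\lambda_i)$ vanishes at $s=1/\lambda_{\max}<1$ if $\lambda_{\max}>1$. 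This is Soshnikov's Remark~4, and it is exactly the device the paper deploys in part~10 of the proof of Theorem~\ref{hts}. With that observation inserted, your proof is complete.
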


Note that, in the above theorem, the condition of boundedness of the
operator $K$ is not essential. One may instead initially assume that
$K$ is a Hermitian, nonnegative, locally trace-class operator which is
defined on a proper domain in $L^2(X,m)$.

Determinantal point processes with Hermitian correlation kernels occur
in various fields of mathematics and physics; see, for example, the
review paper~\cite{Soshnikov} and Chapter 4 in~\cite{AGZ}.

\subsection{Complementation principle (particle-hole duality)}

Assume that the underlying space $X$ is split into two disjoint parts:
$X=X_1 \sqcup X_2$.
Hence, we get $L^2(X,m)=L^2(X_1,m)\oplus L^2(X_2,m)$. For $i=1,2$, let
$P_i$ denote the orthogonal projection of $L^2(X,m)$ onto $L^2(X_i,m)$.
Let us define a bounded linear operator $J$ on $L^2(X,m)$ by $J:=P_1-P_2$.
Following, for example,~\cite{indefinite}, we define an (indefinite)
$J$-scalar product on $L^2(X,m)$ by
\[
[f,g]:=(Jf,g)=(P_1f,P_1g)-(P_2f,P_2g),\qquad
f,g\in L^2(X,m).
\]
Here $(\cdot,\cdot)$ denotes the usual scalar product in $L^2(X,m)$.
A bounded linear operator $K$ on $L^2(X,m)$ is called $J$-self-adjoint
if $[Kf,g]=[f,Kg]$ for all $f,g\in L^2(X,m)$.
An integral kernel $K(x,y)$ of a $J$-self-adjoint, integral operator
$K$ is called $J$-Hermitian. More precisely, $K(x,y)$ is $J$-Hermitian
if $K(x,y)=\overline{K(y,x)}$ if $x,y\in X_1$ or $x,y\in X_2$, and
$K(x,y)=-\overline{K(y,x)}$ if $x\in X_1$, $y\in X_2$.

For a bounded linear operator $K$ on $L^2(X,m)$, we denote
%
\begin{equation}
\label{cydyr} \widehat K:=KP_1+(1-K)P_2.
\end{equation}
As is easily seen, $K$ is $J$-self-adjoint if and only if $\widehat K$ is
self-adjoint.

Assume now that the underlying space $X$ is discrete, that is, $X$ is a
countable set, and as a topological space $X$ it totally disconnected.
Thus, a configuration $\gamma$ in $X$ is an arbitrary subset of $X$.
Let $m$ be the counting measure on $X$: $m(\{x\})=1$ for each $x\in X$.
Any linear operator $K$ in $L^2(X,m)$ may be identified with its matrix
$[K(x,y)]_{x,y\in X}$ [$K(x,y)$ being the integral kernel of $K$ in
this case].

Let $\mu$ be a point process on $X$. By (\ref{cdtrs}),
\[
k_\mu^{(n)}(x_1,\ldots,x_n)=\mu
\bigl(\gamma\in\Gamma\dvtx  \{x_1,\ldots,x_n\} \subset\gamma
\bigr)\vadjust{\goodbreak}
\]
for distinct points $x_1,\ldots,x_n\in X$, otherwise $k_\mu
^{(n)}(x_1,\ldots,x_n)=0$. In particular, the correlation functions
uniquely identify the corresponding point process.

Following~\cite{BOO}, we will now present a complementation principle
(a particle-hole duality) for determinantal point processes.
(This observation is referred by the authors of~\cite{BOO} to a
private communication by S. Kerov.) Assume, as above, that the
underlying space $X$ is divided into two disjoint parts: $X=X_1 \sqcup X_2$.
Consider the mapping $I\dvtx \Gamma\to\Gamma$ defined by
\[
I\gamma:=\widehat\gamma:=(\gamma\cap X_1)\cup(X_2
\setminus\gamma).
\]
Thus, on the $X_1$ part of the space, the configuration $\widehat
\gamma$ coincides with $\gamma$, while on the $X_2$ part the
configuration $\widehat\gamma$ consists of all points from $X_2$
which do not belong to $\gamma$ (holes). The mapping $I$ is clearly an
involution, that is, $I^2$ is the identity mapping. For a point process
$\mu$ on $X$, we denote by $\widehat\mu$ the push-forward of $\mu$
under $I$.

\begin{proposition}[(\cite{BOO})]\label{nfutdeseas} Let $\mu$ be an
arbitrary determinantal point process on a discrete space $X=X_1\sqcup
X_2$, with a correlation kernel
$K(x,y)$. Then $\widehat\mu$ is the determinantal point process on
$X$ with the correlation kernel $\widehat K(x,y)$, the integral kernel
of the operator $\widehat K$ defined by (\ref{cydyr}).
\end{proposition}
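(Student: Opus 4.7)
My plan is to verify directly that the correlation functions of $\widehat{\mu}$ at any finite collection of distinct points agree with the determinant of the matrix built from $\widehat{K}$. In the discrete setting the correlation functions at distinct points are probabilities of inclusion, so the computation reduces to a combinatorial identity between an inclusion-exclusion sum of principal minors of $K$ and a full determinant of $\widehat{K}$.

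First I would fix distinct points $y_1,\dots,y_n\in X$ and partition them as $\{a_1,\dots,a_p\}=\{y_i\}\cap X_1$ and $\{b_1,\dots,b_q\}=\{y_i\}\cap X_2$ (so $p+q=n$). Since $\widehat{\gamma}\cap X_1=\gamma\cap X_1$ and $\widehat{\gamma}\cap X_2=X_2\setminus\gamma$, we have
\[
k_{\widehat{\mu}}^{(n)}(y_1,\dots,y_n)=\mu\bigl(\gamma:\,a_i\in\gamma\text{ for all }i,\ b_j\notin\gamma\text{ for all }j\bigr).
\]
Applying inclusion-exclusion on the ``hole'' conditions at the points $b_j$, this rewrites as
\[
\sum_{S\subset\{b_1,\dots,b_q\}}(-1)^{|S|}\,\mu\bigl(\{a_1,\dots,a_p\}\cup S\subset\gamma\bigr)=\sum_{S}(-1)^{|S|}\,k_{\mu}^{(p+|S|)}\bigl(a_1,\dots,a_p,S\bigr),
\]
and each term is a principal minor of $K$ thanks to the determinantal property of $\mu$.

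Next I would expand $\det[\widehat{K}(y_i,y_j)]_{i,j=1}^n$ using multilinearity. For indices $j$ with $y_j\in X_1$ the $j$-th column of the matrix has entries $K(y_i,y_j)$, whereas for $y_j\in X_2$ it has entries $\delta_{ij}-K(y_i,y_j)$, which splits as a standard-basis-vector column minus a $K(\cdot,y_j)$ column. Distributing over the $q$ columns associated to $X_2$ produces $2^q$ terms indexed by the subset $S$ of $\{b_1,\dots,b_q\}$ on which we keep the $-K$ column, with overall sign $(-1)^{|S|}$. In each such term the columns indexed by $\{b_1,\dots,b_q\}\setminus S$ are standard basis vectors, so only the permutations that fix those positions survive, and the resulting determinant collapses to the principal minor of $K$ on the index set $\{a_1,\dots,a_p\}\cup S$. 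Matching this to the inclusion-exclusion expansion above gives $k_{\widehat{\mu}}^{(n)}(y_1,\dots,y_n)=\det[\widehat{K}(y_i,y_j)]_{i,j=1}^n$, and existence plus uniqueness of a point process is automatic because on a discrete space the finite-dimensional distributions of $\widehat{\mu}$ are already determined by the push-forward and are visibly nonnegative (they are probabilities).

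The main technical step is the combinatorial bookkeeping in the determinant expansion: one has to verify that deleting both the row and the column corresponding to each standard-basis column $b_j\notin S$ yields exactly the principal $K$-minor on $\{a_1,\dots,a_p\}\cup S$ with the correct overall sign of $+1$. This is the only place where something could go wrong, but since rows and columns are removed symmetrically, the Laplace expansion contributes no extra sign, so the identity holds term by term and the proposition follows.
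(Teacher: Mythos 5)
Your argument is correct: the inclusion--exclusion identity for the hole probabilities matches term by term with the column-multilinearity expansion of $\det[\widehat K(y_i,y_j)]$, the cofactor signs for the standard-basis columns are indeed $+1$ since row and column of the same index are removed, and on a discrete space the correlation functions at distinct points determine the measure. Note that the paper itself offers no proof of this proposition --- it is imported from \cite{BOO} --- so there is nothing to diverge from; your direct verification is essentially the standard argument behind the complementation principle and can stand as a self-contained proof.
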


Combining the Macchi--Soshnikov theorem with Proposition \ref
{nfutdeseas}, we immediately get the following:

\begin{proposition}\label{ydst} Let $X=X_1\sqcup X_2$ be a discrete
space and let $m$ be a counting measure on $X$. Let $K$ be a bounded
linear operator on $L^2(X,m)$ and let $K$ be $J$-self-adjoint.
Then $K(x,y)$ is the correlation kernel of a determinantal point
process on $X$ if and only if $0\le\widehat K\le1$.
\end{proposition}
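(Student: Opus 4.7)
The strategy is to combine Proposition~\ref{nfutdeseas} with the Macchi--Soshnikov theorem; the $J$-self-adjointness of $K$ enters exactly to ensure that the transformed operator $\widehat K = KP_1+(1-K)P_2$ is self-adjoint, which is the algebraic hypothesis needed to apply Macchi--Soshnikov on the other side of the complementation.

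For the ``only if'' direction, assume there is a determinantal point process $\mu$ on $X$ with correlation kernel $K(x,y)$. Proposition~\ref{nfutdeseas} yields a determinantal point process $\widehat\mu$ with correlation kernel $\widehat K(x,y)$, and the operator $\widehat K$ is self-adjoint by the remark following \eqref{cydyr}. On the discrete space $X$ local trace-classness is automatic (every $\Delta\in\mathcal B_0(X)$ is finite, so $P^\Delta \widehat K P^\Delta$ acts on a finite-dimensional subspace), and boundedness of $\widehat K$ is inherited from that of $K$ and the projections $P_1,P_2$. Non-negativity of $\widehat K$ also comes for free: since $\widehat\mu$ is an honest probability measure, the determinants $\det[\widehat K(x_i,x_j)]_{i,j=1}^n$ are non-negative, so every finite principal submatrix of $\widehat K$ is positive semi-definite, whence the self-adjoint operator $\widehat K$ is non-negative. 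The Macchi--Soshnikov theorem applied to $\widehat K$ and $\widehat\mu$ therefore forces $0\le\widehat K\le 1$.

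For the ``if'' direction, suppose $0\le\widehat K\le 1$. Then $\widehat K$ is bounded, self-adjoint, non-negative, and locally trace-class on the discrete space, so Macchi--Soshnikov produces a determinantal point process $\nu$ whose correlation kernel is $\widehat K(x,y)$. Applying Proposition~\ref{nfutdeseas} to $\nu$ gives a determinantal point process $\widehat\nu$ whose correlation kernel is the integral kernel of $\widehat{\widehat K}$. A direct computation using $P_1^2=P_1$, $P_2^2=P_2$ and $P_1P_2=P_2P_1=0$ yields
\begin{equation*}
\widehat{\widehat K} \;=\; \widehat K P_1 + (1-\widehat K)P_2 \;=\; KP_1 + KP_2 \;=\; K,
\end{equation*}
so $\widehat\nu$ realises $K(x,y)$ as a correlation kernel, which is what we wanted.

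The only real ``obstacle'' is bookkeeping: one must verify the involution identity $\widehat{\widehat K}=K$ and confirm that the auxiliary hypotheses of Macchi--Soshnikov (non-negativity, boundedness, local trace-classness) transfer between $K$ and $\widehat K$ essentially for free on a discrete base space. Everything of substance is already packaged into Proposition~\ref{nfutdeseas} and the Macchi--Soshnikov theorem, so the argument is really a short diagram chase, as suggested by the phrase ``immediately get the following'' preceding the statement.
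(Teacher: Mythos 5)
Your proof is correct and follows exactly the route the paper intends: the paper offers no separate argument for Proposition~2 beyond the remark that it follows ``immediately'' by combining the Macchi--Soshnikov theorem with the complementation principle of Proposition~1, and your write-up simply supplies the bookkeeping (the involution identity $\widehat{\widehat K\,}=K$, the automatic local trace-classness on a discrete space, and the non-negativity of $\widehat K$ from the non-negativity of its principal minors). Nothing further is needed.
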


\subsection{Formulation of the problem and the main result}

In the case of a discrete underlying space $X$, determinantal point
processes with $J$-Hermitian correlation kernels occurred in Borodin
and Olshanski's studies on harmonic analysis of both the infinite
symmetric group and the infinite-dimensional unitary group; see, for
example,~\cite{BO1,BO2,BO3,BO4,O} and the references therein. The
paper~\cite{BO3}, page 1332, also contains references to some earlier
works of mathematical physicists on solvable models of systems with
positive and negative charged particles. In these papers, one finds
further examples of determinantal point processes with $J$-Hermitian
correlation kernels.

Furthermore, in their studies, Borodin and Olshanski derived three
classes of determinantal point processes with $J$-Hermitian correlation
kernels in the case where the underlying space $X$ is \textit{continuous}:
the Whittaker kernel~\cite{BO2} (\mbox{$X=\mathbb R_-\sqcup\mathbb R_+$}),
its scaling limit---the matrix tail kernel~\cite{O}
($X=\mathbb R\sqcup\mathbb R$), and the continuous hypergeometric\vadjust{\goodbreak}
kernel~\cite{BO3} [$X=(-\frac12,\frac12)\sqcup\{x\in\mathbb R\dvtx
|x|>\frac12\}$].
It is important to note that, in all these examples, the self-adjoint
operator $\widehat K$ appears to be an orthogonal projection. This
follows from Proposition~5.1 in~\cite{BO4} and the respective results
of \mbox{\cite{BO2,O}} (see also~\cite{BO4}, Proposition~6.6) and
\cite{BO3}. (It should be, however, noted that, in the case of a
continuous hypergeometric kernel, the corresponding projection property
was proved only under an additional assumption; see the last two
paragraphs of Section~10 in~\cite{BO3}.)

The aim of the present paper is to \textit{derive}, \textit{in the case of a
general underlying space $X$}, \textit{a necessary and sufficient condition of
existence of a determinantal point process with a $J$-Hermitian
correlation kernel}. This problem was formulated to the author by
Grigori Olshanski. I am extremely grateful to him for this and for many
useful discussions and suggestions.

Our main result may be stated as follows. (We will omit a technical
detail related to the choice of an integral kernel of the operator
$K$.)\vspace*{9pt}

\textit{Main result.} {Assume that $K$ is a $J$-self-adjoint bounded
linear operator on $L^2(X,m)$.
Assume that the operators $P_1KP_1$ and $P_2KP_2$ are nonnegative.
Assume that, for any $\Delta_1, \Delta_2\in\mathcal B_0(X)$ such
that $\Delta_1\subset X_1$ and $\Delta_2\subset X_2$,
the operators $ K^{\Delta_i}$ ($i=1,2$) are trace-class,
while $P^{\Delta_2}KP^{\Delta_1}$ is a Hilbert--Schmidt operator.
Then the integral kernel $K(x,y)$ of the operator $K$ is the
correlation kernel of a determinantal point process if and only if
$0\le\widehat K\le1$.}\vspace*{9pt}

Let us make two remarks regarding the conditions of the main result.
First, we note that, if the correlation operator $K$ of $\mu$ is
$J$-self-adjoint, then the restrictions of the point process $\mu$ to
$X_1$ and $X_2$ are determinantal point processes on $X_1$ and $X_2$
with self-adjoint, correlation operators $P_1KP_1$ and $P_2KP_2$,
respectively. Therefore, we assume that the latter operators are nonnegative.

Second, choose any $\Delta\in\mathcal B_0(X)$ such that $m(\Delta
_i)>0$, where $\Delta_i:=\Delta\cap X_i$, $i=1,2$. Then, since the
operator $K$ is not self-adjoint, the assumption in the main result is
weaker than the requirement that the operator $K^\Delta$ be
trace-class. In fact, $K$ being locally trace-class seems to be a
rather unnatural assumption for $J$-self-adjoint operators.
This, of course, leads us to some additional difficulties in the proof.

Clearly, Proposition~\ref{ydst} is the special case of our main result
in the case where the underlying space $X$ is discrete.
The drastic difference between the discrete and the continuous cases is
that the mapping $\gamma\mapsto\widehat\gamma$ has no analog in the
case of a continuous space $X$. Furthermore, if the space $X$ is not
discrete, the self-adjoint operator $\widehat K$
is not even an integral operator, so it cannot be a correlation
operator of a determinantal point process.

To prove the main result, we follow the strategy of dealing with
determinantal point processes through the corresponding Fredholm
determinants (compare with~\cite{Macchi,ST,Soshnikov}), or rather the
extension of Fredholm determinant as proposed in~\cite{BOO}.\vadjust{\goodbreak}

Combining the main result and Proposition 5.1 in~\cite{BO4},
we also derive a method of constructing a big class of determinantal
point processes with $J$-self-adjoint correlation operators $K$ such
that the corresponding operators $\widehat K$ are orthogonal
projections. This class includes
the above mentioned examples of determinantal point processes obtained
by Borodin and Olshanski.\looseness=-1

The paper is organized as follows. In Section~\ref{lkhgigtit} we prove
a couple of results related to the mentioned extension of the Fredholm
determinant. In Section~\ref{jgytdyd} we prove a series of auxiliary
statements regarding $J$-self-adjoint operators and their extended
Fredholm determinants. Finally, in Section~\ref{tyded6} we formulate
and prove the main results of the paper.

\section{An extension of the Fredholm determinant}\label{lkhgigtit}

We first recall the classical definition of a Fredholm determinant;
see, for example,~\cite{Simon} for further detail.
Let $H$ be a complex, separable Hilbert space with scalar product
$(\cdot,\cdot)$ and norm $\|\cdot\|$.
We denote by $\mathscr L(H)$ the space of all bounded linear operators
on $H$.
An operator $A\in\mathscr L(H)$ is called a trace-class operator if $\|
A\|_1=\Tr(|A|)<\infty$, where $|A|=(A^*A)^{1/2}$. The set of all
trace-class operators in $H$ will be denoted by $\mathscr L_1(H)$. The
trace of an operator $A\in\mathscr L_1(H)$ is given by $\Tr(A)=\sum
_{n=1}^\infty(Ae_n,e_n) $, where $\{e_n\}_{n=1}^\infty$ is an
orthonormal basis of $H$. $\Tr(A)$ is independent of the choice of a
basis. Note also that
$|{\Tr(A)}|\le\Tr(|A|)$. For any $A\in\mathscr L_1(H)$ and $B\in
\mathscr L(H)$, we have $AB, BA\in\mathscr L_1(H)$ with
\[
\max\bigl\{\|AB\|_1,\|BA\|_1\bigr\}\le\|A\|_1 \|B
\|,
\]
where $\|B\|$ denotes the usual operator norm of $B$. In the latter
case, we have
%
\begin{equation}
\label{Trace}\Tr(AB)=\Tr(BA).
\end{equation}

Denote by $\wedge^n(H)$ the $n$th antisymmetric tensor power of the
Hilbert space $H$, which is a closed subspace of $H^{\otimes n}$, the
$n$th tensor power of $H$. For any $A\in\mathscr L(H)$, the operator
$A^{\otimes n}$ in $H^{\otimes n}$ acts invariantly on $\wedge^n(H)$
and we denote by $\wedge^n(A)$ the restriction of $A^{\otimes n}$ to
$\wedge^n(H)$. If $A\in\mathscr L_1(H)$, then
$\wedge^n(A)\in\mathscr L_1(\wedge^n(H))$ and
%
\begin{equation}
\label{bvffdy} \bigl\|\wedge^n(A)\bigr\|_1\le\frac1{n!} \|A
\|_1^n.
\end{equation}
The Fredholm determinant is then defined by
%
\begin{equation}
\label{jdst}\Det(1+A)=1+\sum_{n=1}^\infty\Tr
\bigl(\wedge^n(A)\bigr).
\end{equation}
The Fredholm determinant can be characterized as the unique function
which is continuous in $A$ with respect to the trace norm $\|A\|_1$ and
which coincides with the usual determinant when $A$ is a
finite-dimensional operator.

One can extend the Fredholm determinant to a wider class on operators.
Assume that we are given a splitting of $H$ into two subspaces:
%
\begin{equation}
\label{nbytd}H=H_1\oplus H_2.\vadjust{\goodbreak}
\end{equation}
According to this splitting, we write an operator $A\in\mathscr L(H)$
in block form,
%
\begin{equation}
\label{fydydyf} A=\lleft[ %
\matrix{ A_{11}& A_{21}
\cr
A_{12}&A_{22} } %
\rright],
\end{equation}
where $A_{ij}\dvtx H_j\to H_i$, $i,j=1,2$. We define the even and odd parts
of $A$ as follows:
\[
A_{\mathrm{even}}:=\lleft[ %
\matrix{ A_{11}& 0
\cr
0&A_{22} } %
\rright],\qquad A_{\mathrm{odd}}:=\lleft[ %
\matrix{ 0& A_{21}
\cr
A_{12}&0 } %
\rright].
\]
We denote by $\mathscr L_{1|2}(H)$ the set of all operators $A\in
\mathscr L(H)$ such that $ A_{\mathrm{even}}\in\mathscr L_1(H)$ and $
A_{\mathrm{odd}}\in\mathscr L_2(H)$. Here $\mathscr L_2(H)$ denotes
the space of all Hilbert--Schmidt operators on $H$, equipped with the norm
\[
\|A\|_2= \Biggl(\sum_{n=1}^\infty
\|Ae_n\|^2 \Biggr)^{1/2},
\]
where $\{e_n\}_{n=1}^\infty$ is an orthonormal basis of $H$. Since
$\mathscr L_1(H)\subset\mathscr L_2(H)$, one concludes that
\[
\mathscr L_1(H)\subset\mathscr L_{1|2}(H)\subset\mathscr
L_{2}(H).
\]
We endow $\mathscr L_{1|2}(H)$ with the topology induced by the trace
norm on the even part and by the Hilbert--Schmidt norm on the odd part.

\begin{proposition}[(\cite{BOO})]\label{yrsdts}
The function $A\mapsto\Det(1+A)$ admits a unique extension to
$\mathscr L_{1|2}(H)$ which is continuous in the topology of $\mathscr
L_{1|2}(H)$. This extension is given by the formula
%
\begin{equation}\label{kfde5b}
\Det(1+A)=\Det\bigl((1+A)e^{-A}\bigr)\cdot e^{\Tr(A_{\mathrm
{even}})}.
\end{equation}
\end{proposition}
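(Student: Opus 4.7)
My plan is to verify three things: that the right-hand side of \eqref{kfde5b} is well-defined and continuous on $\mathscr{L}_{1|2}(H)$, that it reduces to the ordinary Fredholm determinant when $A\in\mathscr{L}_1(H)$, and that $\mathscr{L}_1(H)$ is dense in $\mathscr{L}_{1|2}(H)$; together these give the uniqueness and the explicit form claimed in \eqref{kfde5b}.

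The key algebraic fact is the power-series rearrangement
$$(1+A)e^{-A}-1=\sum_{n=2}^\infty \frac{(-1)^{n-1}(n-1)}{n!}\,A^n,$$
in which the linear term cancels. I would check this first by multiplying out $(1+A)\sum_{n\ge 0}(-A)^n/n!$ and collecting coefficients. Once this is in hand, note that $\mathscr{L}_{1|2}(H)\subset\mathscr{L}_2(H)$, so each $A^n$ with $n\ge 2$ belongs to $\mathscr{L}_1(H)$ with $\|A^n\|_1\le\|A\|_2^n$, and hence the series converges absolutely in trace norm. Consequently $(1+A)e^{-A}-1\in\mathscr{L}_1(H)$, so $\Det((1+A)e^{-A})$ is defined by \eqref{jdst}, while $\Tr(A_{\mathrm{even}})$ is defined because $A_{\mathrm{even}}\in\mathscr{L}_1(H)$. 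Using the telescoping identity $A^n-B^n=\sum_{k=0}^{n-1}A^k(A-B)B^{n-1-k}$ together with the same trace/Hilbert--Schmidt bounds, the map $A\mapsto (1+A)e^{-A}-1$ is continuous from $\mathscr{L}_2(H)$ to $\mathscr{L}_1(H)$. Composing with the trace-norm continuity of $\Det(1+\cdot)$ on $1+\mathscr{L}_1(H)$, and with the continuity of $A\mapsto\Tr(A_{\mathrm{even}})$ on $\mathscr{L}_{1|2}(H)$ (built into the definition of the topology), one obtains continuity of the whole right-hand side of \eqref{kfde5b}.

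To check consistency on $\mathscr{L}_1(H)$, I would invoke the classical identity $\Det(1+A)=\Det((1+A)e^{-A})\cdot e^{\Tr A}$ for $A\in\mathscr{L}_1(H)$, which is a standard consequence of multiplicativity of the Fredholm determinant together with $\Det(e^{-A})=e^{-\Tr A}$. Since $A_{\mathrm{odd}}\in\mathscr{L}_1(H)$ has vanishing diagonal entries in any orthonormal basis adapted to the decomposition $H=H_1\oplus H_2$, one has $\Tr(A_{\mathrm{odd}})=0$, hence $\Tr A=\Tr(A_{\mathrm{even}})$, and \eqref{kfde5b} reproduces the usual Fredholm determinant.

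Finally, to see that $\mathscr{L}_1(H)$ is dense in $\mathscr{L}_{1|2}(H)$, I approximate the off-diagonal part $A_{\mathrm{odd}}\in\mathscr{L}_2(H)$ by finite-rank off-diagonal operators $B_k$ in Hilbert--Schmidt norm; then $A_{\mathrm{even}}+B_k\in\mathscr{L}_1(H)$ and converges to $A$ in the topology of $\mathscr{L}_{1|2}(H)$. Uniqueness of the continuous extension follows. I expect the main obstacle to be the algebraic cancellation in the key identity above: this is the step that actually makes the construction work, since it is what upgrades the ``only Hilbert--Schmidt'' input $A_{\mathrm{odd}}$ into a trace-class contribution to $(1+A)e^{-A}-1$; after that, the remainder of the argument is routine functional-analytic bookkeeping.
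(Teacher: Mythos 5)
Your argument is correct. The paper itself gives no proof of this proposition (it is quoted from \cite{BOO}), but your three-step scheme --- the cancellation of the linear term in $(1+A)e^{-A}-1=\sum_{k\ge2}\frac{(-1)^k(1-k)}{k!}A^k$, which upgrades the Hilbert--Schmidt input to a trace-class perturbation; consistency on $\mathscr L_1(H)$ via $\Det(e^{-A})=e^{-\Tr(A)}$ and $\Tr(A_{\mathrm{odd}})=0$; and density of $\mathscr L_1(H)$ in $\mathscr L_{1|2}(H)$ --- is precisely the standard mechanism behind the cited result, and the same expansion is the one the paper itself relies on later in the proof of Proposition~\ref{fdyddyr}.
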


\begin{remark}
Note that, for each $A\in\mathscr L_2(H)$,
$(1+A)e^{-A}-1\in\mathscr L_1(H)$. Therefore, $\Det((1+A)e^{-A})$ is
a classical Fredholm determinant.
\end{remark}

\begin{remark} It should be noted that a possibility of extension of
the Fredholm determinant to $\mathscr L_{1|2}(H)$ was already known to
Berezin in the 1960s; see~\cite{Berezin}, page~8.
\end{remark}

We will now give another useful representation of $\Det(1+A)$ for
$A\in\mathscr L_{1|2}(H)$.

\begin{proposition}\label{kjlogt9i7tg} Let $A\in\mathscr L_{1|2}(H)$
have a block form (\ref{fydydyf}). Assume that $\|A_{11}\|<1$. Then
%
\begin{equation}
\label{yfr76r}\Det(1+A)=\Det(1+A_{11})\cdot\Det\bigl
(1+A_{22}-A_{21}(1+A_{11})^{-1}A_{12}
\bigr).
\end{equation}
[On the right-hand side of formula (\ref{yfr76r}), both factors are
\textit{classical} Fredholm determinants, as both operators $A_{11}$
and $A_{22}-A_{21}(1+A_{11})^{-1}A_{12}$ belong to $\mathscr L_1(H)$.]
\end{proposition}

\begin{remark}\label{uifr86} It should be stressed that the inequality
$\|A_{11}\|<1$ can be achieved by \textit{every} operator in $\mathscr
L_{1|2}(H)$. More generally, for each fixed $\varepsilon>0$, we can
always assume that $\|A_{11}\|<\varepsilon$. Indeed, assume $\|A_{11}\|
\ge\varepsilon$. By the canonical decomposition of a compact (in
particular, trace-class) operator (e.g.,~\cite{Simon}, Theorems 1.1
and 1.2), there exists an orthogonal splitting $H_1=H_1'\oplus
R$ such that the operator $A_{11}$ acts invariantly in both subspaces
$H_1'$ and~$R$, the subspace $R$ is finite-dimensional, and the norm of
the operator $A_{11}$ in the space $H_1'$ is strictly less than
$\varepsilon$. Setting $H_2':=H_2\oplus R$, we get a new orthogonal
splitting $H=H_1'\oplus H_2'$. Write the operator $A$ in the block form
with respect to this new splitting of $H$. Since $R$ is a
finite-dimensional space, the even part of $A$ in the new splitting is
still a trace-class operator, while the odd part of $A$ in the new
splitting is still a Hilbert--Schmidt operator.
\end{remark}

\begin{pf*}{Proof of Proposition~\ref{kjlogt9i7tg}}
For $i=1,2$, let $\{P_i^{(n)}\}_{n=1}^\infty$ be an ascending sequence
of finite-dimensional orthogonal projections in $H_i$ such that
$P_i^{(n)}$ strongly converges to the identity operator in $H_i$ as
$n\to\infty$.
Set $P^{(n)}:=P^{(n)}_1+P^{(n)}_2$, $n\in\mathbb N$. Then, for each
$n\in\mathbb N$, $A^{(n)}:=P^{(n)}AP^{(n)}$ is a finite-dimensional
operator in $H$,
and
\[
\bigl\|A^{(n)}-A\bigr\|_{1|2}\to0 \qquad\mbox{as }n\to\infty.
\]
Hence, by Proposition~\ref{yrsdts},
%
\begin{equation}
\label{s54w}\Det(1+A)=\lim_{n\to\infty}\Det\bigl(1+A^{(n)}\bigr).
\end{equation}
In the block form,
%
\begin{equation}
\label{hdydjgf} A^{(n)}=\lleft[ %
\matrix{A_{11}^{(n)}&A_{21}^{(n)}
\vspace*{2pt}\cr
A_{12}^{(n)}&A_{22}^{(n)} } %
\rright],
\end{equation}
where $A_{ij}^{(n)}=P_{i}^{(n)}A_{ij}P_{j}^{(n)}$, $i,j=1,2$.
For each $n\in\mathbb N$, the operator $A^{(n)}$ is
finite-dimensional, hence, $\Det(1+A^{(n)})$ is a classical Fredholm
determinant. Therefore,
%
\begin{equation}
\label{vgjfykt}\Det\bigl(1+A^{(n)}\bigr)= \Det\lleft[ %
\matrix{ 1+A_{11}^{(n)}&+A_{21}^{(n)}
\vspace*{2pt}\cr
A_{12}^{(n)}&1+A_{22}^{(n)} } %
\rright];
\end{equation}
the latter (in fact, usual) determinant refers to the
finite-dimensional Hilbert space $P^{(n)}H$.
Since $\|A_{11}\|<1$, we have $\|A_{11}^{(n)}\|<1$ for all $n$. Hence,
$1+A_{11}^{(n)}$ is invertible in $P_{11}^{(n)}H$. Employing the
well-known formula for the determinant of a block matrix, we get from
(\ref{s54w}) and (\ref{vgjfykt})
%
\begin{equation}\label{lgfwsi}
\Det(1+A) =\lim_{n\to\infty}\Det\bigl(1+A_{11}^{(n)}\bigr)
\cdot\Det\bigl(1+A_{22}^{(n)}-A_{21}^{(n)}
\bigl(1+A_{11}^{(n)}\bigr)^{-1}A_{12}^{(n)}
\bigr).\hspace*{-30pt}
\end{equation}
We state that
%
\begin{eqnarray}
\label{gfdh}
&\displaystyle \bigl\|A_{11}^{(n)}- A_{11}\bigr\|_1\to0,\qquad
\bigl\|A_{22}^{(n)}- A_{22}\bigr\|_1\to0,&
\\
\label{dsehgf}
&\displaystyle \bigl\|A_{21}^{(n)}\bigl(1-A_{11}^{(n)}
\bigr)^{-1}A_{12}^{(n)}- A_{21}(1-A_{11})^{-1}A_{12}
\bigr\|_1\to 0&
\end{eqnarray}
as $n\to\infty$. Formula (\ref{gfdh}) is evident. In view of the formula
\[
\|BC\|_1\le\|B\|_2\|C\|_2,\qquad B,C\in\mathscr
L_2(H)
\]
(see, e.g.,~\cite{Simon}, Theorem 2.8), the proof of (\ref{dsehgf})
is routine, so we skip it.
Thus, (\ref{yfr76r}) follows from (\ref{lgfwsi})--(\ref{dsehgf}).
\end{pf*}

We will now derive an analog of formula (\ref{jdst}) for $A\in
\mathscr L_{1|2}(H)$.
As follows from the proof of~\cite{ST}, Theorem 2.4, for each $A\in
\mathscr L_1(H)$, we have
%
\begin{equation}
\label{hfdrd} \Tr\bigl(\wedge^n(A)\bigr)=\frac1{n!}\sum
_{\xi\in S_n}\operatorname{sign}(\xi)\prod
_{\eta\in\operatorname{Cycle}(\xi)} \Tr\bigl(A^{|\eta|}\bigr).
\end{equation}
Here $S_n$ denotes the set of all permutations of $1,\ldots,n$, the
product in (\ref{hfdrd}) is over all cycles $\eta$ in permutation
$\xi$, and $|\eta|$ denotes the length of cycle $\eta$. For $A\in
\mathscr L_1(H)$, we clearly have $\Tr(A)=\Tr(A_{\mathrm{even}})$.
We further note that, for each $A\in\mathscr L_2(H)$, we have $A^k\in
\mathscr L_1(H)$ for $k\ge2$. Thus, for each $A\in\mathscr
L_{1|2}(H)$, we set $C_n(A)$ to be equal to the right-hand side of
(\ref{hfdrd}) in which we set
\[
\Tr(A):=\Tr(A_{\mathrm{even}}),\qquad A\in\mathscr L_{1|2}(H).
\]
Hence, $C_n(A)$ is well defined for each $A\in\mathscr L_{1|2}(H)$,
and $C_n(A)=\Tr(\wedge^n(A))$ for each $A\in\mathscr L_1(H)$.

\begin{proposition}\label{fdyddyr}
For each $A\in\mathscr L_{1|2}(H)$, we have
%
\begin{equation}
\label{f6e6}\Det(1+A)=1+\sum_{n=1}^\infty
C_n(A).
\end{equation}
\end{proposition}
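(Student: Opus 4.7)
The plan is to prove the identity by a complex-parameter argument: show that $f_A(t) := \Det(1+tA)$ is entire in $t \in \mathbb{C}$ for every $A \in \mathscr{L}_{1|2}(H)$, identify its Taylor coefficients at $t=0$ with the $C_n(A)$'s, and then evaluate at $t=1$. This yields \eqref{f6e6} with automatic convergence of the series, which is what I expect to be the main obstacle otherwise: the naive cycle-type bound only gives $\sum_{k\ge 2}\|A\|_2^k/k$, which diverges once $\|A\|_2 \ge 1$, so a direct absolute-convergence proof of $\sum_n C_n(A)$ by estimation does not seem to work in general.

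First I verify that each $C_n$ is continuous on $\mathscr{L}_{1|2}(H)$. Since $C_n$ is a polynomial in $\Tr A_{\mathrm{even}}$ and $\Tr(A^k)$ for $k=2,\dots,n$, this reduces to: (i) $A \mapsto A_{\mathrm{even}}$ is continuous $\mathscr{L}_{1|2}(H)\to\mathscr{L}_1(H)$ by definition of the topology, and (ii) for $k\ge 2$, $A\mapsto A^k$ is continuous $\mathscr{L}_2(H)\to\mathscr{L}_1(H)$ via the telescoping bound $\|A^k-B^k\|_1 \le k\,\max(\|A\|_2,\|B\|_2)^{k-1}\|A-B\|_2$. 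Next, $f_A$ is entire: by \eqref{kfde5b} one has $f_A(t) = \Det((1+tA)e^{-tA})\cdot e^{t\,\Tr A_{\mathrm{even}}}$, and a direct computation gives $(1+tA)e^{-tA}-1 = \sum_{n\ge 2}\frac{(-1)^{n-1}(n-1)}{n!}\,t^n A^n$, which is an entire $\mathscr{L}_1(H)$-valued function of $t$ thanks to $\|A^n\|_1 \le \|A\|_2^n$; so standard Fredholm theory makes $\Det((1+tA)e^{-tA})$ entire, and hence so is $f_A$. Writing the Taylor series $f_A(t) = 1 + \sum_{n\ge 1} b_n(A)\,t^n$ therefore produces a series converging at every $t\in\mathbb{C}$, in particular at $t=1$.

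Finally, I identify $b_n(A) = C_n(A)$. Cauchy-multiplying $\det_2(1+tA) = \sum_n \alpha_n(A)\,t^n$ (whose coefficients $\alpha_n$ are polynomials in $\Tr(A^k)$ for $k=2,\dots,n$, via the exponential formula $\det_2(1+tA) = \exp(\sum_{k\ge 2}(-1)^{k+1} t^k \Tr(A^k)/k)$) with $e^{t\,\Tr A_{\mathrm{even}}} = \sum_m (\Tr A_{\mathrm{even}})^m t^m/m!$ shows that $b_n$ is a polynomial in $\Tr A_{\mathrm{even}}$ and $\Tr(A^k)$ for $k=2,\dots,n$, hence continuous on $\mathscr{L}_{1|2}(H)$ by the same argument as for $C_n$. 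For $A\in\mathscr{L}_1(H)$, \eqref{jdst} applied to $tA$ combined with \eqref{hfdrd} gives $f_A(t) = 1 + \sum_n C_n(A)\,t^n$, so $b_n(A) = C_n(A)$ on $\mathscr{L}_1(H)$. Since $\mathscr{L}_1(H)$ is dense in $\mathscr{L}_{1|2}(H)$ (approximate $A_{\mathrm{odd}}\in\mathscr{L}_2(H)$ by off-diagonal finite-rank operators, keeping $A_{\mathrm{even}}\in\mathscr{L}_1(H)$ fixed), continuity extends the equality $b_n = C_n$ to all of $\mathscr{L}_{1|2}(H)$. Setting $t=1$ concludes the proof.
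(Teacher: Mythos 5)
Your proof is correct, and it shares the paper's central device --- showing that $t\mapsto\Det(1+tA)$ is entire via the expansion $(1+tA)e^{-tA}-1=\sum_{n\ge2}\frac{(-1)^{n-1}(n-1)}{n!}t^nA^n\in\mathscr L_1(H)$ and then reading off the statement from the everywhere-convergent Taylor series at $t=1$ --- but you identify the Taylor coefficients with the $C_n(A)$ by a different mechanism. The paper first proves \eqref{f6e6} directly on the ball $\|A\|_{1|2}<1$, using the bound \eqref{fd66yu}, finite-dimensional cutoffs $A^{(k)}=P^{(k)}AP^{(k)}$, dominated convergence, and the continuity of the extended determinant from Proposition~\ref{yrsdts}; the scaling $z\mapsto zA$ then transports this small-norm identity into the coefficient identification. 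You instead observe that both $C_n$ and the Taylor coefficient $b_n$ are the same kind of object --- polynomials in $\Tr(A_{\mathrm{even}})$ and $\Tr(A^k)$, $k=2,\dots,n$ --- hence continuous on $\mathscr L_{1|2}(H)$, and that they agree on $\mathscr L_1(H)$ by \eqref{jdst} and \eqref{hfdrd}; density of $\mathscr L_1(H)$ in $\mathscr L_{1|2}(H)$ (your off-diagonal finite-rank approximation of $A_{\mathrm{odd}}$ is the right way to see this) then finishes the identification. Your route avoids the finite-dimensional approximation and the dominated-convergence step entirely, at the cost of having to verify continuity of $A\mapsto\Tr(A^k)$ via the telescoping estimate and to unwind the exponential formula for $\Det((1+tA)e^{-tA})$; in fact, once both $b_n$ and $C_n$ are exhibited as polynomials in the trace data, they are visibly the \emph{same} polynomial (the cycle-index identity \eqref{hfdrd} with $\Tr(A)$ replaced by $\Tr(A_{\mathrm{even}})$), so the density step could even be dispensed with. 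Both arguments are complete; yours is arguably the more economical, while the paper's finite-dimensional approximation scheme is reused later (in Propositions~\ref{lkfdsa} and~\ref{fyd6s6yc}), which is presumably why the author sets it up here.
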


\begin{pf} We know that formula (\ref{f6e6}) holds for all $A\in
\mathscr L_1(H)$. Next, for each $A\in\mathscr L_2(H)$,
\[
\bigl\|A^k\bigr\|_1 \le\| A\|^{k-2}\bigl\|A^2
\bigr\|_1\le\| A\|^{k-2}\|A\|_2^2\le\|A
\|_2^k,\qquad k\ge2.
\]
Hence, by the definition of $C_n(A)$,
%
\begin{equation}
\label{fd66yu}\bigl|C_n(A)\bigr|\le\|A\|_{1|2}^n,
\end{equation}
where
\[
\|A\|_{1|2}:=\max\bigl\{\|A\|_2,\|A_{\mathrm{even}}
\|_1\bigr\}.
\]
[Note that $\|\cdot\|_{1|2}$ is a norm on $\mathscr L_{1|2}(H)$ which
determines its topology.] Hence, if $\|A\|_{1|2}<1$, the series on the
right-hand side of (\ref{f6e6})\vadjust{\goodbreak} converges absolutely.
We fix any $A\in\mathscr L_{1|2}(H)$ with $\|A\|_{1|2}<1$.
For $i=1,2$, let $\{P_i^{(k)}\}_{k=1}^\infty$ be an ascending sequence
of finite-dimensional orthogonal projections as in the proof of
Proposition~\ref{kjlogt9i7tg}. Then, for each $k\in\mathbb N$,
$A^{(k)}:=P^{(k)}AP^{(k)}$ is a finite-dimensional operator in~$H$,
and
\[
\bigl\|A^{(k)}-A\bigr\|_{1|2}\to0 \qquad\mbox{as }k\to\infty.
\]
Hence, by (\ref{fd66yu}) and the dominated convergence theorem,
\[
\Det\bigl(1+A^{(k)}\bigr)=1+\sum_{n=1}^\infty
C_n\bigl(A^{(k)}\bigr)\to1+\sum
_{n=1}^\infty C_n(A).
\]
Therefore, by Proposition~\ref{yrsdts}, formula (\ref{f6e6}) holds in
this case.

Now we fix an arbitrary $A\in\mathscr L_{1|2}(H)$. Then, by (\ref
{fd66yu}), the function
\[
z\mapsto1+\sum_{n=1}^\infty
C_n(zA)=1+\sum_{n=1}^\infty
z^n C_n(A)
\]
is analytic on the set $\{z\in\mathbb C\dvtx  |z|<\|A\|_{1|2}^{-1}\}$.
Thus, by the uniqueness of analytic continuation, to prove the
proposition, it suffices to show that the function
\[
\mathbb C\ni z\mapsto\Det(1+zA)
\]
is entire.
But this can be easily deduced from Proposition~\ref{kjlogt9i7tg} and
Remark~\ref{uifr86}.
\end{pf}

Let us now assume that $H=L^2(X,m)$, where $X$ is a locally compact
Polish space and $m$ is a Radon measure on $(X,\mathcal B(X))$. We fix
any $X_1,X_2\in\mathcal B(X)$ such that $X=X_1\sqcup X_2$.
By
setting $H_i:=L^2(X_i,m)$, $i=1,2$, we get a splitting (\ref{nbytd})
of~$H$.

\begin{proposition}\label{fda}
Let $K\in\mathscr L_{1|2}(L^2(X,m))$ be an integral operator with
integral kernel $K(x,y)$ such that $\int_X|K(x,x)| m(dx)<\infty$ and
%
\begin{equation}
\label{fydex} \Tr(K_{\mathrm{even}})=\int_{X}K(x,x) m(dx).
\end{equation}
Then
%
\begin{equation}
\label{lkds}\Det(1+K)=1+\sum_{n=1}^\infty\frac
1{n!} \int_{X^n}\det\bigl[K(x_i,x_j)
\bigr]_{i,j=1,\ldots,n} m(dx_1)\cdots m(dx_n).\hspace*{-20pt}
\end{equation}
\end{proposition}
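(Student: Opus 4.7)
The plan is to apply Proposition~\ref{fdyddyr}, which yields
$$\Det(1+K)=1+\sum_{n=1}^\infty C_n(K),\qquad C_n(K)=\frac1{n!}\sum_{\xi\in S_n}\operatorname{sign}(\xi)\prod_{\eta\in\operatorname{Cycle}(\xi)}\Tr(K^{|\eta|}),$$
with the convention $\Tr(K):=\Tr(K_{\mathrm{even}})$, and then to match $n!\,C_n(K)$ term-by-term in $\xi$ with the integrated Leibniz expansion of $\det[K(x_i,x_j)]_{i,j=1}^n$.

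The main calculation runs as follows. Expanding the determinant by Leibniz and interchanging the finite sum with the integral gives
$$\int_{X^n}\det[K(x_i,x_j)]_{i,j=1}^n\,m(dx_1)\dotsm m(dx_n)=\sum_{\xi\in S_n}\operatorname{sign}(\xi)\int_{X^n}\prod_{i=1}^n K(x_i,x_{\xi(i)})\,m(dx_1)\dotsm m(dx_n).$$
For a fixed $\xi$, the integrand factors along the cycle decomposition of $\xi$: each cycle $\eta=(i_1,i_2,\dots,i_\ell)$ contributes $K(x_{i_1},x_{i_2})K(x_{i_2},x_{i_3})\dotsm K(x_{i_\ell},x_{i_1})$, depending only on the $\ell$ variables indexed by $\eta$. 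By Fubini, the integral factorises as a product over cycles. For a cycle of length $\ell\ge2$, since $K\in\mathscr L_{1|2}(H)\subset\mathscr L_2(H)$ we have $K^\ell\in\mathscr L_1(H)$, and the classical iterated-kernel identity for Hilbert--Schmidt integral operators gives
$$\int_{X^\ell} K(y_1,y_2)K(y_2,y_3)\dotsm K(y_\ell,y_1)\,m(dy_1)\dotsm m(dy_\ell)=\Tr(K^\ell);$$
for a cycle of length $1$, the integral is $\int_X K(x,x)\,m(dx)=\Tr(K_{\mathrm{even}})=:\Tr(K)$ by the standing hypothesis \eqref{fydex}. Assembling the contributions and dividing by $n!$ reproduces $C_n(K)$, and summing over $n$ yields \eqref{lkds}.

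The main obstacle is the application of Fubini, which requires absolute integrability of $\prod_{i=1}^n|K(x_i,x_{\xi(i)})|$ on $X^n$ for each $\xi$. For the sub-product associated with a cycle of length $\ell\ge2$, I would split the chain of $K$-factors into two halves of length roughly $\ell/2$ and apply Cauchy--Schwarz, reducing everything to finiteness of $\|K\|_2$; the elementary inequality $|K(x,y)K(y,z)|\le\tfrac12(|K(x,y)|^2+|K(y,z)|^2)$ provides an equivalent route. The fixed-point factors require $K(x,x)\in L^1(X,m)$, which is implicit in \eqref{fydex} but strictly speaking must be absorbed into the choice of integral-kernel representative alluded to in the remark preceding the proposition. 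Once Fubini is justified, the remaining identification is purely combinatorial.
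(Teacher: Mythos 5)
Your proposal is correct and follows essentially the same route as the paper: the paper likewise invokes Proposition~\ref{fdyddyr}, identifies $\Tr(K^l)$ for $l\ge2$ with the cyclic integral $\int_{X^l}K(x_1,x_2)\dotsm K(x_l,x_1)\,m(dx_1)\dotsm m(dx_l)$, uses \eqref{fydex} for the length-one cycles, and matches the cycle decomposition defining $C_n(K)$ with the Leibniz expansion of the determinant. The only difference is that you spell out the Fubini justification and the combinatorial matching, which the paper leaves implicit.
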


\begin{pf} For each $l=2,3,\ldots\,$, we have
%
\begin{equation}
\label{vtrsw} \Tr\bigl(K^l\bigr)=\int_{X^l}K(x_1,x_2)K(x_2,x_3)
\cdots K(x_l,x_1) m(dx_1)\cdots
m(dx_l).
\end{equation}
Note that the integral in (\ref{vtrsw}) is independent of the choice
of a version of the integral kernel of $K$. Hence, by the definition of
$C_n(K)$ and formulas (\ref{fydex}) and (\ref{vtrsw}), we conclude that
\[
C_n(K)=\frac1{n!}\int_{X^n}\det
\bigl[K(x_i,x_j)\bigr]_{i,j=1,\ldots,n}
m(dx_1)\cdots m(dx_n).
\]
Now formula (\ref{lkds}) follows from Proposition~\ref{fdyddyr}.
\end{pf}

\section{$J$-self-adjoint operators}\label{jgytdyd}

We again assume that a Hilbert space $H$ is split into two subspaces;
see (\ref{nbytd}). According to this splitting, we write a vector
$f\in H$ as $f=(f_2,f_2)$ and an operator $A\in\mathscr L(H)$ in the
block form~(\ref{fydydyf}).
Denote by $P_1$ and $P_2$ the orthogonal projections of the Hilbert
space $H$ onto $H_1$ and $H_2$, respectively.
Setting $J:=P_1-P_2$, we introduce an (indefinite) $J$-scalar product
on $H$ by
\[
[f,g]:=(Jf,g)=(f_1,g_1)-(f_2,g_2),\qquad
f,g\in H.
\]
An operator $A\in\mathscr L(H)$ is called self-adjoint in the
indefinite scalar product $[\cdot,\cdot]$, or $J$-self-adjoint, if
\[
[Af,g]=[f,Ag],\qquad f,g\in H;
\]
see, for example,~\cite{indefinite}. In terms of the block form
(\ref{fydydyf}), an operator $A\in\mathscr L(H)$ is $J$-self-adjoint
if and only if
%
\begin{equation}
\label{fdtsw} A_{11}^*=A_{11},\qquad A_{22}^*=A_{22},\qquad
A_{21}^*=-A_{12}.
\end{equation}

\begin{remark}\label{kdsea}
Assume $A$ is a usual matrix which has a block form (\ref{fydydyf}).
If the blocks of $A$ satisfy (\ref{fdtsw}), then we will call $A$ a
$J$-Hermitian matrix.
\end{remark}

For any $A\in\mathscr L(H)$, we denote by $\widehat A$ the operator
from $\mathscr L(H)$ given by
%
\begin{equation}
\label{hsst}\widehat A:=AP_1+(1-A)P_2
\end{equation}
or, equivalently, in the block form,
\[
\widehat A=\lleft[ %
\matrix{ A_{11}&A_{21}
\cr
-A_{12}&1-A_{22} } %
\rright].
\]
Clearly, if the operator $A$ is self-adjoint, then $\widehat A$ is
$J$-self-adjoint, while if $A$ is $J$-self-adjoint, then $\widehat A$
is self-adjoint. Also $\hspace*{2pt}\widehat{\hspace*{-2pt}\widehat A }=A$.

We will use below the following results.

\begin{lemma}\label{fdyrdeycx}
Let $A\in\mathscr L(H)$ be $J$-self-adjoint. Then $\|A\|=\|\widehat
A-P_2\|$.
\end{lemma}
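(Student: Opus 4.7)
The plan is short because the identity turns out to be essentially a trivial consequence of the fact that $J$ is a unitary operator.

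First I would simply compute $\widehat{A}-P_2$ directly from the definition \eqref{hsst}:
\begin{equation*}
\widehat{A}-P_2 = AP_1 + (1-A)P_2 - P_2 = AP_1 - AP_2 = A(P_1-P_2) = AJ.
\end{equation*}

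Next I would observe that $J = P_1 - P_2$ is both self-adjoint and satisfies $J^2 = P_1 + P_2 = 1$ (since $P_1 P_2 = P_2 P_1 = 0$), so $J$ is a unitary operator on $H$. Consequently, for any bounded operator $B$ on $H$ we have $\|BJ\|=\|B\|$: the inequality $\|AJ\|\le\|A\|\,\|J\|=\|A\|$ is immediate, and applying the same bound to $B=AJ$ together with $J^{-1}=J$ gives $\|A\|=\|AJ\cdot J\|\le\|AJ\|$.

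Combining the two steps yields $\|\widehat A - P_2\|=\|AJ\|=\|A\|$, which is the claim.

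I do not foresee any real obstacle; in fact the $J$-self-adjointness hypothesis on $A$ is not even used in the argument, which suggests the lemma is included purely as a technical reformulation to be invoked later when estimating norms of operators of the form $\widehat{A}-P_2$ coming from $J$-self-adjoint $A$.
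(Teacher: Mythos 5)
Your proof is correct, and it is cleaner and slightly more general than the one in the paper. The identity $\widehat A-P_2=AP_1+(1-A)P_2-P_2=A(P_1-P_2)=AJ$ is exactly right, and since $J=P_1-P_2$ is a self-adjoint involution (hence unitary), $\|AJ\|=\|A\|$ follows immediately. The paper reaches the same underlying fact by a more roundabout route: it starts from $A=\widehat A P_1+(1-\widehat A)P_2$, passes to the adjoint $A^*=P_1\widehat A+P_2(1-\widehat A)$ (here the hypothesis that $A$ is $J$-self-adjoint is used, to replace $\widehat A^{\,*}$ by $\widehat A$), and then shows via a quadratic-form computation that $B_{A^*}(f,g)=B_{\widehat A-P_2}(f,Jg)$, which is just the adjoint of your identity $\widehat A-P_2=AJ$ in disguise. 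Your observation that the $J$-self-adjointness of $A$ is never needed is accurate; the lemma holds for arbitrary $A\in\mathscr L(H)$, and in the paper the hypothesis enters only because of the detour through $A^*$ and the self-adjointness of $\widehat A$. What your version buys is brevity and the removal of an unnecessary hypothesis; what the paper's version buys is essentially nothing beyond keeping the computation in the language of quadratic forms used elsewhere in that section.
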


\begin{pf}
We have $A=\widehat A P_1+(1-\widehat A)P_2$, hence,
\[
A^*=P_1\widehat A+P_2(1-\widehat A).\vadjust{\goodbreak}
\]
Denote by $B_{A^*}$ the quadratic form on $H$ with generator $A^*$.
For any $f,g\in H$,
\begin{eqnarray*}
B_{A^*}(f,g)&=&\bigl(A^*f,g\bigr)
\\
&=&(\widehat A_{11}f_1,g_1)+(\widehat
A_{12}f_2,g_1)\\
&&{}+\bigl((1-\widehat
A_{22})f_2,g_2\bigr)+(-\widehat
A_{21}f_1,g_2).
\end{eqnarray*}
Denote $\tilde g=(g_1,-g_2)=(\tilde g_1,\tilde g_2)$. Then
\begin{eqnarray*}
B_{A^*}(f,g)&=&(\widehat A_{11}f_1,\tilde
g_1)_{H}+(\widehat A_{12}f_2,\tilde
g_1)_{H}+\bigl((\widehat A_{22}-1)f_2,
\tilde g_2\bigr)_{H}+(\widehat A_{21}f_1,
\tilde g_2)
\\
&=&(\widehat Af,\tilde g) -(f_2,\tilde g)
\\
&=&\bigl((\widehat A-P_2)f,\tilde g\bigr)
\\
&=&B_{\widehat A-P_2}(f,\tilde g).
\end{eqnarray*}
From here
\[
\|\widehat A-P_2\|=\|B_{\widehat A-P_2}\|=\|B_{A^*}\|=\bigl\|A^*
\bigr\|=\|A\|.
\]
\upqed\end{pf}

\begin{proposition}\label{kfdscd}
Let $A\in\mathscr L(H)$ be $J$-self-adjoint and assume that $0\le
\widehat A\le1$. Then $\|A\|\le1$.
\end{proposition}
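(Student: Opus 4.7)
The plan is to reduce the norm bound on $A$ to a norm bound on the self-adjoint operator $\widehat A - P_2$ via the preceding lemma, and then to estimate the latter as a quadratic form.

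First, by Lemma~\ref{fdyrdeycx}, we have $\|A\|=\|\widehat A-P_2\|$, so it suffices to show that $\|\widehat A-P_2\|\le 1$. Since $\widehat A$ is self-adjoint (this being equivalent to the $J$-self-adjointness of $A$) and $P_2$ is an orthogonal projection, the operator $\widehat A-P_2$ is self-adjoint. Hence its norm equals the supremum of $|((\widehat A-P_2)f,f)|$ over unit vectors $f\in H$.

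Next, for any $f\in H$ with $\|f\|=1$, write
\[
((\widehat A-P_2)f,f)=(\widehat A f,f)-\|P_2 f\|^2.
\]
By the hypothesis $0\le \widehat A\le 1$, the first term lies in $[0,1]$; since $P_2$ is an orthogonal projection, $\|P_2 f\|^2\in[0,1]$. Therefore the difference lies in $[-1,1]$, which yields
\[
\|\widehat A-P_2\|=\sup_{\|f\|=1}|((\widehat A-P_2)f,f)|\le 1,
\]
and thus $\|A\|\le 1$.

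There is no real obstacle here: once Lemma~\ref{fdyrdeycx} is in hand, the argument reduces to a one-line quadratic form estimate using that both $\widehat A$ and $P_2$ have spectrum in $[0,1]$. The only conceptual point worth highlighting is the slightly non-obvious fact that the \emph{difference} of two positive contractions is itself a contraction, which is what makes the identity $\|A\|=\|\widehat A-P_2\|$ of the lemma so effective.
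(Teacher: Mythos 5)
Your proof is correct and follows essentially the same route as the paper: reduce to $\|\widehat A-P_2\|\le1$ via Lemma~\ref{fdyrdeycx}, note self-adjointness, and bound the quadratic form $(\widehat Af,f)-\|P_2f\|^2$ between $-1$ and $1$ using $0\le\widehat A\le1$ and $0\le P_2\le1$. The only cosmetic difference is that you phrase the conclusion as $\sup_{\|f\|=1}|((\widehat A-P_2)f,f)|\le1$ while the paper writes the equivalent operator inequality $-1\le\widehat A-P_2\le1$.
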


\begin{pf}
By Lemma~\ref{fdyrdeycx}, it suffices to show that $\|\widehat A-P_2\|
\le1$. Note that $\widehat A-P_2$ is self-adjoint. For each $f\in H$,
\[
\bigl((\widehat A-P_2)f,f\bigr) =(\widehat Af,f)
-(f_2,f_2) \le(\widehat Af,f) \le(f,f).
\]
Hence, $ \widehat A-P_2\le1$.
Next,
\[
\bigl((\widehat A-P_2)f,f\bigr) =(\widehat Af,f)
-(f_2,f_2) \ge-(f_2,f_2) \ge
-(f,f),
\]
and so $ \widehat A-P_2\ge-1$. Thus, $-1\le\widehat A-P_2\le1$,
which implies the statement.
\end{pf}

\begin{proposition}\label{ctehswt}
Let $A\in\mathscr L(H)$ be $J$-self-adjoint and assume that $0\le
\widehat A\le1$. Then $\|A\|=1$ if and only if $\| A_{\mathrm{even}}\|=1$.
\end{proposition}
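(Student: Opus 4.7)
The plan is to reduce both directions to the identity $\|A\|=\|\widehat A-P_2\|$ from Lemma~\ref{fdyrdeycx}, together with the observation that the diagonal blocks of a self-adjoint operator with spectrum in $[0,1]$ inherit that spectral bound.

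First I would show that $0\le A_{11}\le 1$ and $0\le A_{22}\le 1$, so that $\|A_{\mathrm{even}}\|\le 1$. Indeed, for $f=(f_1,0)\in H_1$ one has $(\widehat A f,f)=(A_{11}f_1,f_1)\in[0,\|f_1\|^2]$, giving $0\le A_{11}\le 1$; for $f=(0,f_2)$ one has $(\widehat A f,f)=((1-A_{22})f_2,f_2)\in[0,\|f_2\|^2]$, giving $0\le A_{22}\le 1$. Hence $\|A_{\mathrm{even}}\|=\max(\|A_{11}\|,\|A_{22}\|)\le 1$, and $\|A\|\le 1$ by Proposition~\ref{kfdscd}. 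The easy direction, $\|A_{\mathrm{even}}\|=1\Rightarrow\|A\|=1$, then follows from the trivial bounds $\|A\|\ge\|A_{11}\|$ and $\|A\|\ge\|A_{22}\|$ obtained by testing $A$ on vectors supported in $H_1$ or $H_2$ respectively.

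For the nontrivial direction, assume $\|A\|=1$. By Lemma~\ref{fdyrdeycx}, $\|\widehat A-P_2\|=1$. Set $B:=\widehat A-P_2$; then $B$ is self-adjoint and $-1\le B\le 1$ (this inequality was established inside the proof of Proposition~\ref{kfdscd}), so $\|B\|$ equals its spectral radius and either $1\in\sigma(B)$ or $-1\in\sigma(B)$. In either case there are unit vectors $f_n=(u_n,v_n)\in H_1\oplus H_2$ with $(Bf_n,f_n)\to\pm 1$. In the $+1$ case, writing $(Bf_n,f_n)=(\widehat A f_n,f_n)-\|v_n\|^2\le 1-\|v_n\|^2$ forces $\|v_n\|\to 0$ (hence $\|u_n\|\to 1$) and $(\widehat A f_n,f_n)\to 1$. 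Expanding $(\widehat A f_n,f_n)$ via the block form of $\widehat A$, every term other than $(A_{11}u_n,u_n)$ carries a factor of $v_n$ and is therefore $O(\|v_n\|)\to 0$ by boundedness of the remaining blocks; this leaves $(A_{11}u_n,u_n)\to 1$, so $\|A_{11}\|=1$. The $-1$ case is symmetric: $\widehat A\ge 0$ forces $\|u_n\|\to 0$ and $\|v_n\|\to 1$, while the expansion collapses to $\|v_n\|^2-(A_{22}v_n,v_n)\to 0$, giving $\|A_{22}\|=1$. Either way, $\|A_{\mathrm{even}}\|=1$.

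The main obstacle is the bookkeeping inside the approximate-eigenvector argument: one must extract from $\|B\|=1$ the correct concentration of the unit vectors $f_n$ on either $H_1$ or $H_2$, and verify that the off-diagonal blocks $A_{12}$ and $A_{21}$ (which are only bounded, not small) contribute negligibly because in each case they are multiplied by the component that tends to zero. Once this spectral dichotomy for $B=\widehat A-P_2$ is in place, the rest is routine.
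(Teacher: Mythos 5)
Your proof is correct and follows essentially the same route as the paper: reduce to $\|\widehat A-P_2\|=1$ via Lemma~\ref{fdyrdeycx} and run the approximate-eigenvector dichotomy at $\pm1\in\sigma(\widehat A-P_2)$, with the off-diagonal contributions vanishing because they carry the component that tends to zero. Your handling of the converse direction via the trivial bounds $\|A\|\ge\|A_{ii}\|$ is a slightly cleaner substitute for the paper's ``inverting the arguments,'' but the substance is the same.
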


\begin{pf}
By Lemma~\ref{fdyrdeycx}, it suffices to prove that $\|\widehat A-P_2\|
=1$ if and only if
$\| A_{\mathrm{even}}\|=1$. Let us first assume that $\|\widehat
A-P_2\|=1$.

Since $0\le\widehat A\le1$, we have $0\le\widehat A_{11}\le1$ and
$0\le\widehat A_{22}\le1$. Hence, $0\le A_{11}\le1$ and $0\le
A_{22}\le1$, and so $0\le A_{\mathrm{even}}\le1$, which in turn
implies that $\|A_{\mathrm{even}}\|\le1$. We have to consider two cases.

\textit{Case} 1. $-1\in\sigma(\widehat A-P_2)$. [Here, $\sigma(B)$
denotes the spectrum of an operator $B\in\mathscr L(H)$.] Then there
exists a sequence $(f^{(n)})_{n=1}^\infty$ in $H$ such that $\|
f^{(n)}\| =1$ and
\[
\bigl((\widehat A-P_2)f^{(n)},f^{(n)}\bigr) \to-1.
\]
Since $(\widehat Af^{(n)},f^{(n)}) \ge0$ and $(P_2f^{(n)},f^{(n)}) \le
1$, we get
\[
\bigl(\widehat A f^{(n)},f^{(n)}\bigr) \to0,\qquad
\bigl\|f_2^{(n)}\bigr\| \to1.
\]
Hence, $f_1^{(n)}\to0$. From here
\[
\bigl(\widehat A_{11}f_1^{(n)},f_1^{(n)}
\bigr) +\bigl(\widehat A_{21}f_1^{(n)},f_2^{(n)}
\bigr) +\bigl(\widehat A_{12}f_2^{(n)},f_1^{(n)}
\bigr) \to0.
\]
Thus,
\[
\bigl(\widehat A_{22}f_2^{(n)},f_2^{(n)}
\bigr) \to0.
\]
Hence,
\[
\biggl( \widehat A_{22}\frac{f_2^{(n)}}{\|f_2^{(n)}\| }, \frac
{f_2^{(n)}}{\|f_2^{(n)}\| } \biggr)
\to0.
\]
Hence, $0\in\sigma(\widehat A_{22})$, and so $1\in\sigma(1-\widehat
A_{22})=\sigma(A_{22})$.

\textit{Case} 2. $1\in\sigma(\widehat A-P_2)$. Then there exists a
sequence $(f^{(n)})_{n=1}^\infty$ in $ H$ such that $\|f^{(n)}\| =1$ and
\[
\bigl((\widehat A-P_2)f^{(n)},f^{(n)}\bigr) \to1.
\]
Since
$(\widehat Af^{(n)},f^{(n)}) \le1$ and $(P_2f^{(n)},f^{(n)}) \ge0$,
we get
\[
\bigl(\widehat Af^{(n)},f^{(n)}\bigr) \to1,\qquad
\bigl\|f_2^{(n)}\bigr\| \to0.
\]
From here, analogously to the above, we conclude that $1\in\sigma
(\widehat A_{11})=\sigma(A_{11})$.

Thus, in both cases, we get $\| A_{\mathrm{even}}\|=1$.
By inverting the arguments, we conclude the inverse statement.
\end{pf}


\begin{proposition}\label{lkfdsa}
Let $A\in\mathscr L(H)$ be $J$-self-adjoint and let $A\in\mathscr
L_{1|2}(H)$. Assume that $\|A\|<1$ and $A_{11}\ge0$.
Then $\Det(1-A)>0$.
\end{proposition}

\begin{pf} Since $\|A\|<1$, we get $\|A_{11}\|<1$. Hence, by
formula (\ref{yfr76r}),
%
\begin{eqnarray}\label{vfdtrser}
\Det(1-A) &=&\Det(1-A_{11})\cdot\Det\bigl(1-A_{22}-A_{21}(1-A_{11})^{-1}A_{12}
\bigr)
\nonumber\\[-8pt]\\[-8pt]
&=&\Det(1-A_{11})\cdot\Det\bigl(1-A_{22}+A_{12}^*(1-A_{11})^{-1}A_{12}
\bigr).\nonumber
\end{eqnarray}
Note that both operators
$-A_{11}$ and $-A_{22}+ A_{12}^*(1-A_{11})^{-1}A_{12}$ are trace-class
and self-adjoint.
Since $\|A_{11}\|<1$, we get $\Det(1-A_{11})>0$. Further, $\|A_{22}\|
<1$ and, hence, there exists $\varepsilon>0$ such that $1-A_{22}\ge
\varepsilon1$. Clearly, since $A_{11}\ge0$,
\[
A_{12}^*(1-A_{11})^{-1}A_{12}\ge0,
\]
which implies
\[
1-A_{22}+A_{12}^*(1-A_{11})^{-1}A_{12}
\ge\varepsilon1.
\]
From here
\[
\Det\bigl(1-A_{22}+A_{12}^*(1-A_{11})^{-1}A_{12}
\bigr)>0,
\]
and the proposition is proven.
\end{pf}

\begin{proposition}\label{fyd6s6yc} Let $A\in\mathscr L_{1|2}(H)$ and
let $A$ be $J$-self-adjoint. Let $0\le\widehat A\le1$ and let $\|A\|
<1$. Let $L:=A(1-A)^{-1}$. Then $L$ is $J$-self-adjoint, $L\in\mathscr
L_{1|2}(H)$, and $L_{11}\ge0$, $L_{22}\ge0$.
\end{proposition}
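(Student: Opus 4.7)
My plan is to address the three claims of the proposition in turn, using the Neumann expansion of $L$ and a Schur-complement argument for the block positivity. Throughout, I rely on the characterization that $A$ is $J$-self-adjoint iff $A^*=JAJ$, which block-wise reads $A_{11}^*=A_{11}$, $A_{22}^*=A_{22}$, $A_{21}=-A_{12}^*$. Since $\|A\|<1$, the operator $1-A$ is boundedly invertible, commutes with $A$, and $L=A(1-A)^{-1}=\sum_{n=1}^{\infty}A^n$ with convergence in operator norm.

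$J$-self-adjointness of $L$ is then immediate: $JLJ=JAJ\cdot J(1-A)^{-1}J=A^{*}(1-A^{*})^{-1}=((1-A)^{-1}A)^{*}=L^{*}$. For $L\in\mathscr{L}_{1|2}(H)$, I would split $L=A+R$ with $R:=\sum_{n=2}^{\infty}A^n$. Since $A\in\mathscr{L}_{1|2}(H)\subset\mathscr{L}_2(H)$, we have $A^2\in\mathscr{L}_1(H)$ and hence $\|A^n\|_1\le\|A\|^{n-2}\|A^2\|_1$ for $n\ge 2$; summability of this geometric series shows $R\in\mathscr{L}_1(H)$. Therefore $L_{\mathrm{even}}=A_{\mathrm{even}}+R_{\mathrm{even}}\in\mathscr{L}_1(H)$ and $L_{\mathrm{odd}}=A_{\mathrm{odd}}+R_{\mathrm{odd}}\in\mathscr{L}_2(H)$, proving the ideal claim.

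The substantive step is the non-negativity of the diagonal blocks. I would solve $(1-A)g=(f,0)$ block by block, with $f\in H_1$. Since $\|A_{22}\|\le\|A\|<1$, the block $1-A_{22}$ is positive and invertible; eliminating $g_2$ and applying the $J$-Hermitian identity $A_{21}=-A_{12}^{*}$ leads to the formula
$$L_{11}=B(1-B)^{-1},\qquad B:=A_{11}-A_{12}(1-A_{22})^{-1}A_{12}^{*}.$$
The key observation is that $B$ is precisely the Schur complement of the $(2,2)$-block $\widehat A_{22}=1-A_{22}$ inside the self-adjoint operator $\widehat A$. Because $1-A_{22}$ is strictly positive, the hypothesis $\widehat A\ge 0$ is equivalent to $\widehat A_{22}\ge 0$ together with $B\ge 0$; hence $B\ge 0$. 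Combined with $0\le B\le A_{11}$ and $\|A_{11}\|<1$, this yields $\|B\|<1$, so $(1-B)^{-1}$ exists, is positive, and commutes with $B$; therefore $L_{11}=B(1-B)^{-1}\ge 0$. A symmetric block elimination with respect to the invertible block $1-A_{11}$ gives $L_{22}=B'(1-B')^{-1}$ with $B'=A_{22}-A_{12}^{*}(1-A_{11})^{-1}A_{12}$, now the Schur complement of the $(1,1)$-block inside $1-\widehat A$; non-negativity of $L_{22}$ follows from $\widehat A\le 1$ by the same argument.

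The main obstacle I anticipate is precisely this final identification: matching the block formula $L_{ii}=B(1-B)^{-1}$ with a \emph{Schur complement of $\widehat A$} rather than of $A$ itself. The $J$-Hermitian identity $A_{21}=-A_{12}^{*}$ is indispensable here, as it supplies the sign that converts $A_{12}(1-A_{22})^{-1}A_{21}$ into the self-adjoint, non-negative expression $-A_{12}(1-A_{22})^{-1}A_{12}^{*}$; without this sign flip, $B$ would fail to be self-adjoint and the transfer of positivity from $\widehat A$ to the diagonal blocks of $L$ would break down.
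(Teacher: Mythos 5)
Your proof is correct, and for the central claim $L_{11}\ge0$, $L_{22}\ge0$ it takes a genuinely different and more direct route than the paper. The paper verifies $J$-self-adjointness of $L$ by a term-by-term bookkeeping of blocks in the Neumann series, and proves $L_{11}\ge 0$ by approximating $A$ with finite-dimensional compressions $A^{(n)}=P^{(n)}AP^{(n)}$, invoking the discrete result (Proposition~\ref{ydst}) to produce determinantal point processes $\mu^{(n)}$, identifying their densities with $\det\big[L^{(n)}(i_u,i_v)\big]$ via a forward reference to the proof of Theorem~\ref{jkfds5t}, and then applying Sylvester's criterion and a limiting argument. You replace all of this probabilistic machinery by operator theory: $JLJ=L^*$ gives $J$-self-adjointness in one line, and block elimination in $L(1-A)=A$ yields $L_{11}(1-B)=B$ with $B=A_{11}-A_{12}(1-A_{22})^{-1}A_{12}^*$, which you correctly identify as the Schur complement of the strictly positive block $\widehat A_{22}=1-A_{22}$ inside the nonnegative self-adjoint operator $\widehat A$ (the sign coming from $A_{21}=-A_{12}^*$ is indeed what makes this work). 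The standard Schur-complement positivity criterion then gives $B\ge0$, while $0\le B\le A_{11}$ and $\|A_{11}\|\le\|A\|<1$ give $\|B\|<1$, hence $L_{11}=B(1-B)^{-1}\ge0$; the argument for $L_{22}$ via the Schur complement of $1-A_{11}$ in $1-\widehat A\ge0$ is symmetric. Notably, the elimination identity you use is exactly the one the paper derives later, in step 4 of the proof of Theorem~\ref{hts} (following Olshanski), but there it is exploited in the opposite direction, to pass from positivity of densities back to $\widehat K\ge0$. Your route buys a self-contained, elementary proof that avoids both the forward reference and the point-process detour; the paper's route is longer but rehearses the measure-theoretic constructions that are needed anyway in Section~\ref{tyded6}. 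Your treatment of $L\in\mathscr L_{1|2}(H)$ coincides with the paper's.
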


\begin{pf} We have
$L=A+\sum_{n=2}^\infty A^n$, and
\[
\sum_{n=2}^\infty\bigl\|A^n
\bigr\|_1\le\| A\|_2^2\sum
_{n=0}^\infty\|A\|^n<\infty.
\]
Hence, $\sum_{n=2}^\infty A^n\in\mathscr L_1(H)$, so $L\in\mathscr
L_{1|2}(H)$.

Let us show that the operator $L$ is $J$-self-adjoint. For any $f,g\in
H$, we have
\[
(Lf,g)=\sum_{n=1}^\infty\bigl(A^nf,g
\bigr)=\sum_{n=1}^\infty\bigl(f,\bigl(A^*
\bigr)^ng\bigr)=\sum_{n=1}^\infty
\bigl(f,(A_{11}-A_{21}-A_{12}+A_{22})^ng
\bigr).
\]
Denoting
$A_{11}':=A_{11}$, $A_{22}':=A_{22}$, $A_{12}':=-A_{12}$, $A_{21}':=-A_{21}$,
we get
%
\begin{equation}
\label{vts} (Lf,g)=\sum_{n=1}^\infty\mathop{
\sum_{i_k,j_k=1,2}}_{k=1,\ldots,n} \bigl(f,A'_{i_1j_1}A'_{i_2j_2}
\cdots A'_{i_nj_n}g\bigr).
\end{equation}
Assume that $f=f_1\in H_1$, $g=g_1\in H_1$. Then, in the latter sum,
the terms, in which the number of the $A_{12}'$ operators is not equal
to the number of the $A'_{21}$ operators, are equal to zero. Hence,
%
\begin{eqnarray}\label{ds567uio}
(L_{11}f_1,g_1)&=&(Lf_1,g_1)
=\sum_{n=1}^\infty\mathop{\sum
_{i_k,j_k=1,2}}_{k=1,\ldots,n} (f_1,A_{i_1j_1}A_{i_2j_2}
\cdots A_{i_nj_n}g_1)
\nonumber\\[-8pt]\\[-8pt]
&=&\sum_{n=1}^\infty\bigl(f_1,A^ng_1
\bigr)=(f_1,Lg_1)=(f_1,L_{11}g_1).\nonumber
\end{eqnarray}
Thus, $L_{11}^*=L_{11}$. Analogously, $L_{22}^*=L_{22}$.

In the case where $f=f_1\in H_1$ and $g=g_2\in H_2$, those terms in the
sum in~(\ref{vts}), in which the number of the $A'_{21}$ operators is
not equal to the number of the $A'_{12}$ operators plus one, are equal
to zero. Hence, similar to (\ref{ds567uio}), we get
\[
(L_{21}f_1,g_2)=(f_1,-L_{12}g_2),
\]
so $L_{21}^*=-L_{12}$. Thus, $L$ is $J$-self-adjoint.\vadjust{\goodbreak}

Next, we will show that $L_{11}\ge0$.
Analogously to the proofs of Propositions~\ref{kjlogt9i7tg} and \ref
{fdyddyr}, we define operators $A^{(n)}$, $n\in\mathbb N$.
Thus, each $A^{(n)}$ is $J$-self-adjoint and
%
\begin{equation}
\label{se5}\bigl\|A^{(n)}\bigr\|\le\|A\|<1.
\end{equation}
Let $\widehat{A}{}^{(n)}$ denote the corresponding transformation of the
operator $A^{(n)}$ in the Hilbert space $P^{(n)}H$. Recalling
representation (\ref{hdydjgf}) of $A^{(n)}$, we thus get
\[
\widehat{A}{}^{(n)}=\lleft[ %
\matrix{A_{11}^{(n)}&A_{21}^{(n)}
\vspace*{2pt}\cr
-A_{12}^{(n)}&P_2^{(n)}-A_{22}^{(n)}
} %
\rright]=P^{(n)}\lleft[ %
\matrix{A_{11}&A_{21}
\cr
-A_{12}&1- A_{22}
} %
\rright] P^{(n)}=P^{(n)} \widehat A
P^{(n)}.
\]
Since $0\le\widehat A\le1$, we therefore conclude that $0\le\widehat
{A}{}^{(n)}\le1$. In particular, $A_{11}^{(n)}\ge0$.

We may assume that the dimension of the Hilbert space $P^{(n)}H$ is
$n$. Choose an orthonormal basis $(e^{(i)})_{i=1,\ldots,n}$ of
$P^{(n)}H$ such that $e^{(i)}\in P_1^{(n)}H$, $i=1,\ldots,k$, and
$e^{(i)}\in P_2^{(n)}H$, $i=k+1,\ldots,n$. In terms of this orthonormal
basis, we may treat the operator $A^{(n)}$ in $P^{(n)}H$ as an $n\times
n$ $J$-Hermitian matrix $[A^{(n)}_{ij}]_{i,j=1,\ldots,n}$.
Let
\[
X^{(n)}:=\{1,2,\ldots,n\},\qquad X_1^{(n)}:=\{1,2,\ldots,k
\},\qquad X_2^{(n)}:=\{k+1,k+2,\ldots,n\},
\]
so that $X^{(n)}=X_1^{(n)}\sqcup X_2^{(n)}$.
In view of Proposition~\ref{ydst}, there exists a determinantal point
process $\mu^{(n)}$ on $\Gamma_{X^{(n)}}$ with correlation kernel
\[
K^{(n)}(i,j):=A^{(n)}_{ij},\qquad i,j=1,\ldots,n.
\]

By Proposition~\ref{lkfdsa}, we have $\det(1-A^{(n)})>0$.
Let
\[
L^{(n)}:=A^{(n)}\bigl(1-A^{(n)}\bigr)^{-1}.
\]
We define a possibly signed measure $\rho^{(n)}$ on the configuration
space $\Gamma_{X^{(n)}}$ by setting
\[
\rho^{(n)}\bigl(\{\varnothing\}\bigr):=\Det\bigl(1-A^{(n)}\bigr)
\]
and for each nonempty configuration
$\{i_1,i_2,\ldots,i_m\}\in\Gamma_{X^{(n)}}$,
\[
\rho^{(n)}\bigl(\{i_1,i_2,\ldots,i_m
\}\bigr):=\det\bigl(1-A^{(n)}\bigr)\cdot\det\bigl(L^{(n)}(i_u,i_v)
\bigr)_{u,v=1,\ldots,m}.
\]
Analogously to the proof of Theorem~\ref{jkfds5t} below, we may show
that $\rho^{(n)}=\mu^{(n)}$. Hence, for each nonempty configuration
$\{i_1,i_2,\ldots,i_m\}\in\Gamma_{X^{(n)}}$,
\[
\det\bigl(L^{(n)}(i_u,i_v)
\bigr)_{u,v=1,\ldots,m}\ge0.
\]
In particular, for any nonempty configuration $\{i_1,i_2,\ldots,i_m\}
\in\Gamma_{X_1^{(n)}}$,
\[
\det\bigl(L_{11}^{(n)}(i_u,i_v)
\bigr)_{u,v=1,\ldots,m}\ge0.
\]
Hence, by the Sylvester criterion, $L_{11}^{(n)}\ge0$, and so
%
\begin{equation}
\label{ctst}\bigl(L^{(n)}_{11}f_1,f_1
\bigr)\ge0,\qquad f_1\in H_1.
\end{equation}
By (\ref{se5}) and the dominated convergence theorem, we get
%
\begin{eqnarray}\label{jeratyuk}
\lim_{n\to\infty} \bigl(L_{11}^{(n)}f_1,f_1
\bigr)&=&\lim_{n\to\infty} \bigl(L^{(n)}f_1,f_1
\bigr)
\nonumber
\\
&=& \lim_{n\to\infty} \sum_{l=1}^\infty
\bigl(\bigl(A^{(n)}\bigr)^lf_1,f_1
\bigr)
\nonumber\\[-8pt]\\[-8pt]
&=& \sum_{l=1}^\infty\lim_{n\to\infty}\bigl(
\bigl(A^{(n)}\bigr)^lf_1,f_1\bigr)
\nonumber
\\
&=& \sum_{l=1}^\infty\bigl(A^lf_1,f_1
\bigr)=(Lf_1,f_1)=(L_{11}f_1,f_1).\nonumber
\end{eqnarray}
Thus, by (\ref{ctst}) and (\ref{jeratyuk}), $(L_{11}f_1,f_1)\ge0$
for all $f_1\in H_1$. Analogously, we get \mbox{$L_{22}\ge0$}.
\end{pf}

The following statement about $J$-Hermitian matrices was proven in
\cite{O}.

\begin{proposition}[(\cite{O})]\label{huydsd} Assume that $A$ is a
$J$-Hermitian matrix and assume that its diagonal blocks, $A_{11}$,
$A_{22}$, are nonnegative definite. Then
$\det(A)\ge0$.
\end{proposition}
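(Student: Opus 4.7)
My plan is to reduce to the case where the block $A_{11}$ is invertible by a simple perturbation argument, and then evaluate $\det(A)$ via the Schur complement formula, exploiting the fact that the minus sign in the $J$-Hermiticity relation $A_{12} = -A_{21}^*$ cancels the minus sign in the Schur complement, leaving a sum of positive semidefinite matrices.

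Concretely, suppose first that $A_{11}$ is invertible; since $A_{11}\ge 0$, this means $A_{11}$ is strictly positive definite. The block determinant identity gives
\begin{equation*}
\det(A) \;=\; \det(A_{11})\cdot \det\!\bigl(A_{22} - A_{12}A_{11}^{-1}A_{21}\bigr) \;=\; \det(A_{11})\cdot \det\!\bigl(A_{22} + A_{21}^{*}A_{11}^{-1}A_{21}\bigr),
\end{equation*}
where the second equality uses $A_{12}=-A_{21}^{*}$. Now $\det(A_{11})>0$, and since $A_{11}^{-1}\ge 0$, the matrix $A_{21}^{*}A_{11}^{-1}A_{21}$ is Hermitian and nonnegative definite. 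Together with the hypothesis $A_{22}\ge 0$, this makes $A_{22}+A_{21}^{*}A_{11}^{-1}A_{21}$ a sum of two positive semidefinite Hermitian matrices, hence positive semidefinite, so its determinant is $\ge 0$. Therefore $\det(A)\ge 0$ in this case.

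To remove the invertibility assumption, I would set $A_\epsilon := A+\epsilon I$ for $\epsilon>0$. Adding $\epsilon I$ modifies only the diagonal blocks (to $A_{11}+\epsilon I$ and $A_{22}+\epsilon I$), so $A_\epsilon$ remains $J$-Hermitian and both diagonal blocks become strictly positive definite. By the previous step, $\det(A_\epsilon)\ge 0$ for every $\epsilon>0$, and continuity of the determinant then yields $\det(A) = \lim_{\epsilon\to 0^{+}}\det(A_\epsilon)\ge 0$. The only real thing to watch in the argument is the sign bookkeeping in the Schur complement; once one sees that the two minus signs cancel and produce a genuine sum of nonnegative contributions, the rest is routine.
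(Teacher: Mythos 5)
Your proof is correct, and it uses essentially the same mechanism the paper points to: the paper does not reprove this result (it cites Olshanski) but explicitly remarks that the argument is the one appearing in the first part of the proof of Proposition~\ref{lkfdsa}, namely the block (Schur complement) determinant formula in which $A_{21}^{*}=-A_{12}$ turns the complement into $A_{22}+A_{21}^{*}A_{11}^{-1}A_{21}\ge 0$. Your additional $\epsilon$-perturbation to secure invertibility of $A_{11}$ is a standard and sound way to remove the nondegeneracy assumption.
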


\begin{remark}
Note that the arguments we used in the proof of Proposition~\ref
{lkfdsa} are similar to the arguments Olshanski~\cite{O} used to prove
Proposition~\ref{huydsd}.
\end{remark}

From now on we will again assume that $H=L^2(X,m)$, where $X$ is a
locally compact Polish space, $m$ is a Radon measure on $(X,\mathcal
B(X))$, and $X_1,X_2\in\mathcal B(X)$ are such that $X=X_1\sqcup X_2$.
We also set $H_i:=L^2(X_i,m)$, $i=1,2$.
We further define
\[
\mathcal B(X_i):=\bigl\{\Lambda\in\mathcal B(X)\mid\Lambda\subset
X_i\bigr\}
\]
and, analogously, $\mathcal B_0(X_i)$, for $i=1,2$.

For $\Delta\in\mathcal B_0(X)$, we denote by $P^\Delta$ the
orthogonal projection of $L^2(X,m)$ onto $L^2(\Delta,m)$, that is, the
operator of multiplication by $\chi_\Delta$. For an operator $K\in
\mathscr L(L^2(X,m))$, we denote $K^\Delta:=P^\Delta K P^\Delta$.
We will say that an operator $K\in\mathscr L (L^2(X,m))$ is locally
trace-class on $X_1$ and $X_2$ if, for each $\Delta\in\mathcal
B_0(X_i)$, $i=1,2$, we have $ K^\Delta\in\mathscr L_1(L^2(X,m))$.

\begin{proposition}\label{fde6ewst}
Let $K\in\mathscr L (L^2(X,m))$ be $J$-self-adjoint and a locally
trace-class operator on $X_1$ and $X_2$, and let $0\le\widehat K\le
1$. Then, for each $\Delta\in\mathcal B_0(X)$, $K^\Delta\in\mathscr
L_{1|2}(L^2(X,m))$.\vadjust{\goodbreak}
\end{proposition}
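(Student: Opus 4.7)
My plan is to verify the two conditions defining $\mathscr L_{1|2}(L^2(X,m))$ separately for $K^\Delta$. Set $\Delta_i := \Delta\cap X_i$ for $i=1,2$, so $\Delta=\Delta_1\sqcup\Delta_2$ and $P^\Delta=P^{\Delta_1}+P^{\Delta_2}$, with the range of $P^{\Delta_i}$ contained in $H_i=L^2(X_i,m)$. Writing out $K^\Delta=P^\Delta KP^\Delta$ using this decomposition, the even part with respect to the splitting $H=H_1\oplus H_2$ is
\[
(K^\Delta)_{\mathrm{even}}=P^{\Delta_1}KP^{\Delta_1}+P^{\Delta_2}KP^{\Delta_2}=K^{\Delta_1}+K^{\Delta_2},
\]
which is trace-class by the hypothesis that $K$ is locally trace-class on $X_1$ and on $X_2$. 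So the real content is to show that the odd part $(K^\Delta)_{\mathrm{odd}}=P^{\Delta_1}KP^{\Delta_2}+P^{\Delta_2}KP^{\Delta_1}$ lies in $\mathscr L_2(L^2(X,m))$, and for this it is enough to handle $P^{\Delta_1}KP^{\Delta_2}$, since the other summand has the same Hilbert--Schmidt norm.

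The key step, and the main obstacle, is to extract a usable operator inequality from the assumption $0\le\widehat K\le 1$. Since $\widehat K$ is self-adjoint with spectrum in $[0,1]$, the operator $\widehat K(1-\widehat K)$ is bounded, self-adjoint, and non-negative, so in particular its $(1,1)$ diagonal block (compressed to $H_1$) is non-negative. Using the block form $\widehat K=\bigl[\begin{smallmatrix}K_{11}&-K_{12}\\ K_{21}&\,1-K_{22}\end{smallmatrix}\bigr]$ and the $J$-self-adjointness identity $K_{21}=-K_{12}^*$, the direct computation
\[
P_1\widehat K(1-\widehat K)P_1=K_{11}(1-K_{11})-K_{12}K_{12}^*\ge 0
\]
yields the operator inequality $K_{12}K_{12}^*\le K_{11}(1-K_{11})$ on $H_1$. (Analogously, $K_{12}^*K_{12}\le K_{22}(1-K_{22})$ on $H_2$, which I will not need here.) In particular this argument requires that $K_{11}=P_1KP_1$ and $K_{22}=P_2KP_2$ are non-negative operators bounded by $1$, which follows from $0\le\widehat K\le 1$ applied to vectors in $H_1$ and $H_2$ respectively.

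With that inequality in hand, the Hilbert--Schmidt bound is a short computation. Since $P^{\Delta_1}KP^{\Delta_2}=P^{\Delta_1}K_{12}P^{\Delta_2}$ and $P^{\Delta_2}\le 1$, together with the fact that $0\le K_{11}(1-K_{11})\le K_{11}$ (since $K_{11}$ and $1-K_{11}$ commute and both lie in $[0,1]$),
\begin{align*}
\|P^{\Delta_1}KP^{\Delta_2}\|_2^{\,2}
&=\Tr\!\bigl(P^{\Delta_1}K_{12}P^{\Delta_2}K_{12}^*P^{\Delta_1}\bigr)\\
&\le \Tr\!\bigl(P^{\Delta_1}K_{12}K_{12}^*P^{\Delta_1}\bigr)
\le \Tr\!\bigl(P^{\Delta_1}K_{11}(1-K_{11})P^{\Delta_1}\bigr)\\
&\le \Tr\!\bigl(P^{\Delta_1}K_{11}P^{\Delta_1}\bigr)=\Tr K^{\Delta_1}<\infty,
\end{align*}
where the monotonicity of trace on non-negative operators and the operator inequalities deduced above are used throughout. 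Hence $P^{\Delta_1}KP^{\Delta_2}$ is Hilbert--Schmidt, so is its adjoint (up to sign) $P^{\Delta_2}KP^{\Delta_1}$, and therefore $(K^\Delta)_{\mathrm{odd}}\in\mathscr L_2$. Combined with the first paragraph, this gives $K^\Delta\in\mathscr L_{1|2}(L^2(X,m))$.
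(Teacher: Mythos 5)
Your proof is correct, but it reaches the Hilbert--Schmidt bound by a different mechanism than the paper. The paper never isolates the odd block: it observes that $P^{\Delta_1}\widehat K P^{\Delta_1}=K^{\Delta_1}\in\mathscr L_1$ and $P^{\Delta_2}(1-\widehat K)P^{\Delta_2}=K^{\Delta_2}\in\mathscr L_1$ imply $\sqrt{\widehat K}\,P^{\Delta_1}$ and $\sqrt{1-\widehat K}\,P^{\Delta_2}$ are Hilbert--Schmidt, whence $KP^{\Delta_1}=\widehat KP^{\Delta_1}=\sqrt{\widehat K}\cdot\sqrt{\widehat K}P^{\Delta_1}$ and $KP^{\Delta_2}=(1-\widehat K)P^{\Delta_2}$ are Hilbert--Schmidt; this gives the slightly stronger conclusion that all of $KP^\Delta$ (not just the off-diagonal blocks) lies in $\mathscr L_2$, and the even part is then handled exactly as you do. You instead extract the block inequality $K_{12}K_{12}^*\le K_{11}(1-K_{11})$ from $\widehat K(1-\widehat K)\ge 0$ and estimate $\|P^{\Delta_1}K_{12}P^{\Delta_2}\|_2^2\le\Tr K^{\Delta_1}$ directly; this is a correct and entirely elementary route (the positivity of the diagonal compression, the monotonicity of the trace on non-negative operators, and $0\le K_{11}\le1$ are all properly justified), and it makes explicit the quantitative fact --- the off-diagonal Hilbert--Schmidt norm is controlled by the local traces of the diagonal blocks --- that is only implicit in the paper's factorization argument. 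Both proofs consume exactly the same hypotheses, and neither has a gap.
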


\begin{pf} For each $\Delta_1\in\mathcal B_0(X_1)$, we have
%
\begin{equation}
\label{fd6s}P^{\Delta_1} \widehat KP^{\Delta_1}= K^{\Delta_1}\in
\mathscr L_1\bigl(L^2(X,m)\bigr).
\end{equation}
Since $\widehat K\ge0$, we get $P^{\Delta_1} \widehat KP^{\Delta
_1}\ge0$. Hence, by (\ref{fd6s}),
$\sqrt{\widehat K} P^{\Delta_1}\in\mathscr L_2(L^2(X,m))$. Next,
for each $\Delta_2\in\mathcal B_0(X_2)$,
\[
P^{\Delta_2}(1-\widehat K)P^{\Delta_2}=K^{\Delta_2}\in\mathscr
L_1\bigl(L^2(X,m)\bigr).
\]
Hence, analogously to the above, $\sqrt{1-\widehat K} P^{\Delta
_2}\in\mathscr L_2(L^2(X,m))$. From here
\begin{eqnarray*}
KP^{\Delta_1}&=& \widehat KP^{\Delta_1}=\sqrt{\widehat K} \sqrt{\widehat
K} P^{\Delta_1}\in\mathscr L_2\bigl(L^2(X,m)\bigr),
\\
KP^{\Delta_2}&=&(1-\widehat K)P^{\Delta_2}=\sqrt{1-\widehat K} \sqrt{1-
\widehat K} P^{\Delta_2}\in\mathscr L_2\bigl(L^2(X,m)
\bigr).
\end{eqnarray*}
Therefore, $K(P^{\Delta_1}+P^{\Delta_2})\in\mathscr L_2(L^2(X,m))$.
Thus, for each $\Delta\in\mathcal B_0(X)$, $KP^\Delta\in\mathscr
L_2(L^2(X,m))$, and so $K^\Delta\in\mathscr L_2(L^2(X,m))$.

By our assumption, for each $\Delta\in\mathcal B_0(X)$,
\begin{eqnarray*}
K_{\mathrm{even}}^\Delta&=&P^\Delta K_{\mathrm{even}}P^\Delta
=P^{\Delta_1}K_{11}P^{\Delta_1}+P^{\Delta_2}K_{22}P^{\Delta_2}\\
&=&K^{\Delta_1}+K^{\Delta_2}\in\mathscr L_1
\bigl(L^2(X,m)\bigr).
\end{eqnarray*}
(Here $\Delta_i:=\Delta\cap X_i$, $i=1,2$.) Thus, $K^\Delta\in
\mathscr L_{1|2}(L^2(X,m))$.
\end{pf}

\begin{proposition}\label{hcdrt}
Let $K\in\mathscr L(L^2(X,m))$ be $J$-self-adjoint, let
$K^\Delta\in
\mathscr L_{1|2}(L^2(X,m))$ for each $\Delta\in\mathcal B_0(X)$, and
let $K_{11}\ge0$, $K_{22}\ge0$.
Then $K$ is an integral operator and its integral kernel $K(x,y)$ can
be chosen so that:

\begin{longlist}
\item
The kernel $K(x,y)$ is $J$-Hermitian.

\item For $i=1,2$ and any $x_1,\ldots,x_n\in X_i$ ($n\in\mathbb
N$), the matrix
\[
\bigl[K(x_i,x_j) \bigr]_{i,j=1,\ldots,n}
\]
is nonnegative definite.

\item For each $\Delta\in\mathcal B_0(X)$,
%
\begin{equation}
\label{jkfds}\Tr\bigl(K_{\mathrm{even}}^\Delta\bigr)=\int
_\Delta K(x,x) m(dx).
\end{equation}
\end{longlist}
\end{proposition}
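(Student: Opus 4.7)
The plan is to treat the diagonal and off-diagonal blocks of $K$ separately, invoke known representation theorems for each, and glue the resulting kernels into a single function $K(x,y)$ on $X\times X$.

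First, I would look at the diagonal blocks. Since $K^{\Delta_i}=P^{\Delta_i}K_{ii}P^{\Delta_i}\in\mathscr L_1(L^2(X,m))$ for every $\Delta_i\in\mathcal B_0(X_i)$ (because for $\Delta\subset X_i$ the odd part of $K^\Delta$ vanishes), the self-adjoint non-negative operator $K_{ii}$ on $H_i=L^2(X_i,m)$ is locally trace-class. Applying the classical result of Soshnikov and Gohberg--Yomdin (the same one quoted in Section~\ref{kgfyufr}) to $K_{11}$ and $K_{22}$ separately, I obtain Hermitian integral kernels $K_{11}(x,y)$ on $X_1\times X_1$ and $K_{22}(x,y)$ on $X_2\times X_2$ for which (ii) holds on each $X_i\times X_i$ and $\Tr K^{\Delta_i}=\int_{\Delta_i}K_{ii}(x,x)\,m(dx)$ for every $\Delta_i\in\mathcal B_0(X_i)$.

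For the off-diagonal blocks, the assumption $K^\Delta\in\mathscr L_{1|2}(L^2(X,m))$ tells us that $(K^\Delta)_{\mathrm{odd}}$ is Hilbert--Schmidt, so for any $\Delta_1\in\mathcal B_0(X_1)$ and $\Delta_2\in\mathcal B_0(X_2)$ the operator $P^{\Delta_1}K_{12}P^{\Delta_2}$ is Hilbert--Schmidt, and hence is an integral operator with an $L^2$-kernel. Writing $X_i=\bigsqcup_n\Delta_i^{(n)}$ as a countable union of pre-compact Borel sets (possible because $X$ is locally compact Polish), I would choose an $L^2$-kernel for each $P^{\Delta_1^{(n)}}K_{12}P^{\Delta_2^{(n)}}$ and use the essential uniqueness of the Hilbert--Schmidt kernel together with a standard consistency/diagonal argument to produce a single measurable function $K_{12}(x,y)$ on $X_1\times X_2$ that represents $K_{12}$ in the usual integral-operator sense. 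Then I would simply \emph{define} $K_{21}(x,y):=-\overline{K_{12}(y,x)}$ for $x\in X_2$, $y\in X_1$; the relation $K_{21}=-K_{12}^*$ from \eqref{fdtsw} guarantees that this yields a valid integral kernel for $K_{21}$.

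Finally I assemble
\[
K(x,y):=\begin{cases}K_{11}(x,y),& x,y\in X_1,\\ K_{22}(x,y),& x,y\in X_2,\\ K_{12}(x,y),& x\in X_1,\,y\in X_2,\\ -\overline{K_{12}(y,x)},& x\in X_2,\,y\in X_1.\end{cases}
\]
By construction (i) holds, (ii) follows from the Soshnikov--GY step applied to each $K_{ii}$, and (iii) is immediate from $K_{\mathrm{even}}^\Delta=K^{\Delta_1}+K^{\Delta_2}$ with $\Delta_i=\Delta\cap X_i$:
\[
\Tr(K_{\mathrm{even}}^\Delta)=\Tr K^{\Delta_1}+\Tr K^{\Delta_2}=\int_{\Delta_1}K_{11}(x,x)\,m(dx)+\int_{\Delta_2}K_{22}(x,x)\,m(dx)=\int_\Delta K(x,x)\,m(dx).
\]
The main technical point, and the one I expect to be most delicate, is the consistent choice of the off-diagonal kernels: the Hilbert--Schmidt kernel is only defined up to an $m\otimes m$-null set on each pre-compact rectangle, so one must carefully pick representatives on an exhausting sequence $\Delta_1^{(n)}\times\Delta_2^{(n)}$ and verify that their restrictions agree almost everywhere, yielding a single globally defined measurable $K_{12}(x,y)$ on $X_1\times X_2$.
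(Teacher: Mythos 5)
Your proposal is correct and follows essentially the same route as the paper: choose kernels for the non-negative, locally trace-class diagonal blocks via the Soshnikov/Georgii--Yoo representation (which yields (ii) and the trace identity), pick an $L^2$-kernel for one off-diagonal block using that it is locally Hilbert--Schmidt, define the other off-diagonal kernel by the $J$-Hermiticity relation, and glue. The paper states this in a few lines without the consistency argument for the off-diagonal kernel that you rightly flag as the delicate point; your write-up merely makes that step explicit.
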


\begin{pf} For any $\Delta_1\in\mathcal B_0(X_1)$ and $\Delta_2\in
\mathcal B_0(X_2)$, $P^{\Delta_2}KP^{\Delta_1}$ is a
Hilbert--Schmidt operator, hence an integral operator. Therefore, we
can choose an integral kernel of $K_{21}$, which is a function
$K_{21}(x,y)$ on $X_2\times X_1$. We now define an integral kernel
$K_{12}(x,y)$ of the operator $K_{12}$ by setting
$K_{12}(x,y):=-\overline{K_{21}(y,x)}$ for $(x,y)\in X_1\times X_2$.
Next, the operators $K_{11}$ and $K_{22}$ are nonnegative, locally
trace-class operators. Hence, we can choose their integral kernels
according to~\cite{GY}, Lemma A.3; see also~\cite{LM}, Section 3. By
combining the integral kernels $K_{ij}(x,y)$, $i,j=1,2$, we obtain an
integral kernel $K(x,y)$ of $K$ with needed properties.
\end{pf}


From now on, for an operator $K$ as in Proposition~\ref{hcdrt}, we
will always assume that its integral kernel satisfies statements
(i)--(iii) of this proposition.

We denote by $B_0(X)$ the space of all measurable bounded real-valued
functions on $X$ with compact support.
For each $\varphi\in B_0(X)$, we preserve the notation $\varphi$ for
the bounded linear operator of multiplication by $\varphi$ in $L^2(X,m)$.

\begin{proposition} \label{bvgl}
Let $K\in\mathscr L(L^2(X,m))$ be $J$-self-adjoint, let \mbox{$K_{11}\ge0$},
$K_{22}\ge0$, and let $K^\Delta\in\mathscr L_{1|2}(L^2(X,m))$ for
each $\Delta\in\mathcal B_0(X)$.
Fix any $\Delta\in\mathcal B_0(X)$ and any $\varphi\in B_0(X)$ which
vanishes outside $\Delta$.
Then
$K^\Delta\varphi,\break \operatorname{sgn}(\varphi)\sqrt{|\varphi
|}K\sqrt{|\varphi|}\in\mathscr L_{1|2}(L^2(X,m)) $ and
%
\begin{eqnarray}\label{lufdesw6}
\Det\bigl(1+K^\Delta\varphi\bigr)&=&\Det\bigl(1+ \operatorname{sgn}(
\varphi)\sqrt{|\varphi|}K\sqrt{|\varphi|} \bigr)
\nonumber\\
&=&1+\sum_{n=1}^\infty\frac1{n!} \int
_{X^n}\varphi(x_1)\cdots\varphi(x_n)\\
&&\hspace*{63.5pt}{}\times
\det\bigl[K(x_i,x_j)\bigr]_{i,j=1,\ldots,n}
m(dx_1)\cdots m(dx_n).
\nonumber
\end{eqnarray}
\end{proposition}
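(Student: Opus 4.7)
My plan is to reduce everything to Proposition~\ref{fda} by observing that the two operators whose determinants appear in the statement are of the form $AB$ and $BA$ for a suitable pair $A,B$, and then to exploit a cyclicity property of the extended Fredholm determinant. Concretely, set $A:=K^\Delta\sqrt{|\varphi|}$ and $B:=\operatorname{sgn}(\varphi)\sqrt{|\varphi|}$. Then $AB=K^\Delta\varphi$, while $BA=\operatorname{sgn}(\varphi)\sqrt{|\varphi|}K^\Delta\sqrt{|\varphi|}=\operatorname{sgn}(\varphi)\sqrt{|\varphi|}K\sqrt{|\varphi|}$, because $\sqrt{|\varphi|}P^\Delta=\sqrt{|\varphi|}$ as $\varphi$ vanishes outside $\Delta$. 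The decisive observation is that the multiplication operators $\sqrt{|\varphi|}$, $\operatorname{sgn}(\varphi)$, and $\varphi$ commute with $P_1$ and $P_2$, and therefore respect the block decomposition \eqref{fydydyf}. Since $K^\Delta\in\mathscr{L}_{1|2}$, both $AB$ and $BA$ lie in $\mathscr{L}_{1|2}$: the even part is a trace-class operator multiplied by a bounded one, and the odd part is a Hilbert--Schmidt operator multiplied by a bounded one.

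To prove $\Det(1+AB)=\Det(1+BA)$, I would invoke Proposition~\ref{fdyddyr} and verify $C_n(AB)=C_n(BA)$ for every $n$. By formula \eqref{hfdrd} this reduces to two cyclicity identities. First, for $k\ge 2$, the identity $(AB)^k=A(BA)^{k-1}B$ combined with cyclicity of the trace on products of Hilbert--Schmidt operators (both $AB$ and $BA$ lie in $\mathscr{L}_2$) yields $\Tr((AB)^k)=\Tr((BA)^k)$. Second, using that $\varphi$, $\sqrt{|\varphi|}$, and $\operatorname{sgn}(\varphi)$ commute with each $P_i$, I obtain $P_i(AB)P_i=K^{\Delta_i}\varphi$ and $P_i(BA)P_i=\operatorname{sgn}(\varphi)\sqrt{|\varphi|}K^{\Delta_i}\sqrt{|\varphi|}$, where $\Delta_i:=\Delta\cap X_i$ and $K^{\Delta_i}\in\mathscr{L}_1$; cyclicity of the ordinary trace for trace-class operators then gives $\Tr(P_i(AB)P_i)=\Tr(\varphi K^{\Delta_i})=\Tr(P_i(BA)P_i)$, hence $\Tr((AB)_{\mathrm{even}})=\Tr((BA)_{\mathrm{even}})$.

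It then remains to evaluate $\Det(1+BA)$ via Proposition~\ref{fda} applied to $\tilde K:=\operatorname{sgn}(\varphi)\sqrt{|\varphi|}K\sqrt{|\varphi|}$, whose integral kernel is $\operatorname{sgn}(\varphi(x))\sqrt{|\varphi(x)|}K(x,y)\sqrt{|\varphi(y)|}$. The trace identity $\Tr(\tilde K_{\mathrm{even}})=\int_X\varphi(x)K(x,x)\,m(dx)$ needed to invoke that proposition follows from the calculation above once one verifies that $\Tr(\varphi K^{\Delta_i})=\int\varphi(x)\chi_{X_i}(x)K(x,x)\,m(dx)$; this in turn is obtained by approximating $\varphi$ uniformly by simple functions supported in $\Delta$ and applying \eqref{jkfds} to each piece. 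Pulling the scalar factors $\operatorname{sgn}(\varphi(x_i))\sqrt{|\varphi(x_i)|}\sqrt{|\varphi(x_j)|}$ out of $\det[\tilde K(x_i,x_j)]$ yields $\prod_{l}\varphi(x_l)\det[K(x_i,x_j)]$, completing the formula. I expect the main obstacle to be the second cyclicity identity for the even-part trace: since $K^\Delta\varphi$ and $\operatorname{sgn}(\varphi)\sqrt{|\varphi|}K\sqrt{|\varphi|}$ need not be trace class, only the even parts admit a well-defined trace, and the argument relies crucially on $\varphi$ being a multiplication operator, and hence commuting with the block structure.
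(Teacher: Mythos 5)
Your proposal is correct and follows essentially the same route as the paper: the equality of the two determinants is obtained by checking $C_n(AB)=C_n(BA)$ through cyclicity of the trace (for the powers $k\ge2$ via the Hilbert--Schmidt pairing, and for the even parts via the fact that multiplication by $\varphi$ commutes with $P_1,P_2$), and the series expansion then comes from Proposition~\ref{fda} together with the trace identity derived from \eqref{jkfds}. The only cosmetic difference is that you apply Proposition~\ref{fda} to $\operatorname{sgn}(\varphi)\sqrt{|\varphi|}\,K\sqrt{|\varphi|}$ rather than to $K^\Delta\varphi$, which changes nothing of substance.
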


\begin{pf} Since $K^\Delta\in\mathscr L_{2}(L^2(X,m))$, $K^\Delta
\varphi\in\mathscr L_{2}(L^2(X,m))$. Since $K_{\mathrm{even}}^\Delta
\in\mathscr L_1(L^2(X,m))$,
\[
\bigl(K^\Delta\varphi\bigr)_{\mathrm{even}}=K^\Delta_{\mathrm{even}}
\varphi\in\mathscr L_1\bigl(L^2(X,m)\bigr).
\]
Thus, $K^\Delta\varphi\in\mathscr L_{1|2}(L^2(X,m))$.

Denote $\psi_1:=\operatorname{sgn}(\varphi)\sqrt{|\varphi|}$ and
$\psi_2:=\sqrt{|\varphi|}$, $\psi_1,\psi_2\in B_0(X)$.
Since $\psi_1$ and $\psi_2$ vanish outside $\Delta$, we get
\[
\psi_1 K\psi_2=\psi_1 K^\Delta
\psi_2
\]
and, analogously to the above, we conclude that $\psi_1 K\psi_2\in
\mathscr L_{1|2}(L^2(X,m))$.

Since $K^\Delta_{\mathrm{even}}\in\mathscr L_1(L^2(X,m))$ and since
$\psi_1,\psi_2\in\mathscr L(L^2(X,m))$, by (\ref{Trace}),
%
\begin{eqnarray}\label{taq4}
\Tr\bigl(\bigl(K^\Delta\varphi\bigr)_{\mathrm{even}}\bigr)&=&\Tr
\bigl(K^\Delta_{\mathrm
{even}}\psi_2\psi_1\bigr)=
\Tr\bigl(\psi_1K^\Delta_{\mathrm{even}} \psi_2
\bigr)
\nonumber\\[-8pt]\\[-8pt]
&=&\Tr(\psi_1K_{\mathrm{even}} \psi_2)= \Tr\bigl((
\psi_1 K \psi_2)_{\mathrm{even}}\bigr).\nonumber
\end{eqnarray}
Next, for $l=2,3,\ldots\,$,
%
\begin{eqnarray}\label{ctsgdrd}
\Tr\bigl((\psi_1 K\psi_2)^l\bigr)&=&\Tr
\bigl(\psi_1 K^\Delta\varphi K^\Delta\varphi\cdots
K^\Delta\varphi K^\Delta\psi_2\bigr)
\nonumber\\[-8pt]\\[-8pt]
&=&\Tr\bigl(K^\Delta\varphi K^\Delta\varphi\cdots
K^\Delta\varphi K^\Delta\psi_2\psi_1
\bigr)=\Tr\bigl(\bigl(K^\Delta\varphi\bigr)^l\bigr).\nonumber
\end{eqnarray}
By (\ref{taq4}) and (\ref{ctsgdrd}), $C_n(K^\Delta\varphi)=C_n(\psi_1
K\psi_2)$ for each $n\in\mathbb N$, hence, formula (\ref
{lufdesw6}) holds.

Next, we note that the integral kernel $K^\Delta(x,y)$ of the operator
$K^\Delta$
is the restriction of $K(x,y)$ to $\Delta^2$. Clearly, the integral
kernel of $K^\Delta\varphi$ is $K^\Delta(x,y)\varphi(y)$. Using
(\ref{jkfds}), it is not hard to show that
\[
\Tr\bigl(\bigl(K^\Delta\varphi\bigr)_{\mathrm{even}}\bigr)=\int
_X K^\Delta(x,x)\varphi(x) m(dx).
\]
Hence, by Proposition~\ref{fda},
\begin{eqnarray*}
&&
\Det\bigl(1+K^\Delta\varphi\bigr)\\
&&\qquad=1+\sum_{n=1}^\infty
\frac1{n!} \int_{X^n}\det\bigl[K^\Delta(x_i,x_j)
\varphi(x_j)\bigr]_{i,j=1,\ldots,n} m(dx_1)\cdots
m(dx_n)
\\
&&\qquad=1+\sum_{n=1}^\infty\frac1{n!}\int
_{X^n}\varphi(x_1)\cdots\varphi(x_n)\\
&&\hspace*{63pt}\qquad\quad{}\times\det\bigl[K^\Delta(x_i,x_j)\bigr]_{i,j=1,\ldots,n}
m(dx_1)\cdots m(dx_n)
\\
&&\qquad=1+\sum_{n=1}^\infty\frac1{n!}\int
_{X^n}\varphi(x_1)\cdots\varphi(x_n)\\
&&\hspace*{63pt}\qquad\quad{}\times
\det\bigl[K(x_i,x_j)\bigr]_{i,j=1,\ldots,n}
m(dx_1)\cdots m(dx_n).
\end{eqnarray*}
\upqed\end{pf}

\section{Main results}\label{tyded6}

We again assume that $X$ is a locally compact Polish space and $m$ is a
Radon measure on $(X,\mathcal B(X))$. We will also assume that $m$
takes a positive value on each open nonempty set in $X$. Let
$\Gamma=\Gamma_X$ be the configuration space over $X$. Let $\mu$ be a
point process on $X$, that is, a probability measure on
$(\Gamma,\mathcal B(\Gamma))$. Assume that $\mu$ satisfies
%
\begin{equation}
\label{dsts} \int_\Gamma C^{|\gamma\cap\Delta|} \mu(d\gamma)
\qquad\mbox{for all $\Delta\in\mathcal B_0(X)$ and all $C>0$}.
\end{equation}
Then the Bogoliubov functional of $\mu$ is defined as
%
\begin{equation}
\label{vyds} B_\mu(\varphi):=\int_\Gamma\prod
_{x\in\gamma}\bigl(1+\varphi(x)\bigr) \mu(d\gamma),\qquad \varphi
\in B_0(X).
\end{equation}
Note that, since the function $\varphi$ has compact support, only a
finite number of terms in the product $\prod_{x\in\gamma}(1+\varphi
(x))$ are not equal to one. Note also that the integrability of the
function $\prod_{x\in\gamma}(1+\varphi(x))$ for each $\varphi\in
B_0(X)$ is equivalent to condition (\ref{dsts}).
If a point process $\mu$
has correlation functions $(k_\mu^{(n)})_{n=1}^\infty$ [see (\ref{cdtrs})],
then condition (\ref{dsts}) is also equivalent to
\begin{eqnarray}
\sum_{n=1}^\infty\frac{C^n}{n!}\int
_{\Delta^n}k_\mu^{(n)}(x_1,\ldots,x_n) m(dx_1)\cdots
m(dx_n)<\infty\nonumber\\
&&\eqntext{\mbox{for all $\Delta\in\mathcal B_0(X)$ and all $C>0$},}
\end{eqnarray}
and the Bogoliubov functional of $\mu$ is given by
%
\begin{equation}
\label{gsts}B_\mu(\varphi)=1+\sum_{n=1}^\infty
\frac1{n!}\int_{X^n}\varphi(x_1)\cdots
\varphi(x_n)k_\mu^{(n)}(x_1,\ldots,x_n) m(dx_1)\cdots m(dx_n)\hspace*{-30pt}
\end{equation}
for each $\varphi\in B_0(X)$.
The Bogoliubov functional of $\mu$ uniquely determines this point
process. For more detail about the Bogoliubov functional see, for
example,~\cite{BF}.

Let us now briefly recall some known facts about configuration spaces
and point processes; see, for example,~\cite{DVJ,KMM} for further details.
The $\sigma$-algebra $\mathcal B(\Gamma)$ coincides with the minimal
$\sigma$-algebra on $\Gamma$ with respect to which all mappings of
the form $\Gamma\ni\gamma\mapsto|\gamma\cap\Lambda|$ with
$\Lambda\in\mathcal B_0(X)$ are measurable. For a fixed set $\Delta
\in\mathcal B(X)$,
we denote by $\mathcal B_\Delta(\Gamma)$ the minimal $\sigma
$-algebra on $\Gamma$ with respect to which all mappings of the form
$\Gamma\ni\gamma\mapsto|\gamma\cap\Lambda|$ with $\Lambda\in
\mathcal B_0(X)$, $\Lambda\subset\Delta$, are measurable. In
particular, $\mathcal B_\Delta(\Gamma)$ is a sub-$\sigma$-algebra of
$\mathcal B(\Gamma)$. The $\sigma$-algebras $\mathcal B(\Gamma_\Delta)$
and $\mathcal B_\Delta(\Gamma)$ can be identified in the
sense that, for each $A\in\mathcal B(\Gamma_\Delta)$, $\{\gamma\in
\Gamma\mid\gamma\cap\Delta\in A\}\in\mathcal B_\Delta(\Gamma)$
and each set from $\mathcal B_\Delta(\Gamma)$ has a unique such
representation. Hence, the restriction of a point process $\mu$ on $X$
to the $\sigma$-algebra $\mathcal B_\Delta(\Gamma)$---denoted by
$\mu_\Delta$---can be identified with a point process on $\Delta$,
that is, with a probability measure on $(\Gamma_\Delta,\mathcal
B(\Gamma_\Delta))$.

Let $\Delta$ be a compact subset of $X$. Then the configuration space
$\Gamma_\Delta$ consists of all finite configurations in $\Delta$,
that is,
$\Gamma_\Delta=\bigsqcup_{n=0}^\infty\Gamma_\Delta^{(n)}$,
where $\Gamma_\Delta^{(0)}:=\{\varnothing\}$ and for $n\in\mathbb
N$, $\Gamma_\Delta^{(n)}$ consists of all $n$-point configurations in
$\Delta$.
Denote
\[
\widetilde{\Delta}^n:=\bigl\{(x_1,\ldots,x_n)
\in\Delta^n\mid\mbox{$x_i\ne x_j$ if $i\ne
j$}\bigr\}.
\]
Let $\mathcal B(\Gamma_\Delta^{(n)})$ denote the image of the $\sigma
$-algebra $\mathcal B(\widetilde{\Delta}^n)$ under the mapping
\[
\widetilde{\Delta}^n\ni(x_1,\ldots,x_n)
\mapsto\{x_1,\ldots,x_n\}\in\Gamma^{(n)}_\Delta.
\]
Then $\mathcal B(\Gamma_\Delta)$ is the minimal $\sigma$-algebra on
$\Gamma_\Delta$ which contains all $\mathcal B(\Gamma_\Delta^{(n)})$,
$n\in\mathbb N$.
A point process $\mu$ on $X$ has local densities in $\Delta$ if, for
each $n\in\mathbb N$,
there exists a nonnegative measurable symmetric function $d_\mu
^{(n)}[\Delta](x_1,\ldots,x_n)$ on $\widetilde{\Delta}^n$ such that
\begin{eqnarray*}
&&\int_{\Gamma_\Delta^{(n)}}f^{(n)}(\gamma) \mu_\Delta(d\gamma
)\\
&&\qquad=\frac1{n!}\int_{\widetilde{\Delta}^n}f^{(n)} \bigl(
\{x_1,\ldots,x_n\}\bigr)\,d_\mu^{(n)}[
\Delta](x_1,\ldots,x_n) m(dx_1)\cdots
m(dx_n)
\end{eqnarray*}
for each measurable function $f^{(n)}\dvtx \Gamma_\Delta^{(n)}\to
[0,\infty)$. We also denote $d_\mu^{(0)}[\Delta]:=\mu_\Delta(\{
\varnothing\})$. In the case where $X=\Delta$ (so that $X$ is a
compact Polish space), we will write $d_{\mu}^{(n)}$ instead of $d_\mu
^{(n)}[\Delta]$.

\begin{theorem}\label{jkfds5t}
Let $K\in\mathscr L (L^2(X,m))$ be $J$-self-adjoint. Let $K$ be a
locally trace-class operator on $X_1$ and $X_2$, and let $0\le\widehat
K\le1$. Then there exists a unique point process $\mu$ on $X$ which
has correlation functions
%
\begin{equation}\label{uydr}
k_\mu^{(n)}(x_1,\ldots,x_n)=\det
\bigl[K(x_i,x_j) \bigr]_{i,j=1,\ldots,n}.
\end{equation}
The Bogoliubov functional of $\mu$ is given by
%
\begin{equation}
\label{jcy} B_\mu(\varphi)=\Det\bigl(1+\operatorname{sgn}(\varphi)
\sqrt{|\varphi|} K\sqrt{|\varphi|} \bigr),\qquad \varphi\in B_0(X).
\end{equation}
If additionally $\|K\|<1$, then for each $\Delta\in\mathcal B_0(X)$,
the point process $\mu$ has local densities in $\Delta$:
%
\begin{eqnarray}\label{fdjgy}
d_\mu^{(0)}[\Delta]&=&\Det\bigl(1-K^\Delta\bigr),
\nonumber\\[-8pt]\\[-8pt]
d_\mu^{(n)}[\Delta](x_1,\ldots,x_n)&=&
\Det\bigl(1-K^\Delta\bigr)\det\bigl[L[\Delta](x_i,x_j)
\bigr]_{i,j=1,\ldots,n},\nonumber
\end{eqnarray}
where $L[\Delta]:=K^\Delta(1-K^\Delta)^{-1}$.
\end{theorem}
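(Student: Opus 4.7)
Observe first that $\|K\|\le 1$ is automatic by Proposition~\ref{kfdscd}, so the theorem concerns operators whose norm lies in $[0,1]$. The strategy is to first establish the stronger conclusion (the explicit local densities \eqref{fdjgy}) under the additional assumption $\|K\|<1$, and then to recover the case $\|K\|=1$ by a limiting argument. Uniqueness of $\mu$ given \eqref{uydr} follows from the Lenard-type moment-determinacy argument once one bounds $\int_{\Delta^n}k_\mu^{(n)}\,dm^{\otimes n}$ suitably, which is standard.

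\emph{Construction on compacts under $\|K\|<1$.} Fix $\Delta\in\mathcal{B}_0(X)$ and set $\Delta_i:=\Delta\cap X_i$. The operator $K^\Delta$ is $J$-self-adjoint, lies in $\mathscr{L}_{1|2}(L^2(X,m))$ by Proposition~\ref{fde6ewst}, satisfies $\widehat{K^\Delta}=P^\Delta\widehat KP^\Delta\in[0,1]$, and has $\|K^\Delta\|<1$. Proposition~\ref{fyd6s6yc} then gives that $L[\Delta]:=K^\Delta(1-K^\Delta)^{-1}$ is $J$-self-adjoint and lies in $\mathscr{L}_{1|2}$ with $L[\Delta]_{11},L[\Delta]_{22}\ge0$. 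Fix an integral kernel of $L[\Delta]$ as in Proposition~\ref{hcdrt} and define the candidate densities $d_\mu^{(n)}[\Delta]$ by \eqref{fdjgy}. Non-negativity is then immediate: Proposition~\ref{lkfdsa} applied to $K^\Delta$ (whose $(1,1)$-block $P^{\Delta_1}K_{11}P^{\Delta_1}$ is non-negative) yields $\Det(1-K^\Delta)>0$, while Proposition~\ref{huydsd} applied to the $J$-Hermitian matrix $[L[\Delta](x_i,x_j)]$ with non-negative definite diagonal blocks yields $\det[L[\Delta](x_i,x_j)]\ge0$. Total mass equal to $1$ follows by combining Proposition~\ref{fda} for $L[\Delta]$ with the multiplicative identity
$$\Det(1-K^\Delta)\,\Det(1+L[\Delta])=\Det\big((1-K^\Delta)(1+L[\Delta])\big)=\Det(1)=1,$$
the multiplicativity in $\mathscr{L}_{1|2}$ being proved by the same finite-dimensional approximation scheme used in Proposition~\ref{lkfdsa}. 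Hence \eqref{fdjgy} defines a probability measure $\mu_\Delta$ on $\Gamma_\Delta$.

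\emph{Consistency, correlation functions, and Bogoliubov.} The standard identity
$$k_{\mu_\Delta}^{(n)}(x_1,\dots,x_n)=\sum_{m=0}^\infty\frac{1}{m!}\int_{\Delta^m}d_\mu^{(n+m)}[\Delta](x_1,\dots,x_n,y_1,\dots,y_m)\,m(dy_1)\cdots m(dy_m),$$
together with a determinantal resummation exploiting $K^\Delta=L[\Delta](1+L[\Delta])^{-1}$, yields $k_{\mu_\Delta}^{(n)}(x_1,\dots,x_n)=\det[K(x_i,x_j)]$ on $\Delta^n$. Since these correlation functions are independent of $\Delta$, the family $\{\mu_\Delta\}_\Delta$ is consistent under restriction, and Kolmogorov's theorem (applied to the sub-$\sigma$-algebras $\mathcal{B}_\Delta(\Gamma)$) produces a unique point process $\mu$ on $\Gamma$ with correlation functions \eqref{uydr}. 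Formula \eqref{jcy} for the Bogoliubov functional then follows directly from \eqref{gsts} and Proposition~\ref{bvgl}.

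\emph{General case and main obstacle.} For $\|K\|=1$, one approximates $K$ by $J$-self-adjoint operators $K^{(n)}$ satisfying the hypotheses with $\|K^{(n)}\|<1$ and with correlation kernels converging (say $m^{\otimes k}$-a.e.\ for each $k$) to $K$; the measures $\mu^{(n)}$ produced above then converge vaguely to a $\mu$ with the required properties, the Bogoliubov formula being preserved under this limit via Proposition~\ref{bvgl} and dominated convergence. I expect two main obstacles: (i) rigorously establishing the multiplicative property $\Det(AB)=\Det(A)\Det(B)$ in the extended setting $\mathscr{L}_{1|2}$, which underlies the normalization step; and (ii) producing, in the degenerate case $\|K\|=1$, an approximation preserving simultaneously the $J$-self-adjoint structure, the spectral condition $0\le\widehat{K^{(n)}}\le1$, and the integral-operator representation of the correlation kernel, since the obvious choice $\widehat K\mapsto(1-\varepsilon)\widehat K$ yields $K_\varepsilon=(1-\varepsilon)K+\varepsilon P_2$ whose second summand is not an integral operator.
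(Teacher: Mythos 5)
Your treatment of the case $\|K\|<1$ is essentially the paper's argument: the same chain Proposition~\ref{fde6ewst} $\to$ Proposition~\ref{lkfdsa} $\to$ Proposition~\ref{fyd6s6yc} $\to$ Proposition~\ref{hcdrt} $\to$ Proposition~\ref{huydsd} establishing non-negativity of the candidate densities, normalization via multiplicativity of the extended Fredholm determinant (the paper simply cites \cite[Corollary A.3]{BOO} for this, so your obstacle (i) is a non-issue), and Kolmogorov consistency over an exhausting sequence of compacts. The only cosmetic difference is that the paper verifies consistency and extracts the correlation functions through the Bogoliubov functional identity $B_{\mu_\Delta}(\varphi)=\Det(1+K^\Delta\varphi)$ combined with Proposition~\ref{bvgl}, rather than through a direct resummation of the densities; both routes work.

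The genuine gap is in the case $\|K\|=1$, and it is exactly the obstacle (ii) you flag without resolving. The approximation you are looking for is not $\widehat K\mapsto(1-\varepsilon)\widehat K$ but simply $K\mapsto K_\varepsilon:=\varepsilon K$. Then $K_\varepsilon$ is still a $J$-self-adjoint integral operator, locally trace-class on $X_1$ and $X_2$, with $\|K_\varepsilon\|<1$, and
$$\widehat{K_\varepsilon}=\varepsilon KP_1+(1-\varepsilon K)P_2=\varepsilon\widehat K+(1-\varepsilon)P_2,$$
a convex combination of two operators lying in $[0,1]$, hence itself in $[0,1]$. This produces point processes $\mu_\varepsilon$ with correlation functions $\varepsilon^n\det\big[K(x_i,x_j)\big]$ for every $\varepsilon\in(0,1)$. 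The paper then does \emph{not} pass to a vague limit of the measures $\mu_\varepsilon$ (your sketch of that step is itself incomplete: no tightness argument and no identification of the limit's correlation functions). Instead it observes that each correlation measure built from $\varepsilon^n\det\big[K(x_i,x_j)\big]$ is $\star$-positive definite in the sense of \cite{KK}, that this property survives the pointwise limit $\varepsilon\to1$, and that the growth bound $1+\sum_{n\ge1}\frac{C^n}{n!}\int_{\Delta^n}k^{(n)}\,dm^{\otimes n}=\Det(1+CK^\Delta)<\infty$ from Proposition~\ref{bvgl} then yields existence and uniqueness of $\mu$ via \cite[Corollary~1]{LM}. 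Without either this device or a fully worked-out compactness argument, your proof does not cover $\|K\|=1$, which is precisely the regime of the Borodin--Olshanski examples, where $\widehat K$ is an orthogonal projection.
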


\begin{pf} By Proposition~\ref{kfdscd}, $\|K\|\le1$. We first
assume that $\|K\|<1$. We fix any compact $\Delta\subset X$. By
Proposition~\ref{fde6ewst}, $K^\Delta\in\mathscr L_{1|2}(L^2(X,m))$,
hence, $K^\Delta\in\mathscr L_{1|2}(L^2(\Delta,m))$. Clearly,
$K^\Delta$ is $J$-self-adjoint. Setting $\Delta_i:=\Delta\cap X_i$,
$i=1,2$, we get
%
\begin{eqnarray}\label{gf64w5q6}
P^\Delta\widehat K P^\Delta&=& P^\Delta
\bigl(KP_1+(1-K)P_2\bigr)P^{\Delta
}
\nonumber\\[-8pt]\\[-8pt]
&=& K^\Delta P^{\Delta_1}+\bigl(1-K^\Delta
\bigr)P^{\Delta_2}=\widehat{K}^\Delta,\nonumber
\end{eqnarray}
where the latter operator is understood as the transformation (\ref
{hsst}) of the operator $K^\Delta$ in the Hilbert space
$L^2(\Delta,m)=L^2(\Delta_1,m)\oplus L^2(\Delta_2,m)$. As $0\le\widehat
K\le1$, we conclude from (\ref{gf64w5q6}) that $0\le\widehat{K}^\Delta
\le1$. Since $\|K\|<1$, we have $\|K^\Delta\|<1$. Hence, by
Proposition~\ref{lkfdsa}, $\Det(1-K^\Delta)>0$.

Furthermore, by Proposition~\ref{fyd6s6yc}, the operator $L[\Delta]$
is $J$-self-adjoint and
\[
L[\Delta]\in\mathscr L_{1|2}\bigl(L^2(\Delta,m)\bigr),\qquad L[
\Delta]_{11}\ge0,\qquad L[\Delta]_{22}\ge0.
\]
Hence, we can choose an integral kernel $L[\Delta](x,y)$ of the
operator $L[\Delta]$ according to Proposition~\ref{hcdrt}. Therefore,
for any $x_1,\ldots,\break x_n\in\Delta_1$, $x_{n+1},\ldots,x_{n+m}\in\Delta_2$,
the matrix $ [L[\Delta](x_i,x_j) ]_{i,j=1,\ldots,n+m}$ is $J$-Hermitian
and the diagonal blocks
\[
\bigl[L[\Delta](x_i,x_j) \bigr]_{i,j=1,\ldots,n},\qquad
\bigl[L[\Delta](x_i,x_j) \bigr]_{i,j=n+1,\ldots,n+m}
\]
are nonnegative definite. Hence, by
Proposition~\ref{huydsd},
\[
\det\bigl[L[\Delta](x_i,x_j) \bigr]_{i,j=1,\ldots,n+m}
\ge0.
\]
Therefore, for each $n\in\mathbb N$, the function
\[
\widetilde{\Delta}^n\ni(x_1,\ldots,x_n)
\mapsto\det\bigl[L[\Delta](x_i,x_j)
\bigr]_{i,j=1,\ldots,n}
\]
is symmetric and takes nonnegative values.\vadjust{\goodbreak}

Hence, we can define a positive measure $\mu_\Delta$ on $(\Gamma_\Delta
,\mathcal B(\Gamma_\Delta))$ for which
%
\begin{eqnarray}\label{ufdrt6sw}
d_{\mu_\Delta}^{(0)}&=&\Det\bigl(1-K^\Delta\bigr),
\nonumber\\[-8pt]\\[-8pt]
\qquad d_{\mu_\Delta}^{(n)}(x_1,\ldots,x_n)&=&\Det
\bigl(1-K^\Delta\bigr)\det\bigl[L[\Delta](x_i,x_j)
\bigr]_{i,j=1,\ldots,n},\qquad n\in\mathbb N.\nonumber
\end{eqnarray}
Note that
\[
\det\bigl[L[\Delta](x_i,x_j) \bigr]_{i,j=1,\ldots,n}=0
\qquad\mbox{for all }(x_1,\ldots,x_n)\in\Delta^n
\setminus\widetilde{\Delta}_n,\qquad n\in\mathbb N.
\]
Hence, the Bogoliubov functional of $\mu_\Delta$ is given by
%
\begin{eqnarray}\label{vcdvgtd}
B_{\mu_\Delta}(\varphi)&=&\Det\bigl(1-K^\Delta\bigr)\nonumber\\
&&{}\times \Biggl(1+\sum
_{n=1}^\infty\frac1{n!}\int
_{\Delta^n} \bigl(1+\varphi(x_1)\bigr)\cdots\bigl(1+
\varphi(x_n)\bigr)
\nonumber\\[-8pt]\\[-8pt]
&&\hspace*{81.6pt}{} \times\det\bigl[L[\Delta](x_i,x_j)
\bigr]_{i,j=1,\ldots,n} m(dx_1)\cdots m(dx_n)
\Biggr),\nonumber\\
&&\eqntext{\varphi\in B(\Delta).}
\end{eqnarray}
Here $B(\Delta)$ denotes the set of all bounded measurable functions
on $\Delta$.
It follows from Proposition~\ref{bvgl} and (\ref{vcdvgtd}) that
%
\begin{equation}
\label{setfse} B_{\mu_\Delta}(\varphi)=\Det\bigl(1-K^\Delta\bigr)\Det
\bigl(1+L[\Delta](1+\varphi)\bigr), \qquad\varphi\in B(\Delta).
\end{equation}
Hence, by~\cite{BOO}, Corollary A.3, and Proposition~\ref{bvgl},
%
\begin{eqnarray}\label{trsa5}
B_{\mu_\Delta}(\varphi)&=&\Det\bigl(1-K^\Delta\bigr) \bigl(1+L[\Delta
](1+\varphi) \bigr)
\nonumber
\\
&=&\Det\bigl(1+K^\Delta\varphi\bigr)
\\
&=&\Det\bigl(1+\operatorname{sgn}(\varphi)\sqrt{|\varphi|}K\sqrt
{|\varphi|}
\bigr),\qquad \varphi\in B(\Delta).\nonumber
\end{eqnarray}

Now we take any sequence of compact subsets of $X$, $\{\Delta_n\}
_{n=1}^\infty$, such that
\[
\Delta_1\subset\Delta_2\subset\cdots,\qquad \bigcup
_{n=1}^\infty\Delta_n=X.
\]
By (\ref{trsa5}), the probability measures $\mu_{\Delta_n}$ on
$(\Gamma,\mathcal B_{\Delta_n}(\Gamma))$ form a consistent family of
probability measures. Therefore, by the Kolmogorov theorem, there
exists a unique probability measure on $(\Gamma,\mathcal B(\Gamma))$
such that the restriction of $\mu$ to each $\mathcal B_{\Delta
_n}(\Gamma)$ coincides with $\mu_{\Delta_n}$. By (\ref{trsa5}), the
Bogoliubov functional of $\mu$ is given by (\ref{jcy}), while the
statement about the local densities of $\mu$ follows from (\ref{ufdrt6sw}).
The determinantal form of the correlation functions of $\mu$---formula
(\ref{uydr})---follows from (\ref{gsts}), (\ref{jcy}) and
Proposition~\ref{bvgl}.

Let us now consider the case where $\|K\|=1$. For each $\varepsilon\in
(0,1)$, set $K_\varepsilon:=\varepsilon K$. Hence, $\|K_\varepsilon\|
<1$. We have
%
\begin{equation}
\label{jfd6}
\widehat K_\varepsilon= \varepsilon KP_1+(1-
\varepsilon K)P_2=\varepsilon\widehat K+(1-\varepsilon)P_2.
\end{equation}
Since $\widehat K\ge0$ and $P_2\ge0$, we get $ \widehat K_\varepsilon
\ge0$, and since $\widehat K\le1$ and $P_2\le1$, we get $ \widehat
K_\varepsilon\le1$. Hence, by the proved above, there exists a point
process $\mu_\varepsilon$ which has correlation functions
%
\begin{equation}
\label{sar4} k_{\mu_\varepsilon}^{(n)}(x_1,\ldots,x_n)=\varepsilon^n\det
\bigl[K(x_i,x_j) \bigr]_{i,j=1,\ldots,n}.
\end{equation}
Hence, the corresponding correlation measure is $\star$-positive
definite in the sense of~\cite{KK}; see also~\cite{LM}. By taking the
limit as $\varepsilon\to0$, we therefore conclude that the functions
%
\begin{equation}
\label{oie6w} k_\mu^{(n)}(x_1,\ldots,x_n):=\det\bigl[K(x_i,x_j)
\bigr]_{i,j=1,\ldots,n},\qquad n\in\mathbb N,
\end{equation}
determine a $\star$-positive definite correlation measure. By
Proposition~\ref{bvgl}, for each $\Delta\in\mathcal B_0(X)$ and $C>0$,
\[
1+\sum_{n=1}^\infty\frac{C^n}{n!}\int
_{\Delta^n}k_\mu^{(n)}(x_1,\ldots,x_n) m(dx_1)\cdots m(dx_n)=\Det
\bigl(1+CK^\Delta\bigr)<\infty.
\]
Hence, by~\cite{LM}, Corollary 1, we conclude that there exists a
unique probability measure $\mu$ on $(\Gamma,\mathcal B(\Gamma))$
which has correlation functions (\ref{oie6w}). By Proposition \ref
{bvgl} and formula (\ref{gsts}), the Bogoliubov functional of $\mu$
is given by (\ref{jcy}).
\end{pf}

The following corollary easily follows from Theorem~\ref{jkfds5t} and
Proposition~5.1 in~\cite{BO4} and its proof.

\begin{corollary}\label{ghydrd}
Let $G\dvtx  L^2(X_1,m)\to L^2(X_2,m)$ be a bounded linear operator such
that, for any $\Delta_1\in\mathcal B_0(X_1)$ and
$\Delta_2\in\mathcal B_0(X_2)$, the operators $GP^{\Delta_1}$ and
$P^{\Delta_2}G$ are Hilbert--Schmidt.
Let an operator $L\in\mathscr L(L^2(X,m))$ be defined by
\[
L:=\lleft[ %
\matrix{0&G
\cr
-G^*&0 } %
\rright].
\]
Then operator $1+L$ is invertible, and we set $K:=L(1+L)^{-1}$. We
further have the following:

\begin{longlist}
\item
The operator $K$ is $J$-self-adjoint.

\item The operator $K$ is locally trace-class on $X_1$ and $X_2$.

\item The operator $\widehat K$ is the orthogonal projection of
$L^2(X,m)$ onto the subspace
\[
\bigl\{h\oplus Gh\mid h\in L^2(X_1,m) \bigr\}.
\]

Thus, by Theorem~\ref{jkfds5t}, there exists a unique determiminantal
point process with correlation kernel $K(x,y)$.
\end{longlist}
\end{corollary}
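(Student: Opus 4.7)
\vspace{2mm}

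\noindent\textbf{Proof plan.} My plan is to verify the three assertions (i)--(iii) by direct block calculations on $L$ and then to invoke Theorem~\ref{jkfds5t}. The starting point is that $L^*=-L$, by a one-line check on the off-diagonal block form, so $L$ is bounded and skew-adjoint and $\sigma(L)\subset i\mathbb R$; in particular $-1\notin\sigma(L)$, so $1+L$ is invertible. Equivalently, $(1+L)(1-L)=1-L^2$ is block-diagonal with blocks $1+G^*G\ge 1$ on $H_1$ and $1+GG^*\ge 1$ on $H_2$, which gives the concrete formula $(1+L)^{-1}=(1-L)(1-L^2)^{-1}$ and in turn a closed block form for $K=L(1+L)^{-1}=1-(1+L)^{-1}$ via the intertwining identity $(1+GG^*)G=G(1+G^*G)$.

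For (i) I would note that $L$ is itself $J$-self-adjoint: the off-diagonal shape of $L$ together with $J=P_1-P_2$ gives $JLJ=-L=L^*$ immediately, and since $K$ is a rational function of $L$, both symmetries are inherited, yielding $K^*=JKJ$. For (ii), rewriting the diagonal blocks as $K_{11}=G^*(1+GG^*)^{-1}G$ and $K_{22}=G(1+G^*G)^{-1}G^*$ yields, for each $\Delta_1\in\mathcal B_0(X_1)$,
$$P^{\Delta_1}K_{11}P^{\Delta_1}=(GP^{\Delta_1})^*(1+GG^*)^{-1}(GP^{\Delta_1}),$$
which is a product of a Hilbert--Schmidt operator, a bounded operator, and another Hilbert--Schmidt operator, and hence trace class; the argument on $X_2$ is symmetric and uses $P^{\Delta_2}G\in\mathscr L_2$.

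Part (iii) is the real content of the corollary. Self-adjointness of $\widehat K$ follows from (i) and the observation recorded after \eqref{cydyr}. To show $\widehat K^2=\widehat K$ I would do a direct $2\times 2$ block computation, where the algebra reduces to repeated use of $(1+GG^*)^{-1}G=G(1+G^*G)^{-1}$ and its adjoint. To pin down the range, I would verify on vectors that $\widehat K$ fixes every $h\oplus Gh$ and annihilates every vector in the orthogonal complement of the graph of $G$, again via the same intertwining identities. With (i)--(iii) in hand, $\widehat K$ being an orthogonal projection automatically gives $0\le\widehat K\le 1$, and Theorem~\ref{jkfds5t} then delivers the unique determinantal point process with correlation kernel $K(x,y)$.

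The hard part will be the range identification in (iii): the combinatorics of the $J$-signs entangled with the intertwining of $G$ and $G^*$ through the Cayley-type transform $K=L(1+L)^{-1}$ makes it easy to slip on signs. This is precisely what Proposition~5.1 of \cite{BO4} packages cleanly as a graph projection, which is why the author advertises the corollary as ``easily following'' from that reference.
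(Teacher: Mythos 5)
Your handling of the invertibility of $1+L$, of part i), and of part ii) is correct and in fact more self-contained than the paper's: the paper cites \cite[Section~5]{BO4} for the invertibility of $1+L$, reads i) off the explicit blocks $K_{11}=G^*G(1+G^*G)^{-1}$, $K_{22}=GG^*(1+GG^*)^{-1}$, $K_{21}=G(1+G^*G)^{-1}$, $K_{12}=-(1+G^*G)^{-1}G^*$, and proves ii) by compressing the inequality $0\le G^*G(1+G^*G)^{-1}\le G^*G$ by $P^{\Delta_1}$ (so that $\Tr(P^{\Delta_1}K_{11}P^{\Delta_1})\le\Tr\big((GP^{\Delta_1})^*(GP^{\Delta_1})\big)<\infty$ for a non-negative operator), whereas you use skew-adjointness of $L$ and the sandwich $(GP^{\Delta_1})^*(1+GG^*)^{-1}(GP^{\Delta_1})$; both routes are fine. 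The paper outsources all of iii) to \cite[Proposition~5.1]{BO4}; you propose to reprove it by a direct block computation, and that is where the gap is.

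The one step you defer as ``the hard part''---verifying that $\widehat K$ fixes every $h\oplus Gh$---fails as stated. With the paper's conventions ($L_{21}=G$, $L_{12}=-G^*$, $\widehat K=KP_1+(1-K)P_2$), the blocks of $\widehat K$ are $\widehat K_{11}=G^*G(1+G^*G)^{-1}=1-(1+G^*G)^{-1}$, $\widehat K_{22}=(1+GG^*)^{-1}$, $\widehat K_{21}=G(1+G^*G)^{-1}$, $\widehat K_{12}=(1+G^*G)^{-1}G^*$, whereas the orthogonal projection onto $\{h\oplus Gh\}$ has $(1,1)$-block $(1+G^*G)^{-1}$. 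Already for $X_1=X_2=$ a single point and $G=g\in\mathbb C$ one finds
$$\widehat K(f_1,f_2)=\frac{gf_1+f_2}{1+|g|^2}\,(\bar g,1),$$
a rank-one projection whose range is spanned by $(\bar g,1)$, not by $(1,g)$; in particular $\widehat K(1,g)=\frac{2}{1+|g|^2}(|g|^2,g)\ne(1,g)$ unless $|g|=1$. In general, writing $Vk:=G^*k\oplus k$ one checks $V^*V=1+GG^*$ and $\widehat K=V(V^*V)^{-1}V^*$, so $\widehat K$ \emph{is} an orthogonal projection, but onto $\{G^*k\oplus k\mid k\in L^2(X_2,m)\}$ rather than onto the graph of $G$. (This transposition is arguably already present in the printed statement relative to the paper's sign conventions for $L$ and for $\widehat{\ }\,$, but your plan commits to verifying the literal claim, and that verification will not close.) Since any orthogonal projection satisfies $0\le\widehat K\le1$, your application of Theorem~\ref{jkfds5t} and the existence conclusion are unaffected; what needs correcting is precisely the range identification that you single out as the real content of iii).
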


\begin{remark}
As we mentioned in Section~\ref{vhytd},
the Whittaker kernel~\cite{BO2}, the matrix tail kernel~\cite{O}
and the continuous hypergeometric kernel~\cite{BO3} have their $L$
operators as in Corollary~\ref{ghydrd}, and so their $\widehat K$
operators are orthogonal projections.\vadjust{\goodbreak}
\end{remark}

\begin{pf*}{Proof of Corollary~\ref{ghydrd}}
That the operator $1+L$
is invertible is shown in~\cite{BO4}, Section 5. Statement (iii) is
just~\cite{BO4}, Proposition 5.1. By the proof of Proposition 5.1
\cite{BO4}, the operator $L$ has the following block form:
\begin{eqnarray*}
K_{11}&=&G^*G\bigl(1+G^*G\bigr)^{-1},
\\
K_{22}&=&GG^*\bigl(1+GG^*\bigr)^{-1},
\\
K_{21}&=&G\bigl(1+G^*G\bigr)^{-1},
\\
K_{21}&=&-G^*\bigl(1+G^*G\bigr)^{-1}.
\end{eqnarray*}
Hence, statement (i) obviously follows. So we only need to prove
statement (ii). To this end, we fix any $\Delta_1\in\mathcal
B_0(X_1)$ and
$\Delta_2\in\mathcal B_0(X_2)$.
By the assumption of the corollary,
$P^{\Delta_2}G$ is a Hilbert--Schmidt operator. Therefore,
\[
P^{\Delta_2}K_{21}P^{\Delta_1}=P^{\Delta_2}G\bigl(1+G^*G
\bigr)^{-1}P^{\Delta_1}
\]
is a Hilbert--Schmidt operator, hence so is the operator $ P^{\Delta
_1}K_{12}P^{\Delta_2}$. Again by the assumption of the corollary,
$GP^{\Delta_1}$ is a Hilbert--Schmidt operator,~hence,\looseness=-1
\[
\bigl(GP^{\Delta_1}\bigr)^*\bigl(GP^{\Delta_1}\bigr)=P^{\Delta
_1}G^*GP^{\Delta_1}
\]
is a trace-class operator.
Let $\{e^{(n)}\}_{n\ge1}$ be an orthonormal basis in $ L^2(\Delta
_1,m)$. Then, by the spectral theorem,
\begin{eqnarray*}
\sum_{n\ge1}\bigl(K_{11}
e^{(n)},e^{(n)}\bigr)_{L^2(\Delta_1,m)}&=&\sum
_{n\ge
1}\bigl(G^*G\bigl(1+G^*G\bigr)^{-1}
e^{(n)},e^{(n)}\bigr)_{L^2(\Delta_1,m)}
\\
&\le&\sum_{n\ge1}\bigl(G^*G e^{(n)},e^{(n)}
\bigr)_{L^2(\Delta_1,m)}<\infty.
\end{eqnarray*}
Therefore, the operator $P^{\Delta_1}K_{11}P^{\Delta_1}$ is
trace-class. Analogously, we may also show that the operator
$P^{\Delta_2}K_{22}P^{\Delta_2}$ is trace-class. Thus, statement (ii)
is proven.
 \end{pf*}

\begin{corollary} \label{dtsw}
Let an operator $K\in\mathscr L (L^2(X,m))$ be $J$-self-adjoint and
locally trace-class on $X_1$ and $X_2$. Let $0\le\widehat K\le1$ and
let $\|K\|=1$. Let $\mu$ be the corresponding determinantal point
process. Assume that $\Delta\in\mathcal B_0(X)$ is such that $\|
K^\Delta\|=1$. Then
\[
\mu_\Delta\bigl(\{\varnothing\}\bigr)=\Det\bigl(1-K^\Delta
\bigr)=0,
\]
that is, the $\mu$ probability of the event that there are no
particles in $\Delta$ is equal to zero.
\end{corollary}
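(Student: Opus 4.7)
The first step of the plan is to identify $\mu_\Delta(\{\varnothing\})$ with $\Det(1-K^\Delta)$. I would apply the Bogoliubov functional formula \eqref{jcy} of Theorem~\ref{jkfds5t} with $\varphi=-\chi_\Delta\in B_0(X)$. Since $\operatorname{sgn}(-\chi_\Delta)\sqrt{|-\chi_\Delta|}=-\chi_\Delta$ and $\sqrt{|-\chi_\Delta|}=\chi_\Delta$, the operator inside the determinant reduces to $-P^\Delta K P^\Delta=-K^\Delta$, giving $B_\mu(-\chi_\Delta)=\Det(1-K^\Delta)$. On the other hand, the definition \eqref{vyds} yields $B_\mu(-\chi_\Delta)=\int_\Gamma \mathbf{1}_{\{\gamma\cap\Delta=\varnothing\}}\,\mu(d\gamma)=\mu_\Delta(\{\varnothing\})$. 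Hence the stated identity is established and the corollary reduces to proving $\Det(1-K^\Delta)=0$.

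The main step is to construct a unit vector $\xi\in L^2(\Delta,m)$ with $K^\Delta\xi=\xi$. A computation identical to \eqref{gf64w5q6} shows $\widehat{K^\Delta}=P^\Delta\widehat K P^\Delta$, so $\widehat{K^\Delta}$ is self-adjoint with $0\le\widehat{K^\Delta}\le 1$. Applying Proposition~\ref{ctehswt} to $K^\Delta$ on $L^2(\Delta,m)$, the assumption $\|K^\Delta\|=1$ becomes $\|(K^\Delta)_{\mathrm{even}}\|=\max(\|K^{\Delta_1}\|,\|K^{\Delta_2}\|)=1$. Each $K^{\Delta_i}$ is trace-class, compact, non-negative with spectrum in $[0,1]$, so whichever block has norm $1$ admits $1$ as an actual eigenvalue with a unit eigenvector in $L^2(\Delta_i,m)$. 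In the case $K^{\Delta_1}v=v$, take $\xi:=v\oplus 0$; then $(\widehat{K^\Delta}\xi,\xi)=(K^{\Delta_1}v,v)=1=\|\xi\|^2$, and for a self-adjoint operator satisfying $0\le\widehat{K^\Delta}\le 1$ this forces $\widehat{K^\Delta}\xi=\xi$ (via $((1-\widehat{K^\Delta})^{1/2}\xi,(1-\widehat{K^\Delta})^{1/2}\xi)=0$). Using the involutive formula $K^\Delta=\widehat{K^\Delta}P^{\Delta_1}+(1-\widehat{K^\Delta})P^{\Delta_2}$ together with $P^{\Delta_1}\xi=\xi$, we get $K^\Delta\xi=\widehat{K^\Delta}\xi=\xi$. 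In the symmetric case $K^{\Delta_2}w=w$, take $\xi:=0\oplus w$; then $(\widehat{K^\Delta}\xi,\xi)=((1-K^{\Delta_2})w,w)=0$, and $\widehat{K^\Delta}\ge 0$ forces $\widehat{K^\Delta}\xi=0$, whence $K^\Delta\xi=(1-\widehat{K^\Delta})\xi=\xi$.

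With the eigenvector $\xi$ at hand, apply the extended determinant formula \eqref{kfde5b} with $A=-K^\Delta$:
\[
\Det(1-K^\Delta)=\Det\bigl((1-K^\Delta)e^{K^\Delta}\bigr)\cdot e^{-\Tr(K^\Delta)_{\mathrm{even}}}.
\]
Since $K^\Delta\in\mathscr L_{1|2}(L^2(X,m))\subset\mathscr L_2(L^2(X,m))$, the remark after Proposition~\ref{yrsdts} gives $(1-K^\Delta)e^{K^\Delta}-1\in\mathscr L_1(L^2(X,m))$, so the first factor is a classical Fredholm determinant. The eigenvector relation $K^\Delta\xi=\xi$ yields $(1-K^\Delta)e^{K^\Delta}\xi=e\,(1-K^\Delta)\xi=0$, so $-1$ is an eigenvalue of the trace-class operator $(1-K^\Delta)e^{K^\Delta}-1$; consequently its classical Fredholm determinant vanishes, and hence $\Det(1-K^\Delta)=0$. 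The main obstacle is the middle step: turning the norm equality $\|K^\Delta\|=1$ for the non-self-adjoint operator $K^\Delta$ into a genuine eigenvector of $K^\Delta$. Proposition~\ref{ctehswt} does the heavy lifting by transferring spectral information from $K^\Delta$ to its self-adjoint auxiliary $\widehat{K^\Delta}$ via the even part, after which the extremal-value behaviour of quadratic forms of operators bounded between $0$ and $1$ supplies the required eigenvector.
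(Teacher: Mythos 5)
Your proof is correct, and while its opening coincides with the paper's, its closing argument is genuinely different. Both proofs first identify $\mu_\Delta(\{\varnothing\})$ with $\Det(1-K^\Delta)$ via the Bogoliubov functional (you substitute $\varphi=-\chi_\Delta$ directly; the paper takes $\varphi=(e^{-z}-1)\chi_\Delta$ and lets $z\to\infty$ -- both are legitimate since $-\chi_\Delta\in B_0(X)$), and both invoke Proposition~\ref{ctehswt} to conclude that one of the compact non-negative blocks $K^{\Delta_1}$, $K^{\Delta_2}$ has norm $1$ and hence eigenvalue $1$. From there the paper argues probabilistically: it observes that $\Det(1-K^{\Delta_1})=0$ as a classical Fredholm determinant, reads this as $\mu_{\Delta_1}(\{\varnothing\})=0$ for the restricted (Hermitian) determinantal process on $X_1$, and concludes by the inclusion of events $\{\gamma\cap\Delta=\varnothing\}\subset\{\gamma\cap\Delta_1=\varnothing\}$. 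You instead stay entirely on the operator side: you lift the block eigenvector to a genuine eigenvector $\xi$ of the full non-self-adjoint operator $K^\Delta$ with eigenvalue $1$ (the passage through $0\le\widehat{K^\Delta}\le 1$ and the identity $K^\Delta=\widehat{K^\Delta}P^{\Delta_1}+(1-\widehat{K^\Delta})P^{\Delta_2}$ is carried out correctly in both cases), and then kill the extended determinant via \eqref{kfde5b}, since $(1-K^\Delta)e^{K^\Delta}$ annihilates $\xi$ and a classical Fredholm determinant of a non-invertible operator of the form $1+(\text{trace class})$ vanishes. The paper's route is shorter because it can lean on the already-established probabilistic structure of the restricted process; yours is self-contained at the level of operators, avoids the monotonicity-of-events step, and yields the slightly sharper fact that $1-K^\Delta$ fails to be injective. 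Both are valid proofs of the corollary.
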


\begin{pf} By (\ref{jcy}), for each $\Delta\in\mathcal B_0(X)$
and $z>0$,
\begin{eqnarray*}
\int_\Gamma e^{-z|\gamma\cap\Delta|} \mu(d\gamma)&=&\int
_\Gamma\prod_{x\in\gamma}\bigl(1+
\bigl(e^{-z}-1\bigr)\chi_\Delta\bigr) \mu(d\gamma)
\\
&=&\Det\bigl(1-\bigl(1-e^{-z}\bigr)K^\Delta\bigr).\vadjust{\goodbreak}
\end{eqnarray*}
Letting $z\to\infty$ and using the dominated convergence theorem, we get
\[
\mu_\Delta\bigl(\{\varnothing\}\bigr)=\Det\bigl(1-K^\Delta
\bigr).
\]
Since $\|K^\Delta\|=1$, by Proposition~\ref{ctehswt}, at least one of
the operators $K^{\Delta_1}=K^{\Delta_1}_{11}$, $K^{\Delta
_2}=K^{\Delta_2}_{22}$ must have norm 1. (Here, as above, $\Delta
_i=\Delta\cap X_i$, $i=1,2$.) Assume $\|K^{\Delta_1}\|=1$ (the other
case is analogous). As $\Det(1-K^{\Delta_1})$ is a classical Fredholm
determinant and the operator $K^{\Delta_1}$ is self-adjoint, we get
$\Det(1-K^{\Delta_1})=0$. Thus, we have $\mu_{\Delta_1}(\{
\varnothing\})=0$, that is, the $\mu$ probability of the event that
there are no particles in the set $\Delta_1$ is equal to 0. From here
the statement follows.
\end{pf}

\begin{remark}
Note that, for a determinantal point process $\mu$ with a
$J$-self-adjoint correlation operator $K$, the restriction of $\mu$ to
the $\sigma$-algebra $\mathcal B_{X_i}(\Gamma)$ ($i=1,2$) may be
identified with the determinantal point process on $X_i$ whose
correlation operator is the self-adjoint operator $K_{ii}$.
\end{remark}

We will now show that the conditions on a $J$-self-adjoint operator $K$
in Theorem~\ref{jkfds5t} are, in fact, necessary for a determinantal
point process with correlation kernel $K(x,y)$ to exist.

\begin{theorem}\label{hts}
Let $K\in\mathscr L(L^2(X,m))$ be $J$-self-adjoint, let
$K_{11}\ge0$,\break
$K_{22}\ge0$, and let $K^\Delta\in\mathscr L_{1|2}(L^2(X,m))$ for
each $\Delta\in\mathcal B_0(X)$.
Let an integral kernel $K(x,y)$ of the operator $K$ be chosen so that
statements \textup{(i)--(iii)} of Proposition~\ref{hcdrt} are satisfied.
Then there exists a unique point process $\mu$ on $X$ which has
correlation functions
(\ref{uydr}) if and only if $0\le\widehat K\le1$.
\end{theorem}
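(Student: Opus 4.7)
Since Theorem~\ref{jkfds5t} gives the sufficient direction, I focus on necessity: assuming the determinantal point process $\mu$ with correlation kernel $K(x,y)$ exists, derive $0\le \widehat K\le 1$.

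\textbf{Reductions.} First I would reduce to the case $\|K\|<1$ by thinning. The independent $t$-thinning of $\mu$, $t\in(0,1)$, is determinantal with kernel $tK$ (its $n$-th correlation function is $t^n\det[K(x_i,x_j)]=\det[(tK)(x_i,x_j)]$), and all the standing hypotheses transfer to $tK$. Once $0\le\widehat{tK}\le 1$ is proved, the identity $\widehat{tK}=t\widehat K+(1-t)P_2$ and $t\to 1$ yield $0\le\widehat K\le 1$. Second, I would localize: by~(\ref{gf64w5q6}), $P^\Delta\widehat K P^\Delta=\widehat{K^\Delta}$, and $P^\Delta\to I$ strongly along an exhaustion of $X$ by compacts, so it suffices to prove $0\le\widehat{K^\Delta}\le 1$ for each fixed compact~$\Delta$.

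\textbf{Local densities and positivity of the blocks of $L^\Delta$.} Fix such $\Delta$. Since $\|K^\Delta\|<1$ and $K^{\Delta_1}\ge 0$, Proposition~\ref{lkfdsa} yields $\Det(1-K^\Delta)>0$, and the operator $L^\Delta:=K^\Delta(1-K^\Delta)^{-1}$ lies in $\mathscr L_{1|2}(L^2(\Delta,m))$ and is $J$-self-adjoint. Combining Proposition~\ref{bvgl}, the algebraic identity $(1-K^\Delta)(1+L^\Delta(1+\varphi))=1+K^\Delta\varphi$, and \cite[Corollary~A.3]{BOO} gives
$$B_{\mu_\Delta}(\varphi)=\Det(1-K^\Delta)\cdot\Det\bigl(1+L^\Delta(1+\varphi)\bigr),\quad \varphi\in B(\Delta).$$
Expanding via Proposition~\ref{fda} in powers of $1+\varphi$ and matching against the local-density representation of $B_{\mu_\Delta}$ identifies
$$d_{\mu_\Delta}^{(n)}(x_1,\dots,x_n)=\Det(1-K^\Delta)\,\det[L^\Delta(x_i,x_j)]_{i,j=1}^n.$$
Nonnegativity of $\mu_\Delta$ combined with $\Det(1-K^\Delta)>0$ forces $\det[L^\Delta(x_i,x_j)]\ge 0$ a.e.\ for every $n$. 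Applied to every subtuple, every principal minor is a.e.\ nonnegative, so for a.e.\ $(x_1,\dots,x_n)\in\Delta_i^n$ the Hermitian matrix $[L^\Delta(x_i,x_j)]$ is positive semidefinite; a Lebesgue-density / Riemann-sum approximation against step functions, exploiting local trace-classness of $L^\Delta_{ii}$, upgrades this to the operator inequalities $L^\Delta_{11}\ge 0$ and $L^\Delta_{22}\ge 0$.

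\textbf{Discretization and Proposition~\ref{ydst}.} Let $P^{(k)}=P_1^{(k)}+P_2^{(k)}$ be projections onto step-function subspaces for increasingly fine measurable partitions of $\Delta_1$ and $\Delta_2$, identifying $P^{(k)}L^2(\Delta,m)$ with $L^2$ on a finite discrete index set $X^{(k)}=X_1^{(k)}\sqcup X_2^{(k)}$. The compression $\tilde L^{(k)}:=P^{(k)}L^\Delta P^{(k)}$ is a $J$-Hermitian matrix whose diagonal blocks $\tilde L^{(k)}_{ii}=P_i^{(k)}L^\Delta_{ii}P_i^{(k)}\ge 0$; by Proposition~\ref{huydsd}, every principal minor of $\tilde L^{(k)}$ is nonnegative and $\det(1+\tilde L^{(k)})>0$. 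Setting $\tilde K^{(k)}:=\tilde L^{(k)}(1+\tilde L^{(k)})^{-1}$, the formula
$$\rho^{(k)}(\{i_1,\dots,i_m\}):=\Det(1-\tilde K^{(k)})\,\det[\tilde L^{(k)}(i_u,i_v)]_{u,v=1}^m$$
defines a nonnegative measure summing (via $(1-\tilde K^{(k)})(1+\tilde L^{(k)})=1$) to $\Det(1-\tilde K^{(k)})\Det(1+\tilde L^{(k)})=1$, hence a probability measure on $\Gamma_{X^{(k)}}$. A direct Bogoliubov-functional computation (using the identity $(1-\tilde K^{(k)})(1+\tilde L^{(k)}(1+\varphi))=1+\tilde K^{(k)}\varphi$) identifies $\rho^{(k)}$ as the discrete determinantal point process with correlation kernel $\tilde K^{(k)}$; by Proposition~\ref{ydst}, $0\le\widehat{\tilde K^{(k)}}\le 1$. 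Strong convergence $\tilde L^{(k)}\to L^\Delta$ together with uniform norm control yields $\tilde K^{(k)}\to K^\Delta$ strongly, hence $\widehat{\tilde K^{(k)}}\to\widehat{K^\Delta}$ strongly, giving $0\le\widehat{K^\Delta}\le 1$.

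\textbf{Main obstacle.} The most delicate step is the passage, in paragraph two, from pointwise a.e.\ positive semidefiniteness of the matrices $[L^\Delta(x_i,x_j)]$ to the operator inequalities $L^\Delta_{ii}\ge 0$. In the discrete setting of Proposition~\ref{ydst} this is Sylvester's criterion applied trivially, but in the continuum I would need a careful Lebesgue-point / Riemann-sum argument exploiting local trace-classness of the Hermitian kernels $L^\Delta_{ii}(x,y)$ to realize $\langle L^\Delta_{ii}f,f\rangle$ as an a.e.\ limit of nonnegative quadratic forms. A secondary technical concern is the strong-limit identification $\tilde K^{(k)}\to K^\Delta$: uniform boundedness of $(1+\tilde L^{(k)})^{-1}$ must be extracted from the nonnegativity of the blocks $\tilde L^{(k)}_{ii}$ and Proposition~\ref{huydsd}, which is what makes the $J$-self-adjoint setting strictly harder than the classical Hermitian Macchi--Soshnikov framework.
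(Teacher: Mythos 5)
Your reduction to $\|K\|<1$ by thinning has a genuine gap: the case $\|K\|>1$ is never excluded. For a fixed $t\in(0,1)$ the machinery you invoke afterwards (invertibility of $1-K^\Delta$, the operator $L^\Delta$) requires $\|tK\|<1$, i.e.\ $t<\|K\|^{-1}$. If $\|K\|\le 1$ this covers all $t<1$ and the limit $t\to1$ is legitimate; but nothing in the hypotheses rules out $\|K\|>1$ a priori, and in that case you only obtain $0\le s\widehat K+(1-s)P_2\le 1$ for $s\le\|K\|^{-1}<1$, which yields neither $0\le\widehat K\le1$ nor a contradiction. The paper devotes a separate step (step 10 of its proof) to exactly this: if $\|K\|>1$, pick a compact $\Delta$ with $\|K^\Delta\|>1$, thin with $\varepsilon=\|K^\Delta\|^{-1}$ so that $\|K^\Delta_\varepsilon\|=1$, deduce $\Det(1-K^\Delta_\varepsilon)=0$ from Corollary~\ref{dtsw}, and contradict $\int_\Gamma(1-\varepsilon)^{|\gamma\cap\Delta|}\,\mu(d\gamma)>0$. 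You need this (or an equivalent) argument before your limit $t\to1$ is available.

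The core of your argument is genuinely different from the paper's, and your self-identified ``main obstacle'' is precisely the point the paper routes around. The paper never proves $L[\Delta]_{11}\ge0$ in the necessity direction; it forms the Schur complement $Q[\Delta]_{11}=K^\Delta_{11}-(K^\Delta_{21})^*(1-K^\Delta_{22})^{-1}K^\Delta_{21}$, shows that $\det[Q[\Delta]_{11}(x_i,x_j)]$ are the correlation functions of an auxiliary point process on $\Delta_1$, and---because $Q[\Delta]_{11}$ is a difference of two nonnegative trace-class operators---equips it with a Gram-difference kernel $(\eta(x,\cdot),\eta(y,\cdot))-(\theta(x,\cdot),\theta(y,\cdot))$ to which Lusin's theorem applies; pointwise positive definiteness on large compacts gives $Q[\Delta]_{11}\ge0$, and a direct quadratic-form estimate yields $\widehat{K^\Delta}\ge0$ (the upper bound by swapping $X_1\leftrightarrow X_2$ and $K\leftrightarrow1-K$). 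Your alternative---establish $L^\Delta_{ii}\ge0$, discretize, invoke Proposition~\ref{ydst}, pass to the strong limit---is viable, and your endgame is sound: once $0\le\widehat{\tilde K^{(k)}}\le1$ is known, Proposition~\ref{kfdscd} gives $\|1-\tilde K^{(k)}\|\le 2$, which is exactly the uniform bound on $(1+\tilde L^{(k)})^{-1}$ you worry about. But the decisive step is only announced, not proved. It can in fact be completed without Lusin: integrating the a.e.\ inequality $\sum_{p,q}\overline{g(x_p)}\,L^\Delta_{ii}(x_p,x_q)\,g(x_q)\ge0$ over $\Delta_i^n$ and using the finiteness and a.e.\ nonnegativity of $\int_{\Delta_i}L^\Delta(x,x)\,m(dx)$ gives $(L^\Delta_{ii}g,g)\ge -\frac{m(\Delta_i)}{n-1}\int_{\Delta_i}|g|^2L^\Delta(x,x)\,m(dx)\to0$ as $n\to\infty$. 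Note, however, that both this argument and the a.e.\ statement it starts from are sensitive to the choice of the diagonal of the kernel, so you must first fix the version of $L^\Delta(x,y)$ as the paper does in step 2 of its proof (so that \eqref{jkgfuyfu} holds); without that normalization the claim that $\det[L^\Delta(x_i,x_j)]\ge0$ a.e.\ is not even well posed.
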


\begin{pf}
We only have to prove that, if a point process $\mu$ exists, then
$0\le\widehat K\le1$. We divide the proof into several steps.

(1) Fix any compact $\Delta\subset X$. By Proposition~\ref{bvgl} and
(\ref{gsts}), the Bogoliubov functional of $\mu$ is given by (\ref{jcy}).
Hence,
analogously to the proof of Corollary~\ref{dtsw}, we get
\[
\mu_\Delta\bigl(\{\varnothing\}\bigr)=\Det\bigl(1-K^\Delta
\bigr).
\]
In particular,
%
\begin{equation}
\label{vydhufty}\Det\bigl(1-K^\Delta\bigr)\ge0.
\end{equation}

(2) From now on we will additionally assume that $\|K\|<1$.
Then \mbox{$\|K^\Delta\|<1$} and we set $L[\Delta]:=K^\Delta(1-K^\Delta
)^{-1}$. Just\vspace*{1pt} as in the proof of Proposition~\ref{fyd6s6yc}, we derive
that $L[\Delta]$ is $J$-self-adjoint and $L[\Delta]\in\mathscr
L_{1|2}(L^2(X,m))$. To choose an integral kernel of the operator
$L[\Delta]$, we represent it in the form
\[
L[\Delta]=K^\Delta+K^\Delta L[\Delta].
\]
As $L[\Delta]\in\mathscr L_2(L^2(\Delta,m))$, we first choose
an arbitrary $J$-Hermitian integral kernel of this operator, which we
denote by $\widetilde L[\Delta](x,y)$.
Now we set
\[
L[\Delta](x,y):=K(x,y)+\int_\Delta K(x,z)\widetilde L[
\Delta](z,y) m(dy),\qquad x,y\in\Delta.
\]
As is easily seen, this integral kernel satisfies
%
\begin{eqnarray}
\label{jkgfuyfu}
&&\Tr\bigl(L[\Delta]_{\mathrm
{even}}^\Lambda\bigr)\nonumber\\[-8pt]\\[-8pt]
&&\qquad=\int
_\Lambda L[\Delta](x,x) m(dx) \qquad\mbox{for each } \Lambda\in\mathcal
B_0(X), \Lambda\subset\Delta.\nonumber
\end{eqnarray}

(3) By (\ref{jkgfuyfu}),
%
\begin{equation}
\label{cfysdtrd}\Tr\bigl(\bigl(L[\Delta](1+\varphi)\bigr)_{\mathrm{even}}
\bigr)=\int_\Delta L[\Delta](x,x) \bigl(1+\varphi(x)\bigr),\qquad
\varphi\in B_0(X).
\end{equation}
Since formula (\ref{setfse}) clearly holds for the Boliubov functional
of $\mu_\Delta$, using (\ref{cfysdtrd}) and Proposition~\ref{fda},
we get, for each $\varphi\in B(\Delta)$,
\begin{eqnarray*}
B_{\mu_\Delta}(\varphi)&=&\Det\bigl(1-K^\Delta\bigr) \\
&&{}\times\Biggl(1+\sum
_{n=1}^\infty\int_{\Delta^n}
\bigl(1+\varphi(x_1)\bigr)\cdots\bigl(1+\varphi(x_n)
\bigr)
\\
&&\hspace*{69.2pt}{}\times\det\bigl[L[\Delta](x_i,x_j)
\bigr]_{i,j=1,\ldots,n} m(dx_1)\cdots m(dx_n) \Biggr).
\end{eqnarray*}
This implies that the measure $\mu_\Delta$ has densities (\ref
{ufdrt6sw}). Hence, by (\ref{vydhufty}), for
$m^{\otimes n}$-a.a. $(x_1,\ldots,x_n)\in\Delta^n$,
\[
\det\bigl[L[\Delta](x_i,x_j) \bigr]_{i,j=1,\ldots,n}
\ge0.
\]
In particular, for $i=1,2$, for
$m^{\otimes n}$-a.a. $(x_1,\ldots,x_n)\in\Delta_i^n$,
%
\begin{equation}
\label{bfytdy7} \det\bigl[L[\Delta]_{ii}(x_i,x_j)
\bigr]_{i,j=1,\ldots,n}\ge0.
\end{equation}
Here $\Delta_i:=\Delta\cap X_i$, $i=1,2$.

(4) Following~\cite{O}, Proposition 1.5, let us find a representation
of $L[\Delta]_{11}$ in terms of the blocks of the operator $K^\Delta
$. Since $L[\Delta](1-K^\Delta)=K^\Delta$, we have
%
\begin{eqnarray}
\label{dtrs}
L[\Delta]_{11}\bigl(1-K^\Delta_{11}\bigr)-L[
\Delta]_{12}K^\Delta_{21}&=&K^\Delta_{11},
\\
\label{fdts}
-L[\Delta]_{11}K^\Delta_{12}+ L[
\Delta]_{12}\bigl(1-K^\Delta_{22}\bigr)&=&K^\Delta_{12}.
\end{eqnarray}
From (\ref{fdts}),
\[
-L[\Delta]_{11}K^\Delta_{12}\bigl(1-K^\Delta_{22}
\bigr)^{-1}+ L[\Delta]_{12}=K^\Delta_{12}
\bigl(1-K^\Delta_{22}\bigr)^{-1},
\]
hence,
\[
-L[\Delta]_{11}K^\Delta_{12}\bigl(1-K^\Delta_{22}
\bigr)^{-1}K^\Delta_{21}+ L[\Delta]_{12}K^\Delta_{21}=K^\Delta_{12}
\bigl(1-K^\Delta_{22}\bigr)^{-1}K^\Delta_{21}.\vadjust{\goodbreak}
\]
Adding this to (\ref{dtrs}) yields
\[
L[\Delta]_{11}\bigl(1-Q[\Delta]_{11}\bigr)=Q[
\Delta]_{11},
\]
where
%
\begin{eqnarray}\label{vgftyd}
Q[\Delta]_{11}:\!&=& K^\Delta_{11}+K^\Delta_{12}
\bigl(1-K_{22}^\Delta\bigr)^{-1}K^\Delta_{21}
\nonumber\\[-8pt]\\[-8pt]
&=&K^\Delta_{11}-\bigl(K^\Delta_{21}\bigr)^*
\bigl(1-K_{22}^\Delta\bigr)^{-1}K^\Delta_{21}.\nonumber
\end{eqnarray}
Since the operator $1-K^\Delta_{11}$ is strictly positive and the
operator $(K^\Delta_{21})^*(1-K_{22}^\Delta)^{-1}K^\Delta_{21}$ is
nonnegative, the operator $1-Q[\Delta]_{11}$ is strictly positive,
hence invertible. Therefore,
%
\begin{equation}
\label{cydyrjzxfzgg} L[\Delta]_{11}=Q[\Delta]_{11}\bigl(1-Q[
\Delta]_{11}\bigr)^{-1}.
\end{equation}

(5) By (\ref{vgftyd}), the operator $Q[\Delta]_{11}$ is self-adjoint
and trace-class. Since the operator $1-Q[\Delta]_{11}$ is strictly
positive, we therefore get
\[
\Det\bigl(1-Q[\Delta]_{11}\bigr)>0.
\]
[Note that $\Det(1-Q[\Delta]_{11})$ is a usual Fredholm determinant.]
Therefore, by~(\ref{bfytdy7}), we can define a nonnegative, finite
measure $\nu[\Delta_1]$ on $(\Gamma_{\Delta_1},\mathcal B(\Gamma_{\Delta
_1}))$ whose local densities are
%
\begin{eqnarray}\label{ufJHXFGUaf}
d_{\nu[\Delta_1]}^{(0)}&=&\Det\bigl(1-Q[\Delta]_{11}\bigr),
\nonumber\\
d_{\nu[\Delta_1]}^{(n)}(x_1,\ldots,x_n)&=&\Det
\bigl(1-Q[\Delta]_{11}\bigr)\det\bigl[L[\Delta]_{11}(x_i,x_j)
\bigr]_{i,j=1,\ldots,n},\\
&&\eqntext{n\in\mathbb N.}
\end{eqnarray}
Analogously to (\ref{ufdrt6sw})--(\ref{trsa5}), we conclude from
(\ref{ufJHXFGUaf}) that the Bogoliubov transform of the measure $\nu
[\Delta_1]$ is given by
%
\begin{eqnarray}\label{jftydygcx}
B_{\nu[\Delta_1]}(\varphi)&=&\int_{\Gamma_{\Delta_1}}\prod
_{x\in
\gamma}\bigl(1+\varphi(x)\bigr) \nu[\Delta_1](d
\gamma)
\nonumber\\[-8pt]\\[-8pt]
&=&\Det\bigl(1+\operatorname{sgn}(\varphi)\sqrt{|\varphi|} Q[
\Delta]_{11}\sqrt{|\varphi|} \bigr),\qquad \varphi\in B_0(
\Delta_1).\nonumber
\end{eqnarray}
Setting $\varphi\equiv0$, we see that $\nu[\Delta_1](\Gamma_{\Delta
_1})=1$, that is, $\nu[\Delta_1]$ is a point process in
$\Delta_1$.

(6) We can now choose an integral kernel of the operator $Q[\Delta
]_{11}$ analogously to~\cite{GY}, Lemma A.3, and~\cite{LM}, Section 3.
Indeed, since $K_{21}^\Delta$ is a Hilbert--Schmidt operator,
$(K^\Delta_{21})^*(1-K_{22}^\Delta)^{-1}K^\Delta_{21}$ is a
nonnegative trace-class operator in $L^2(\Delta_1,m)$. The operator
$ ((K^\Delta_{21})^*(1-K_{22}^\Delta)^{-1}K^\Delta_{21}
)^{1/2}$ is Hilbert--Schmidt, hence an integral operator. We choose its
integral kernel, denoted by $\theta(x,y)$, so that
\begin{eqnarray*}
\theta(x,y)&=&\overline{\theta(y,x)} \qquad\mbox{for all
$x,y\in\Delta_1$},\\
\theta(x,\cdot)&\in& L^2(\Delta_1,m) \qquad\mbox{for all $x\in
\Delta_1$}.
\end{eqnarray*}
[Recall that $\int_{\Delta_1^2}|\theta(x,y)|^2 m(dx) m(dy)=
\| ((K^\Delta_{21})^*(1-K_{22}^\Delta)^{-1}K^\Delta_{21} )^{1/2} \|
_2^2<\infty$.] Now, we set an integral kernel
of the operator $(K^\Delta_{21})^*(1-K_{22}^\Delta)^{-1}K^\Delta_{21}$
to be
\begin{eqnarray*}
\bigl(K^\Delta_{21}\bigr)^*\bigl(1-K_{22}^\Delta
\bigr)^{-1}K^\Delta_{21}(x,y):\!&=&\int
_{\Delta_1}\theta(x,z)\theta(z,y) m(dz)
\\
&=&\bigl(\theta(x,\cdot),\theta(y,\cdot)\bigr)_{L^2(\Delta_1,m)},\qquad x,y\in
\Delta_1.
\end{eqnarray*}
We similarly construct an integral kernel of the operator
$K_{11}^\Delta$:
\[
K_{11}^\Delta(x,y)=\bigl(\eta(x,\cdot),\eta(y,\cdot)
\bigr)_{L^2(\Delta
_1,m)},\qquad x,y\in\Delta_1.
\]
Hence, by virtue of (\ref{vgftyd}), we may choose an integral kernel
of the operator $Q[\Delta]_{11}$ as follows:
%
\begin{equation}
\label{fxtdst}\quad Q[\Delta]_{11} (x,y)= \bigl(\eta(x,\cdot),\eta(y,\cdot)
\bigr)_{L^2(\Delta_1,m)}- \bigl(\theta(x,\cdot),\theta(y,\cdot)
\bigr)_{L^2(\Delta_1,m)}.
\end{equation}
As is easily seen, for each $\Delta\in\mathcal B_0(X)$, $\Lambda\subset
\Delta_1$,
\[
\Tr\bigl(Q[\Delta]_{11}^\Lambda\bigr)=\int_\Lambda
Q[\Delta]_{11}(x,x) m(dx).
\]

Now, analogously to Proposition~\ref{bvgl}, we get from (\ref{jftydygcx})
\begin{eqnarray*}
B_{\nu[\Delta_1]}(\varphi)&=& 1+\sum_{n=1}^\infty
\frac1{n!}\int_{\Delta_1^n}\varphi(x_1)\cdots
\varphi(x_n)\\
&&\hspace*{63pt}{}\times\operatorname{det} \bigl[Q[\Delta]_{11}(x_i,x_j)
\bigr]_{i,j=1,\ldots,n} m(dx_1)\cdots m(dx_n)
\end{eqnarray*}
for each $\varphi\in B_0(\Delta_1)$. Hence, the correlation functions
of the point process $\nu[\Delta_1]$ are
\[
k_{\nu[\Delta_1]}^{(n)}(x_1,\ldots,x_n)=
\operatorname{det} \bigl[Q[\Delta]_{11}(x_i,x_j)
\bigr]_{i,j=1,\ldots,n},\qquad n\in\mathbb N.
\]
Therefore, for each $n\in\mathbb N$,
%
\begin{equation}
\label{ghfdtydy}\quad \operatorname{det} \bigl[Q[\Delta]_{11}(x_i,x_j)
\bigr]_{i,j=1,\ldots,n}\ge0 \qquad\mbox{for $m^{\otimes n}$-a.a. $(x_1,\ldots,x_n)\in\Delta_1^n$}.
\end{equation}

(7) Obviously, the following two mappings are measurable:
\[
\Delta_1\ni x\mapsto\eta(x,\cdot)\in L^2(
\Delta_1,m),\qquad \Delta_1\ni x\mapsto\theta(x,\cdot)\in
L^2(\Delta_1,m).
\]
Therefore, by Lusin's theorem (see, e.g.,~\cite{Schwartz}), for each
$\varepsilon>0$, there exists a compact set $\Lambda_\varepsilon
\subset\Delta_1$
such that $m(\Delta_1\setminus\Lambda_\varepsilon)\le\varepsilon$
and the mappings
\[
\Lambda_\varepsilon\ni x\mapsto\eta(x,\cdot)\in L^2(
\Delta_1,m), \qquad\Lambda_\varepsilon\ni x\mapsto\theta(x,\cdot)\in
L^2(\Delta_1,m)
\]
are continuous. Therefore, by (\ref{fxtdst}), the function
%
\begin{equation}
\label{vgcdtyd}\Lambda_\varepsilon^2\ni(x,y)\mapsto Q[
\Delta]_{11}(x,y)\in\mathbb C
\end{equation}
is continuous. Hence, by (\ref{ghfdtydy}),
\[
\operatorname{det} \bigl[Q[\Delta]_{11}(x_i,x_j)
\bigr]_{i,j=1,\ldots,n}\ge0 \qquad\mbox{for all $(x_1,\ldots,x_n)
\in\Lambda_\varepsilon^n$}.
\]
Thus, the continuous kernel (\ref{vgcdtyd}) is positive definite, and
therefore the operator $Q[\Delta]_{11}$ is nonnegative on
$L^2(\Lambda_\varepsilon,m)$. By letting $\varepsilon\to0$, we
conclude that
$Q[\Delta]_{11}\ge0$ on $L^2(\Delta_1,m)$. Hence, by (\ref{vgftyd}),
%
\begin{equation}
\label{kjftydrk} K^\Delta_{11}\ge\bigl(K^\Delta_{21}
\bigr)^*\bigl(1-K_{22}^\Delta\bigr)^{-1}K^\Delta_{21}
\qquad\mbox{on }L^2(\Delta_1,m).
\end{equation}

(8) We denote by $\widehat{K}^\Delta$
the corresponding transformation of the operator $K^\Delta$
in the Hilbert space $L^2(\Delta,m)=L^2(\Delta_1,m)\oplus L^2(\Delta
_2,m)$. Hence, $\widehat{K}^\Delta=P^\Delta\widehat K P^\Delta$ and
\[
\widehat{K}^\Delta=\lleft[ %
\matrix{
K^\Delta_{11}& K^\Delta_{21}
\vspace*{2pt}\cr
\bigl(K^\Delta_{21}\bigr)^*&1-K^\Delta_{22} }
\rright].
\]
By (\ref{kjftydrk}), for each $f=(f_1,f_2)\in L^2(\Delta,m)$,
%
\begin{eqnarray}\label{dw46}
\bigl(\widehat{K}^\Delta f,f\bigr)&=&\bigl(K_{11}^\Delta
f_1,f_1\bigr)+\bigl(K_{21}^\Delta
f_1,f_2\bigr)\nonumber\\
&&{}+\bigl(\bigl(K_{21}^\Delta
\bigr)^* f_2,f_1\bigr)+\bigl(\bigl(1-K^\Delta_{22}
\bigr)f_2,f_2\bigr)
\nonumber
\\
&\ge&\bigl(\bigl(K^\Delta_{21}\bigr)^*\bigl(1-K_{22}^\Delta
\bigr)^{-1}K^\Delta_{21}f_1,f_1
\bigr)\nonumber\\[-8pt]\\[-8pt]
&&{}+\bigl(\bigl(1-K^\Delta_{22}\bigr)f_2,f_2
\bigr)-2\bigl|\bigl(K_{21}^\Delta f_1,f_2
\bigr)\bigr|
\nonumber
\\
&=&\bigl(\bigl(1-K_{22}^\Delta\bigr)^{-1}K^\Delta_{21}f_1,K^\Delta_{21}f_1
\bigr)\nonumber\\
&&{}+\bigl(\bigl(1-K^\Delta_{22}\bigr)f_2,f_2
\bigr)-2\bigl|\bigl(K_{21}^\Delta f_1,f_2
\bigr)\bigr|.\nonumber
\end{eqnarray}
Since $K_{22}^\Delta$ is a compact self-adjoint operator in
$L^2(\Delta_2,m)$, we can choose an orthnormal basis of $L^2(\Delta
_2,m)$ which consists of eigenvectors of the operator $K_{22}^\Delta$,
and we denote by $\lambda_n$ the eigenvalue belonging to eigenvector
$e_n$, $n\ge1$. Clearly, $\lambda_n<1$ for all $n$. Then, by (\ref{dw46}),
\begin{eqnarray*}
\bigl(\widehat{K}^\Delta f,f\bigr)&\ge&\sum_{n=1}^\infty(1-
\lambda_n)^{-1}\bigl|\bigl(K_{21}^\Delta
f_1,e_n\bigr)\bigr|^2 +\sum
_{n=1}^\infty(1-\lambda_n)\bigl|(f_2,e_n)\bigr|^2
\\
&&{} -\sum_{n=1}^\infty2 \bigl|\bigl(K_{21}^\Delta
f_1,e_n\bigr) (f_2,e_n)\bigr|
\\
&=&\sum_{n=1}^\infty\bigl( (1-
\lambda_n)^{-1/2}\bigl|\bigl(K_{21}^\Delta
f_1,e_n\bigr)\bigr| - (1-\lambda_n)^{1/2}\bigl|(f_2,e_n)\bigr|
\bigr)^2\ge0.
\end{eqnarray*}
Thus, for each compact $\Delta\subset X$, the operator $\widehat
{K}^\Delta=P_\Delta\widehat KP_\Delta$ is nonnegative. Hence,
$\widehat K\ge0$. Exchanging\vspace*{1pt} the role of the sets $X_1$ and $X_2$ and
using instead of the operator $K$ the operator $1-K$, we therefore get
$1-\widehat K\ge0$. Thus, $0\le\widehat K\le1$.

(9) We now assume that $\|K\|=1$. Using the procedure of thinning of
the point process $\mu$ (see, e.g.,~\cite{DVJ}, Example 8.2(a)), we
conclude that, for each $\varepsilon\in(0,1)$, there exists a point
process $\mu_\varepsilon$ which has correlation functions as in
formula (\ref{sar4}), that is, a determinantal point process\vspace*{2pt}
corresponding to the operator $K_\varepsilon:=\varepsilon K$. By the
proved above
$0\le\widehat K_\varepsilon\le1$. Hence, by (\ref{jfd6}),
\[
0\le\varepsilon\widehat K+(1-\varepsilon)P_2\le1.
\]
Letting $\varepsilon\to1$, we get $0\le\widehat K\le1$.

(10) Finally, we assume that $\|K\|>1$ and we have to show that a
determinantal point process does not exist in this case. Assume the
contrary, that is, assume that there exists a determinantal point
process with correlation kernel $K(x,y)$. Since $\|K\|>1$, there exists
a compact set $\Delta\subset X$ such that $\|K^\Delta\|>1$. Analogously
to part (9), using the procedure of thinning, we conclude that, for
each $\varepsilon\in(0,1)$, there exists a determinantal point process
with correlation kernel $K_\varepsilon(x,y):=\varepsilon K(x,y)$. We
choose $\varepsilon:=\|K^\Delta\|^{-1}$, so that
$\|K_\varepsilon^\Delta\|=1$. We take the restriction of the
corresponding probability measure to the $\sigma$-algebra $\mathcal
B_\Delta(\Gamma)$, that is, a point process on $(\Gamma_\Delta,\mathcal
B(\Gamma_\Delta))$. We denote this point process by
$\mu_{\varepsilon,\Delta}$. By part~(9), we have
$0\le\widehat{K}^\Delta_\varepsilon\le1$. Then, by Corollary
\ref{dtsw},
\[
\Det\bigl(1-K^\Delta_\varepsilon\bigr)=0.
\]

Next, following the idea of~\cite{Soshnikov}, Remark 4, we consider
\begin{eqnarray*}
\int_\Gamma(1-\varepsilon)^{|\gamma\cap\Delta|} \mu(d\gamma)&=&\int
_\Gamma\prod_{x\in\gamma}\bigl(1-
\varepsilon\chi_\Delta(x)\bigr) \mu(d\gamma)
\\
&=&\Det\bigl(1-\varepsilon K^\Delta\bigr)=\Det\bigl(1-
K_\varepsilon^\Delta\bigr)=0.
\end{eqnarray*}
On the other hand, $(1-\varepsilon)^{|\gamma\cap\Delta|}>0$ for all
$\gamma\in\Gamma$. Hence,
\[
\int_\Gamma(1-\varepsilon)^{|\gamma\cap\Delta|} \mu(d\gamma)>0,
\]
which is a contradiction.
\end{pf}
%

\section*{Acknowledgments}

I am extremely grateful to Grigori Olshanski for the formulation of
the problem and for many useful discussions and suggestions. I wish
also to thank Marek Bo\.zejko for a discussion on positive definite
kernels. I am grateful to the anonymous referee for several useful
suggestions.



\printaddresses

\end{document}